\numberwithin{equation}{section}
\newtheorem{thm}{Theorem}[section]
\newtheorem{cor}[thm]{Corollary}
\newtheorem{lem}[thm]{Lemma}
\newtheorem{prop}[thm]{Proposition}
 { \theoremstyle{definition}
\newtheorem{df}[thm]{Definition}
\newtheorem{exa}[thm]{Example}
\newtheorem{rmk}[thm]{Remark} }
\DeclareMathOperator{\Ann}{Ann}
\DeclareMathOperator{\rk}{rank}
\newcommand{\exd}{\mathrm{d}}
\newcommand{\D}{\mathrm{D}}
\newcommand{\w}{{\,{\wedge}\;}}
\newcommand{\Leg}{\textrm{Leg}}
\newcommand{\Ker}{\operatorname{Ker}}
\newcommand{\vl}{\, \vline\, }
\DeclareFontFamily{U}{MnSymbolC}{}
\DeclareSymbolFont{MnSyC}{U}{MnSymbolC}{m}{n}
\DeclareFontShape{U}{MnSymbolC}{m}{n}{
 <-6> MnSymbolC5
 <6-7> MnSymbolC6
 <7-8> MnSymbolC7
 <8-9> MnSymbolC8
 <9-10> MnSymbolC9
 <10-12> MnSymbolC10
 <12-> MnSymbolC12}{}
\DeclareMathSymbol{\im}{\mathbin}{MnSyC}{'270}
\newcommand{\n}{{n}}
\newcommand{\ve}{\varepsilon}
\newcommand{\half}{\textstyle{\frac{1}{2}}}
\newcommand{\quar}{\textstyle{\frac{1}{4}}}
\newcommand{\third}{\textstyle{\frac{1}{3}}}
\newcommand{\nfrac}{\textstyle{\frac{1}{n-1}}}
\newcommand{\sqr}{{\textstyle{\frac{1}{\sqrt{2}}}}}
\newcommand{\fomegaone}{\textstyle{\frac{\partial}{\partial\omega^1}}}
\newcommand{\fomegaz}{\textstyle{\frac{\partial}{\partial\omega^0}}}
\newcommand{\fomegann}{\textstyle{\frac{\partial}{\partial\omega^{n}}}}
\newcommand{\Itot}{\mathcal{I}_{\mathsf{tot}}}
\newcommand{\Ibas}{\mathcal{I}_{\mathsf{bas}}}
\newcommand{\Ichar}{\mathcal{I}_{\mathsf{char}}}
\newcommand{\ts}{\textstyle}
\newcommand{\ra}{\rightarrow}
\newcommand{\lra}{\longrightarrow}
\newcommand{\lmt}{\longmapsto}
\newcommand{\bc}{{\underline{c}}}
\newcommand{\bd}{{\underline{d}}}
\newcommand{\una}{{\underline{a}}}
\newcommand{\unb}{{\underline{b}}}
\newcommand{\unc}{{\underline{c}}}
\newcommand{\und}{{\underline{d}}}
\newcommand{\uni}{{\underline{i}}}
\newcommand{\wh}{\widehat}
\newcommand{\wsf}{\textsc{Wsf }}
\newcommand{\ssf}{\textsc{ssf }}
\newcommand{\orth}{\mathfrak{o}}
\newcommand{\fo}{\mathfrak{o}}
\newcommand{\fg}{\mathfrak{g}}
\newcommand{\fp}{\mathfrak{p}}
\newcommand{\fm}{\mathfrak{m}}
\newcommand{\fh}{\mathfrak{h}}
\newcommand{\sU}{\mathscr{U}}
\newcommand{\Orth}{\mathrm{O}}
\newcommand{\sN}{\mathsf{N}}
\newcommand{\bfA}{\mathbf{A}}
\newcommand{\bfB}{\mathbf{B}}
\newcommand{\bfC}{\mathbf{C}}
\newcommand{\bfE}{\mathbf{E}}
\newcommand{\bfF}{\mathbf{F}}
\newcommand{\bfM}{\mathbf{M}}
\newcommand{\bfR}{\mathbf{R}}
\newcommand{\bfS}{\mathbf{S}}
\newcommand{\bfa}{\mathbf{a}}
\newcommand{\bfb}{\mathbf{b}}
\newcommand{\bfc}{\mathbf{c}}
\newcommand{\bfd}{\mathbf{d}}
\newcommand{\bfe}{\mathbf{e}}
\newcommand{\bff}{\mathbf{f}}
\newcommand{\bfi}{\mathbf{i}}
\newcommand{\bfj}{\mathbf{j}}
\newcommand{\bfu}{\mathbf{u}}
\newcommand{\bfv}{\mathbf{v}}
\newcommand{\RR}{\mathbb{R}}
\newcommand{\FF}{\mathbb{F}}
\newcommand{\ZZ}{\mathbb{Z}}
\newcommand{\NN}{\mathbb{N}}
\newcommand{\PP}{\mathbb{P}}
\newcommand{\II}{\mathbb{I}}
\newcommand{\VV}{\mathbb{V}}
\newcommand{\cA}{\mathcal{A}}
\newcommand{\cC}{\mathcal{C}}
\newcommand{\cD}{\mathcal{D}}
\newcommand{\cF}{\mathcal{F}}
\newcommand{\cI}{\mathcal{I}}
\newcommand{\cL}{\mathcal{L}}
\newcommand{\cN}{\mathcal{N}}
\newcommand{\cE}{\mathcal{E}}
\newcommand{\cP}{\mathcal{P}}
\newcommand{\cS}{\mathcal{S}}
\newcommand{\cV}{\mathcal{V}}
\DeclareMathOperator{\GL}{GL}
\begin{document}
\allowdisplaybreaks

\newcommand{\arXivNumber}{1704.02542}

\renewcommand{\PaperNumber}{080}

\FirstPageHeading

\ShortArticleName{Differential Geometric Aspects of Causal Structures}

\ArticleName{Differential Geometric Aspects of Causal Structures}

\Author{Omid MAKHMALI}

\AuthorNameForHeading{O.~Makhmali}

\Address{Institute of Mathematics, Polish Academy of Sciences,\\ 8~\'Sniadeckich Str., 00-656 Warszawa, Poland}
\Email{\href{mailto:omakhmali@impan.pl}{omakhmali@impan.pl}}

\ArticleDates{Received April 25, 2017, in final form July 23, 2018; Published online August 02, 2018}

\Abstract{This article is concerned with causal structures, which are defined as a field of tangentially non-degenerate projective hypersurfaces in the projectivized tangent bundle of a manifold. The local equivalence problem of causal structures on manifolds of dimension at least four is solved using Cartan's method of equivalence, leading to an $\{e\}$-structure over some principal bundle. It is shown that these structures correspond to parabolic geomet\-ries of type $(D_n,P_{1,2})$ and $(B_{n-1},P_{1,2})$, when $n\geq 4$, and $(D_3,P_{1,2,3})$. The essential local invariants are determined and interpreted geometrically. Several special classes of causal structures are considered including those that are a lift of pseudo-conformal structures and those referred to as causal structures with vanishing \textsc{Wsf} curvature. A twistorial construction for causal structures with vanishing \textsc{Wsf} curvature is given. }

\Keywords{causal geometry; conformal geometry; equivalence method; Cartan connection; parabolic geometry}

\Classification{53A55; 58A15; 58A30}

\section{Introduction}\label{sec:introduction}

\subsection{Motivation and history}\label{sec:motivation-history}

In the general theory of relativity, space-time is represented by a four dimensional manifold~$M$ equipped with a Lorentzian metric $g$. An important feature of a Lorentzian structure is its associated field of null cones in $TM$ given by the set of vectors $v\in TM$ satisfying $g(v,v)=0$. As a result, it is possible to define a relation of \emph{causality} between the points of $M$, i.e., a partial ordering $x\prec y$ defined by the property that $x$ can be joined to $y$ by a curve that is either time-like or null. If $x$ and $y$ represent two events in space-time then $x\prec y$ means that the occurrence of~$x$ has an effect on~$y$. In \cite{KP-causal} Kronheimer and Penrose examined causality in a manifold~$M$ of dimension~$n$ on an axiomatic basis. The starting point of their study is a continuous assignment of null cones in the tangent space of each point of~$M$ so that it is possible to define the set of points that are inside, outside and on the null cone in each tangent space.\footnote{It should be said that in \cite{KP-causal} it is assumed that each null cone is given by the vanishing locus of a quadratic form of signature $(n,1)$, i.e., conformal Lorentzian geometry. However, the results extend to the more general case of causal structures with strictly convex null cones.} The importance of the causal relationship between points of space-time became more significant in the context of the singularity theorems of Penrose and Hawking \cite{HE-SpaceTime, Penrose-Sing, Penrose-Technique}. The study of causal relations in spaces equipped with a field of convex null cones in $TM$ received a major contribution from the Alexandrov school in Russia under the name of \emph{chronogeometry} \cite{Alexandrov-chronogeometry, Guts-Causal}, i.e., a geometry determined by the chronological relations between the points of the manifold. The objective was to use the principles of causality and symmetry to give an axiomatic treatment of special relativity. In \cite{Segal-Book}, Segal attempted to give such a treatment for general relativity (see also \cite{DS-Segal, Levichev-SegalChronometric,Taub-Review, Wormald-Critique, EPS-GeometryOfLight}.) Subsequently, structures defined by assigning a convex cone in each tangent space, termed as causal structures, came to play a role in the Lie theory of semi-groups \cite{HO-causal}, causal boundaries \cite{GS-causal}, and hyperbolic systems of partial differential equations \cite{Schapira-Causal}.

Rather than to investigate causality relations between the points of a manifold with a given causal structure, the main idea of this article is to gain a deeper understanding of the geometry of causal structures through the solution of the Cartan equivalence problem.
This approach enables one to construct a complete set of local invariants and also to obtain some results on symmetries of causal structures.

Before going any further, a more precise definition of causal structures is in order. Given an $(n+1)$-dimensional manifold $M$, with $n\geq 3$, a \textit{causal structure} $(M,\cC)$ of signature $(p+1,q+1)$ with $p,q\geq 0$, and $p+q=n-1$ is given by a sub-bundle of the projectivized tangent bundle $\pi\colon \cC\subset \PP TM\ra M$, whose fibers $\cC_x\subset \PP T_xM$ are projective hypersurfaces\footnote{Here, a hypersurface is always taken to be a codimension one submanifold.} with projective second fundamental form of signature $(p,q)$ (see Definition~\ref{def:causal-str-def}). Note that in this definition the convexity of the null cones $\cC_x$ is not assumed.

Two causal structures $(M,\cC)$ and $(M',\cC')$ are locally equivalent around the points $p\in M$ and $p'\in M'$ if there exists a diffeomorphism $\psi\colon U\lra U', $ where $p\in U\subset M$, $p'=\psi(p)\in U'\subset M'$, satisfying $\psi_{*}(\cC_x)=\cC_{\psi(x)}$ for all $x\in U$.

In order to describe a causal structure in an open set $U\subset TM$, one can use a defining function $L\colon TM\ra\RR$, $(x^i;y^j)\mapsto L(x^i;y^j)$, where $(x^0,\dots,x^n)$ are coordinates for $M$ and $(y^0,\dots,y^n)$ are fiber coordinates. The function $L$ is assumed to be homogeneous of some degree $r_1$ in the fiber variables and is referred to as a Lagrangian for the causal structure. Then, the vanishing set of~$L$ over~$U$ coincides with $U\cap \widehat\cC$ where $\widehat\cC\subset TM$ is the cone over $\cC\subset\PP TM. $ The func\-tions~$L(x^i;y^i)$ and $S(x^i;y^i)L(x^i;y^i)$, where $S(x^i;y^i)$ is nowhere vanishing and homogeneous of degree~$r_2$ in~$y^i$'s, define the same causal structure.

As an example, if the fibers $\cC_x$ are hyperquadrics defined by a pseudo-Riemannian metric $g$ of signature $(p+1,q+1)$, then the causal structure $(M,\cC)$ given by the family of null cones of $g$ corresponds to the pseudo-conformal structure induced by the metric $g$.\footnote{A pseudo-conformal structure refers to the conformal class of a pseudo-Riemannian metric \cite{AG-Conformal}.} In other words, there exists a local defining function of this causal structure of the form $F(x^i;y^j)=g_{ij}(x)y^iy^j$.

The above description makes it clear that the relation of causal structures to pseudo-conformal structures is an analogue of what Finsler structures are to Riemannian structures. Recall that a Finsler structure $\big(M^{n+1},\Sigma^{2n+1}\big)$ is given by a codimension one sub-bundle $\Sigma\subset TM$, whose fibers $\Sigma_x\subset T_xM$ are strictly convex affine hypersurfaces, i.e., the second fundamental form of each fibers is positive definite. The sub-bundle $\Sigma$ is called the indicatrix bundle (see \cite{BryantRemarksFinsler}).\footnote{Similarly, it is possible to define pseudo-Finsler structures of signature $(p,q)$, $p+q=n+1$ by imposing the condition that the second fundamental form of the fibers $\Sigma_x$ have signature $(p-1,q)$.}

The main motivation behind the study of causal structures comes from a variety of perspectives. As a part of the geometric study of differential equations, it was shown by Holland and Sparling~\cite{HS2010causal} that there is locally a one-to-one correspondence between contact equivalence classes of third order ODEs $y'''=F(x,y,y',y'')$ and causal structures on their locally defined three dimensional solution space $M$.
A causal structure is given in this setting by a family of curves $\pi\colon \cC\ra M$ where $\cC_x:=\pi^{-1}(x)\subset \PP T_xM$ is a~\emph{non-degenerate} curve in the sense that it admits a well-defined projective Frenet frame. This causal structure descends to a conformal Lorentzian structure on~$M$ if a certain contact invariant of the third order ODE known as the W\"unschmann invariant vanishes. In other words, it is shown in~\cite{HS2010causal} that $\cC$ is locally isomorphic to $J^2(\RR,\RR)$\footnote{In this article $J^k\big(\RR^{l},\RR^m\big)$ denotes the space of $k$-jets of functions from $\RR^l$ to $\RR^m$.} endowed with a foliation by lifts of contact curves on $J^1(\RR,\RR)$ and that the W\"unschmann invariant at each point of $J^2(\RR,\RR)$ coincides with the projective curvature of the curve~$\cC_x$. As a result, the vanishing of the W\"unschmann invariant implies that the curves~$\cC_x$ are conics, i.e., the causal structure is conformal Lorentzian.

The work of Holland and Sparling is a part of the program of studying geometries arising from differential equations, which has a long history going back to Monge, Jacobi, Lie, Darboux, Goursat, Cartan and others. In the case of third order ODEs under contact transformations, Chern \cite{ChernODE} used Cartan's method to solve the equivalence problem and showed that the solution depends on two essential relative invariants, namely the W\"unschmann invariant $I(F)$ and the invariant $C(F)=\frac{\partial^4}{\partial q^4}F(x,y,p,q)$. Furthermore, he observed that if $I(F)=0$, then the space of solutions of the ODE can be endowed with a conformal Lorentzian structure. Using Tanaka theory~\cite{Tanaka-79}, Sato and Yashikawa \cite{SY-ODE} showed how one can use Chern's $\{e\}$-structure to associate a normal Cartan connection on some 10-dimensional principal bundle to the contact equivalence class of a third order ODE. In particular, they expressed the relative contact invariants of the ODE in terms of the curvature of the Cartan connection. Godlinski and Nurowski~\cite{GNThirdODEs} explicitly computed the normal Cartan connection and explored other geometric structures that arise from a third order ODE. In particular, they used a result of Fox~\cite{FoxContactProj} to show if $C(F)=0$, then the ODE induces a \emph{contact projective structure}\footnote{A contact path geometry over a contact manifold $N$ is given by a family of contact paths with the property that at each point $x\in N$, a unique curve of the family passes through each contact direction in $T_xM$. A contact projective geometry is a contact path geometry whose contact paths are geodesics of some affine connection.} on~$J^1(\RR,\RR)$.

One can realize the works mentioned above on third order ODEs in terms of geometries arising from the double fibration
\begin{gather*} %\label{eq:Intro-3rd-ODE-fibration}
J^1(\mathbb{R},\mathbb{R}) \overset{\tau}{\longleftarrow} J^2(\mathbb{R},\mathbb{R}) \overset{\pi}{\longrightarrow} M,
\end{gather*}
where the map $\pi$ is the quotient map of $J^2(\RR,\RR)$ by the foliation defined by the 2-jets of the solutions of the ODE. This map endows $M$ with a causal structure. The null geodesics of $M$ are given by $\pi\circ\tau^{-1}(p)$ where $p\in J^1(\RR,\RR)$, while the contact geodesics are given by $\tau\circ\pi^{-1}(q)$ where $q\in M$.

Equivalently, given a causal structure $(M,\cC)$ in dimension three, Holland and Sparling showed that the double fibration above is equivalent to
\begin{gather*}
\cN \overset{\tau}{\longleftarrow} \cC \overset{\pi}{\longrightarrow} M,
\end{gather*}
where $\cN$ is the space of \emph{characteristic curves} (or \emph{null geodesics})\footnote{In this article the term characteristic curves is used to refer to the lift of null geodesics to the null cone bundle.} of the causal structure. The precise definition of these curves is given in Section \ref{sec:char-curv}. It turns out that starting with a causal structure $(M,\cC)$, one can show that locally $\cC\cong J^2(\RR,\RR)$ and $\cN\cong J^1(\RR,\RR)$. Therefore, all the results mentioned above can be translated in terms of local invariants of the causal structure in three dimensions.

The correspondence above motivates the study of the local invariants of causal geometries in higher dimensions and differential equations that can be associated to them. As will be explained, this study turns out to be related to a number of current themes of research.

In \cite{HS2011Causal}, Holland and Sparling studied causal structures in higher dimensions and showed that an analogue of the Weyl sectional curvature can be defined. Also, they showed for causal structures the Raychaudhuri--Sachs equations from general relativity remains valid for higher dimensional causal structures (also see \cite{AS-Singularity, Minguzzi-Raychaudhuri}).

Before describing the nature of the complete set of invariants for causal structures it is helpful to make parallels with what happens in Finsler geometry. On an $(n+1)$-dimensional mani\-fold~$M$ the indicatrix bundle~$\Sigma$ of a Finsler structure is $(2n+1)$-dimensional equipped with a contact 1-form, i.e., a 1-form $\omega\in T^*\Sigma$ satisfying $\omega\w (\exd\omega)^n\neq 0$. The 1-form $\omega$ is called the Hilbert form of the Finsler structure. As indicated by Cartan \cite{CartanProbleme30, CartanFinsler} and Chern~\cite{ChernFinsler}, in Finsler geometry there are two essential invariants comprising the total obstruction to local flatness, namely the \textit{flag curvature} and the \textit{centro-affine cubic form} of the fibers~$\cC_x$ (also referred to as the Cartan torsion). The vanishing of the Cartan torsion implies that the Finsler structure is a Riemannian structure, in which case the flag curvature coincides with the sectional curvature.

Regarding causal structures, the main step is to realize that the $2n$-dimensional space $\cC$ carries a quasi-contact structure (also known as even contact structure), i.e., a 1-form $\omega^0$ satisfying $\omega^0\w(\exd\omega^0)^{n-1}\neq 0 $ and $ (\exd\omega^0)^n=0$.
The 1-form $\omega^0$ is referred to as the \emph{projective Hilbert form}.\footnote{This terminology is introduced in analogy with the Hilbert form in Finsler geometry and the calculus of variations~\cite{BCS-Finsler}.} It is shown that, for $n\geq 3$, there are two essential local invariants which form the total obstruction to local flatness. As one would expect, the \textit{Fubini cubic form} of each fiber~$\cC_x$ as a projective hypersurfaces in~$\PP T_xM$ constitutes one of these obstructions whose vanishing implies that the causal structure is a pseudo-conformal structure. The second essential invariant is a generalization of the sectional Weyl curvature of a pseudo-conformal structure restricted to the \emph{shadow space}\footnote{The terminology is due to a physical interpretation made by Sachs~\cite{Sachs-Gravitational} according to which if an object follows a congruence of null geodesics, its infinitesimal shadow casts onto the pull-back of this space. This term was initially used by Holland and Sparling in~\cite{HS2011Causal}.} as found by Holland and Sparling in~\cite{HS2011Causal} (see Section \ref{sec:essential-invariants}). By analogy with Finsler geometry, the second essential invariant is called the \textit{Weyl shadow flag curvature} (referred to as \wsf curvature.)

Furthermore, from the first order structure equations for causal geometries it follows that a~causal structure $(M,\cC)$ can be equivalently formulated as having a manifold $\cC^{2n}$ equipped with a distribution $D$ of rank $n$ which contains an involutive corank one subdistribution $\Delta\subset D$ and satisfies a certain non-degeneracy condition. The (small) growth vector of $D$ is $(n,2n-1,2n)$, i.e., $\rk [D,D]=2n-1$, and $\rk [D,[D,D]]=2n$\footnote{The sections of the distribution $[D,D]$ are given by $\Gamma([D,D])=[\Gamma(D),\Gamma(D)]$.} (see Section~\ref{sec:char-caus-struct}). In this case, $M^{n+1}$ is the space of integral leaves of $\Delta$.\footnote{In~\cite{ANNMonge}, these structures are referred to as the $B_n$ and $D_n$ classes of what they call parabolic geometries of Monge type.} This description identifies causal geometries as parabolic geometries modeled on $B_{n-1}/P_{1,2}$ and $D_n/P_{1,2}$ for $n\geq 4$ and $D_3/P_{1,2,3}$, with their natural $|3|$-grading. As will be shown, they have some similar features to path geometries~\cite{GrossmanPath} and contact path geometries~\cite{FoxPath}, which correspond respectively to parabolic geometries modeled on $A_n/P_{1,2}$ and $C_n/P_{1,2}$.

Another essential part of any causal structure is given by its set of \emph{characteristic curves.} Note that as a quasi-contact manifold, $\cC$ is equipped with the characteristic line field of the projective Hilbert form, i.e., the unique degenerate direction of $\exd\omega^0$ lying in $\Ker\,\omega^0$, defined by vector fields $\bfv$ satisfying
\begin{gather*} %\label{eq:Intro-char-curve}
 \omega^0(\bfv)=0,\qquad\bfv\im\exd\omega^0=0.
\end{gather*}
The integral curves of a characteristic vector field $\bfv$ is called a characteristic curve. In the case of pseudo-conformal structures the projection of these curves to $M$ coincide with null geodesics.

The description above shows that, unlike Finsler geometry, the definition of characteristic curves does not depend on a choice of metric. Recall that in Finsler geometry geodesics can be defined as the extremals of the arc-length functional
\begin{gather*}\Phi(\gamma)=\int_I \gamma^*\phi,\end{gather*}
where \looseness=-1 $\gamma\colon (a,b)=:I\ra \cC$ is a curve in $\cC$ and $\gamma^*\phi=L(x^i;y^i)\exd t$. The function $L(x^i;y^i)$ is called the Finsler metric which in the calculus of variations is referred to as the Lagrangian. The Finsler metric is homogeneous in $y^i$'s and has non-degenerate vertical Hessian\footnote{It is usually assumed that the Finsler metric when restricted to $T_xM$ only vanishes at the origin and is positive elsewhere \cite{BCS-Finsler}. However, as was mentioned previously, the condition can be relaxed~\cite{Shen-Book}.}. When addressing the geodesics of a Finsler structure, or more generally extremal manifolds in the calculus of variations, it is generally assumed that the Lagrangian does not vanish along them~\cite{Griffiths-EDS-Book}. The study of extremal curves along which the Lagrangian vanishes seems to have received much less attention in the literature (see~\cite{RundNullExt, RundNullDist}). The exception is the case of Lagrangians that are quadratic in $y^i$'s. This is due to the fact that they arise in the context of pseudo-conformal structures. For instance, this problem is briefly discussed by Guillemin \cite{GuilleminCosmology}, where he studies the deformations of the product metric $\pi^*_1\exd t^2\oplus -\pi_2^* g$ on $S^1\times S^2$ where $\pi_1$ and~$\pi_2$ are projections onto~$S^1$, and $S^2$ and~$g$ is the round metric on~$S^2$, which is \textit{Zollfrei}, i.e., all of its null geodesics are closed with the same period. In Section~\ref{sec:jacobi-fields} it is discussed how causal structures provide a geometric setup for the variational problems involving certain classes of Lagrangians $L(x^i;y^i)$ which are homogeneous in $y^i$'s and have vertical Hessian of maximal rank over their vanishing set.

The foliation of $\cC$ defined by the characteristic curves allows one to define the quotient map $\tau\colon \cC\ra\cN$ where~$\cN$ is $(2n-1)$-dimensional and is called the space of characteristic curves of~$\cC$. A priori, $\cN$ is only locally defined around generic points and can be used to give a geometric description of the theory of correspondence spaces for causal structure in the sense of \v{C}ap~\cite{CapCorr} (see Section~\ref{sec:semi-integrable-lie}). Note that the space $\cN$, when globally defined, is of interest in Penrose's twistor theory~\cite{Lebrun-conformal-gravity} and a twistorial desciption of causal relations among the points of the space-time~\cite{Low-Causal}.

Finally, an interesting instance of studying causal structure can be found in \cite{HwangVMRTCodim1} where Hwang considers complex causal structures arising on certain uniruled projective manifolds, $M^{n+1}$, satisfying the property that through a generic point $x\in M$ there passes an $(n-1)$-parameter family of \textit{rational curves of minimal degree.} The hypersurface $\cC_x\subset \PP T_xM$ obtained from the tangent directions to such curves at the point $x$ is called the variety of minimal rational tangent (or VMRT). Under the assumption that~$\cC_x$ is smooth of degree $\geq 3$ for general $x\in M$, he shows that the causal structure defined by them is locally isotrivially flat (see Definition~\ref{def:isotrivially-flat})\footnote{Hwang refers to such structures as flat cone structures of codimension one.}. Furthermore, it turns out that the characteristic curves for such causal structures coincide with rational curves of minimal degree\footnote{Motivated by the notion of \emph{conformal torsion} introduced by LeBrun \cite{LebrunNullGeod} and \emph{contact torsion} introduced by Fox \cite{FoxPath}, one can define \emph{causal torsion} whose vanishing implies that the rational curves of minimal degree coincide with the characteristic curves of the causal structure they define. The precise definition of causal torsion is not given in this article. }. Finally, assuming that $M$ has Picard number 1 and $n\geq 2$, he shows that $M$ is biregular to a hyperquadric equipped with its natural causal structure (see Section~\ref{sec:flat-model-2}). This theorem is a generalization of what has been considered previously in \cite{BelgunNullgeod,Ye} for complex conformal structures.

More broadly, Hwang's study of causal structures is a part of the differential geometric characterization of uniruled varieties initiated by Hwang and Mok \cite{HwangSurveyVMRT} via geometric structures referred to as \emph{cone structures} (see Definition \ref{def:defin-notat-conv}). The starting point for Hwang and Mok is a~given family of rational curves of minimal degree whose tangent directions at a generic point $x\in M$ give rise to the VMRT $\cC_x\subset \PP T_xM$. For instance, if $\cC_x=\PP T_xM$ one obtains the complex analogue of path geometry \cite{GrossmanPath} and if $\cC_x$ is a hyperplane which induces a contact distribution on~$M$, one obtains a complex contact path geometry~\cite{FoxPath}. The program of Hwang and Mok seems interesting and calls for further exploration from the perspective of the method of equivalence, especially through the lens of flag structures as developed in~\cite{DZquasi}.

\subsection{Main results}\label{sec:main-results}

In all the statements of this article smoothness of underlying manifolds and maps is always assumed and the dimension of $M$ is always taken to be at least four unless otherwise specified.

In Section \ref{sec:con e-struct-proj}, after reviewing the flat model for causal structures, the first order structure equations for a causal geometry $(M^{n+1},\cC)$ of signature $(p+1,q+1)$ are derived. It is shown that the bundle of null cones $\cC$ is foliated by characteristic curves and is endowed with the conformal class of a quadratic form which is degenerate along the vertical directions of the fibration $\cC\ra M$. Subsequently, the first order structure equations are used in Appendix~\ref{cha:append-full-struct} to obtain structure equations leading to an $\{e\}$-structure.

 The essential theorem of this article that is stated in Section \ref{sec:an-e-structure-1} is the following.
\newtheorem*{thm:e-structure}{\bf Theorem \ref{thm:e-structure}}
\begin{thm:e-structure}%\label{thm:e-structure}
To a causal structure $\big(M^{n+1},\cC^{2n}\big)$, $n\geq 4$, one can associate an $\{e\}$-structure on the principal bundle $P_{1,2}\ra\cP\ra\cC$ of dimension $\frac{(n+3)(n+2)}{2}$ where \smash{$P_{1,2}\subset \mathrm{O}(p+2,q+2)$} is the parabolic subgroup defined in \eqref{eq:P_12}. If $n=3$, then $P_{1,2}$ is replaced by $P_{1,2,3}$. The essential invariants of a causal structure are the Fubini cubic form of the fibers and the \wsf curvature. The vanishing of the essential invariants implies that the $\{e\}$-structure coincides with the Maurer--Cartan forms of $\mathfrak{o}(p+2,q+2)$.
\end{thm:e-structure}
The proof of the above theorem takes up most of Appendix~\ref{cha:append-full-struct}.
Using the $\{e\}$-structure thus obtained, it is straightforward to deduce that the essential invariants are the Fubini cubic form and the \wsf curvature. Section \ref{sec:norm-cart-conn} deals with the parabolic nature of causal structures. In Section~\ref{sec:essential-invariants} some geometrical interpretations of the essential invariants are given. The first Section ends with examples of causal structures.

In Section \ref{sec:non-prem-cond}, the structure equations for causal geometries with vanishing \wsf curvature are examined. It is shown that if the Lie derivative of the Fubini cubic form along the vector fields tangent to the characteristic curves is proportional to itself, then the \wsf curvature has to vanish, unless the causal geometry is a pseudo-conformal structure. Such spaces are the causal analogues of Landsberg spaces in Finsler geometry\footnote{See \cite{BCS-Finsler} for an account.}.

Section \ref{sec:semi-integrable-lie} involves the induced structure on the space of characteristic curves, $\cN$, of a causal structure with vanishing \wsf curvature. The $(2n-1)$-dimensional manifold $\cN$ is shown to have a contact structure and is endowed with an $(n+1)$-parameter family of Legendrian submanifolds with the property that through each point $\gamma\in\cN$ there passes a 1-parameter family of them. Moreover, the sub-bundle of $T\cN$ defined by the contact distribution contains a Segr\'e cone of type $(2,n-1)$. The theorem below is proved in which the property of being \emph{$\beta$-integrable} means that at every point $\gamma\in\cN$, the Segr\'e cone is ruled by the tangent spaces of the corresponding 1-parameter family of Legendrian submanifolds passing through $\gamma$.
\newtheorem*{thm:Lie-contact-conf-flat}{\bf Theorem \ref{thm:Lie-contact-conf-flat}}
\begin{thm:Lie-contact-conf-flat}%\label{thm:Lie-contact-conf-flat}
Given a causal structure with vanishing \wsf curvature $(\cC,M)$, its space of characteristic curves admits a $\beta$-integrable Lie contact structure. Conversely, any $\beta$-integrable Lie contact structure on a manifold $\cN$ induces a causal structure with vanishing \wsf curvature on the space of its corresponding Legendrian submanifolds.
\end{thm:Lie-contact-conf-flat}

It should be noted that the theorem above is reminiscent of Grossman's work \cite{GrossmanPath} on torsion-free path geometries. He showed that given a torsion-free path structure on $N^{n+1}$, the space of paths, $\cS^{2n}$, has a Segr\'e structure, i.e., each tangent space is equipped with a Segr\'e cone of type~$(2,n)$. Furthermore, $\cS$ contains an $(n+1)$-parameter family of $n$-dimensional submanifolds such that through each point $\gamma\in\cS$ there passes a 1-parameter family of them. Grossman showed that at each point $\gamma\in\cS$, the Segr\'e cone is ruled by the tangent spaces of the 1-parameter family of submanifolds passing through $\gamma$. Such Segr\'e structures are called \emph{$\beta$-integrable.}
Furthermore, twistorial constructions such as that of Theorem \ref{thm:Lie-contact-conf-flat} fit exactly into \v{C}ap's theorem of correspondence spaces \cite{CapCorr}. However, \v{C}ap's theory does not contain the geometric interpretation given here in terms of existence of submanifolds that rule the null cone or the Segr\'e cone. Such interpretations are clear from the constructions presented here.

Appendix~\ref{cha:append-full-struct} contains the details of the equivalence problem calculations. Since causal structures are examples of parabolic geometries of type $(B_{n-1},P_{1,2})$ and $(D_n,P_{1,2})$ if $n\geq 4$ and $(D_3,P_{1,2,3})$, they admit a \textit{regular} and \textit{normal} Cartan connection \cite{ANNMonge, CS-Parabolic}. Using the machinery of Tanaka, the correct change of coframe can be given recursively, in order to introduce a Cartan connection on the structure bundle of causal geometries which is normal in the sense of Tanaka. However, for computational reasons, writing down the precise form of the Cartan connection leads to very long expressions which are not particularly enlightening (see \cite{Omid-Thesis}). Furthermore, the first order structure equations suffice for the purposes of this article.

\section{Local description of causal structures} \label{sec:con e-struct-proj}

In this section, after introducing the necessary definitions and notations, the first order structure equations of causal structures are derived. It is assumed that the reader is familiar with Cartan's method of equivalence for $G$-structures (see \cite{Gardner-Book,Olver-Equiv-Book} for an account). The essential torsion terms appearing in the first structure equations are interpreted geometrically. The derivation of the full structure equations is carried out in Appendix~\ref{cha:append-full-struct}. In Section~\ref{sec:essential-invariants} a discussion on the fundamental invariants of a causal structure is given. Finally, the section ends with a few examples of causal structures.

\subsection{Definitions and notational conventions}\label{sec:causal-structures}

Since the main purpose of this article is the local study of causal geometries, the smoothness of manifolds and maps that are considered is always assumed. In other words, the manifolds considered are assumed to be the maximal open sets over which the necessary smoothness conditions are satisfied.

Let $M$ be a smooth $(n+1)$-dimensional (real or complex) manifold, $n\geq 3$ and $\PP TM$ denote its projectivized tangent bundle with projection $\pi \colon \PP TM\ra M$, and fibers $\pi^{-1}(x)=\PP T_xM$. Given a tangent vector $y\in T_xM$, its projective equivalence class is denoted by $[y]\in\PP T_xM$. For $S\subset\PP T_xM$, the cone over $S$ is defined as
\begin{gather*}\widehat S:= \{y\in T_xM \,|\, [y]\in S \}\subset T_xM.\end{gather*}

Throughout this article, the indices range as
\begin{gather*}0\leq i, j, k\leq n,\qquad 1\leq a, b, c\leq n-1,\qquad 0\leq\alpha,\beta\leq n-1,\end{gather*}
and are subjected to the summation convention.
When a set of 1-forms is introduced as $\cI=I\{\omega^i\}_{i=0}^{n}$, it is understood that $\cI$ is the ideal algebraically generated by $\{\omega^0,\dots,\omega^n\}$ in the exterior algebra of $M$.

The symmetrization and anti-symmetrization operations for tensors are denoted using Penrose's abstract index notation. For example, consider a tensor with coefficients $A_{abcd}$. The tensor $A_{(ab|c|d)}$ is symmetric with respect to all indices except the third, and $A_{[a|bc|d]}$ is anti-symmetric in the first and last indices. When dealing with symmetric forms such as $g=2\omega^0\circ\omega^n-\ve_{ab}\omega^a\circ\omega^b$, or $F_{abc}\theta^a\circ\theta^b\circ\theta^c$ with $F_{abc}=F_{(abc)}$, the symbol $\circ$ denotes the symmetric tensor product. Bold-face letters such as $\bfi$, $\bfj$, $\bfa$, $\bfb$ are used to denote a specific index, e.g., the set $\{\omega^\bfa\}$ only contains the 1-form $\omega^\bfa$. The summation convention is not applied to bold-face letters.

At this stage, before stating the definition of a causal structure, a more general geometric structure, namely that of a \emph{cone structure}, is defined~\cite{HwangSurveyVMRT}. The reason for this is that in the process of coframe adaption for cone structures of codimension one, causal structures are distinguished as a sub-class of such geometries via the characterizing property of having \emph{tangentially non-degenerate} fibers (see Section~\ref{sec:tang-non-degen}).
\begin{df}\label{def:defin-notat-conv}
 A \textit{cone structure} of codimension $k\leq n$ on $M^{n+1}$ is given by an immersion $\iota \colon \cC\ra\PP TM$ where $\cC$ is a connected, smooth manifold of dimension $2n+1-k$ with the property that the fibration $\pi\circ\iota \colon \cC\ra M$ is a submersion whose fibers $\cC_x:=(\pi\circ\iota)^{-1}(x)$ are mapped, via the immersion $\iota_x \colon \cC_x\ra \PP T_xM$, to connected projective submanifolds of codimension~$k$ in~$\PP T_xM$.

 Two cone structures $\iota \colon \cC\ra \PP TM$ and $\iota' \colon \cC'\ra\PP TM'$ are locally equivalent around points $x\in M, x'\in M'$, if there exists a diffeomorphism $\phi \colon U\ra U'$ where $x\in U\subset M$ and $x'\in U'\subset M'$, such that $x'=\phi(x)$ and $\phi_*\left(\iota(\cC)\right)=\iota'(\cC'_{\phi(y)})$ for all $y\in M$.
\end{df}

In this article, only cone structures of codimension one are considered i.e., the fibers $\iota(\cC_x)\subset\PP T_xM$ are projective hypersurfaces. Since the map $\iota \colon \cC\ra \PP TM$ is assumed to be an immersion and this article concerns the local nature of geometric structures, it is safe to drop the symbol $\iota$ in $\iota(\cC)$ and $\iota(\cC_x)$ and denote them as $\cC$ and $\cC_x$, respectively. As a result, by restricting to small enough neighborhoods, one can safely assume $\cC\subset\PP TM$ is a sub-bundle defined in an open set $U\subset\PP TM$, where the fibers $\cC_x$ are smooth submanifolds. Consequently, a cone structure of codimension one over a manifold $M$ can be represented by $(\cC,M)$.

In local coordinates, a point of $TM$ is represented by $(x;y)$ where $x=(x^0,\dots,x^n)$ are the base coordinates and $y=(y^0,\dots,y^n)$ are the fiber coordinates. In a sufficiently small neighborhood of a point $p=(x;z)\in\cC$, the homogeneous coordinates $[y^0:\ldots:y^n]$ can be chosen so that $z=[0:\ldots:0:1]$ and the cone $\wh{\cC_x}$ is tangent to the hyperplane $\{y_0=0\}$ along~$\hat z$. The affine coordinates $(y^0,\ldots,y^{n-1})$ obtained from setting $y^n=1$, are used to derive the explicit expressions of the invariants. Using these preferred coordinates in a sufficiently small neighborhood of~$p$, it is assumed that $\cC$ is given in terms of homogeneous coordinates as a locus
 \begin{gather*} %\label{eq:ConeStr-DefImplic}
 L\big(x^0,\ldots,x^n;y^0,\ldots,y^n\big)=0,
 \end{gather*}
where $L$ is homogeneous of degree $r$ in $y^i$ and, in terms of affine coordinates, as a graph
 \begin{gather} \label{eq:ConeStr-DefGraph}
 y^0=F\big(x^0,\ldots,x^n;y^1,\ldots,y^{n-1}\big).
 \end{gather}

\begin{exa}\label{exa:contact-quasi-contact}
 A cone structure of codimension one on $M^{n+1}$ whose fibers are hyperplanes given by $\Ker \omega$ for some $\omega\in T^*M$, and are \emph{maximally non-integrable}, i.e., $\omega\w(\exd\omega)^{k-1}\neq 0$, is called a \textit{contact structure} if $n=2k-1$, and a \textit{quasi-contact structure} (or \textit{even contact structure}) if $n=2k$. These can be considered as ``degenerate'' examples of codimension one cone structures and have no local invariants. In these cases, the function $F$ in~\eqref{eq:ConeStr-DefGraph} is of the form $F(x;y)=c_a(x)y^a+c_n(x)$, with $\omega=\exd x^0+c_a(x)\exd x^a+c_n(x)\exd x^n$.
\end{exa}
\begin{exa}A pseudo-conformal structure, i.e., a conformal class of a pseudo-Riemannian metric $g_{ij}$ of signature $(p+1,q+1),\ p,q\geq 0$, can be introduced by its field of null cones, i.e., the set of quadrics
\begin{gather*}\cC_x:=\big\{ [y]\in\PP T_x \,|\, g_{ij}y^iy^j=0\big\},\end{gather*}
As a result, a pseudo-conformal structure is an example of a cone structure of codimension one, which, in contrast to the previous example, is ``non-degenerate'' (see section \ref{sec:tang-non-degen}). In terms of the local form \eqref{eq:ConeStr-DefGraph}, a representative $g_{ij}$ of the conformal class, at a point $s=(x;y)\in\cC$, can be put in the form $2y^0y^n-\ve_{ab}y^ay^b$, where
\begin{gather*} %\label{eq:varepsilon-ab}
 [\varepsilon_{ab}]=\II_{p,q}:=
\begin{pmatrix}
 \II_p & 0\\
 0 & -\II_q
\end{pmatrix}
,\end{gather*}
 and therefore $F(s)=\varepsilon_{ab}y^ay^b$.
 \end{exa}

In Section \ref{sec:tang-non-degen}, when a coframe $(\omega^0,\ldots,\omega^n,\theta^1,\ldots,\theta^{n-1})$ over $\cC$ is introduced, the corresponding coframe derivatives are denoted by
 \begin{gather*}\exd G=G_{;i}\omega^i+G_{;\una}\theta^a.\end{gather*}

\subsection{The flat model}\label{sec:flat-model-2}
When setting up the equivalence problem for a given geometric structure, finding the flat model can be very helpful for identifying the structure bundle. Naturally, one expects the flat model for causal structures to coincide with the flat model for conformal geometry.
In causal geometry, the flat model is obtained from the natural pseudo-conformal structure on a quadric.
Consider the quadric $Q_{p+1,q+1}^{n+1}\subset\PP^{n+2}$, where $p+q=n-1$, given by ${}^t\!vSv=0$, where $v={}^t\!(y,x^0,x^1,\dots,x^{n},z)\in\RR^{n+3}$ and
\begin{gather*}S:=\begin{pmatrix}
 0 & 0 & 0 & 0 & 1\\
 0 & 0 & 0 & -1 & 0\\
 0 & 0 & \II_{p,q} & 0 & 0\\
 0 & -1 & 0 & 0 & 0\\
 1 & 0 & 0 & 0 & 0\\
\end{pmatrix}
\end{gather*}
has signature $(p+2,q+2)$. It is equipped with a pseudo-conformal structure of signature $(p+1,q+1)$, expressed as $2y^0 y^n-\ve_{ab} y^ay^b$ where $y^i$'s denote the fiber coordinates of $TQ_{p+1,q+1}^{n+1}$. The null cone bundle of this pseudo-conformal class is denoted by $\cC\subset\PP TQ_{p+1,q+1}^{n+1}$. Consider the \emph{flag manifold }
\begin{gather*}% \label{eq: NullFlag}
 \NN\FF^{n+1}_{0,1}:=\big\{(V_1,V_2)\in \mathrm{Gr}^0_1(n+3)\times \mathrm{Gr}^0_2(n+3)\colon V_1\subset V_2\big\},
\end{gather*}
where $\mathrm{Gr}^0_k(m)$ denotes the space of $k$-dimensional subspaces of $\RR^{m}$ which are null with respect to the inner product $S$, e.g., $\mathrm{Gr}_1^0(n+3)\cong Q_{p+1,q+1}^{n+1}$. Taking a basis $\{e_1,\dots,e_m\}$ for $\RR^m$, the transitivity of the action of $G_{p,q}:=\mathrm{O}(p+2,q+2)$ on $\mathrm{Gr}^0_k(m)$ results in isomorphisms $\mathrm{Gr}^0_k(m)\cong G_{p,q}/P_k$ where $P_k$ is the parabolic subgroup isomorphic to the stabilizer of the subspace spanned by $\{e_1,\dots,e_k\}$.

Let $m=n+3$, $n\geq 4$ and define the parabolic subgroup $P_{k_1, \dots, k_i}:=P_{k_1}\cap\cdots\cap P_{k_i}$ of $G$. Consequently, if follows that
\begin{gather}
 \NN\FF^{4}_{0,1} \cong G_{p,q}/P_{1,2,3},\qquad p+q=2,\nonumber\\
 \NN\FF^{n+1}_{0,1} \cong G_{p,q}/P_{1,2},\qquad p+q=n-1\geq 3.\label{eq:P_12}
\end{gather}
The relevant fibrations are shown in the diagram below:
\begin{gather*}%\label{D:diagram1}
\begin{diagram}
 & &G_{p,q} & \\
 & &\dTo^\upsilon &\\
 &Q_{p+1,q+1}^{n+1} \lTo^{\quad\pi\quad } & \NN\FF^{n+1}_{0,1} & \rTo^{\quad\tau\quad } \mathrm{Gr}^0_2(n+3). &
\end{diagram}
\end{gather*}
Note that $\NN\FF^{n+1}_{0,1}$ is isomorphic to $\cC$. The Maurer--Cartan form on $G_{p,q}$ determines a canonical Cartan connection over $\cC$ which, as will be shown, is associated to the flat causal structure. Moreover, the fibration of $G_{p,q}$ over $\mathrm{Gr}^0_2(n+3)$ gives the flat model for Lie contact structures studied in \cite{MiyaokaLieContact,SYLieContactStr1}. As will be seen, these structures arise on the space of characteristic curves of certain classes of causal geometries.

The Maurer--Cartan form of $G_{p,q}$ can be written as
\begin{gather} \label{eq:FlatModelCausal}%\def\arraystretch{1.3}
 \varphi= \begin{pmatrix}
 \phi_0+\phi_n & -\pi_n & -\pi_b & -\pi_0 & 0 \\
 \omega^n & \phi_0-\phi_n & \gamma_b & 0 & -\pi_0 \\
 \omega^a & \theta^a & -\phi^a{}_b & \gamma^a &\pi^a \\
 \omega^0 & 0 & \theta_b & -\phi_0+\phi_n & -\pi_n\\
 0 & \omega^0 & -\omega_b & \omega^n & -\phi_0-\phi_n
 \end{pmatrix},
\end{gather}
with $\exd\varphi+\varphi\w\varphi=0$. The symbol $\ve_{ab}$ is used to lower and raise indices. The set of semi-basic\footnote{A 1-form $\omega\in T^*\cC$ is semi-basic with respect to the fibration $\pi\colon \cC\rightarrow M $ if $\omega(v)=0$ for all $v\in \pi_*$.} 1-forms with respect to the projections $\upsilon,\upsilon\circ\pi$ and $\upsilon\circ\tau$ are respectively
\begin{gather*}
 \cI_1=\big\{\omega^0,\dots,\omega^n,\theta^1,\dots,\theta^{n-1}\big\},\qquad
 \cI_2=\big\{\omega^0,\dots,\omega^n\big\},\\
 \cI_3=\big\{\omega^0,\dots,\omega^{n-1},\theta^1,\dots,\theta^{n-1}\big\}.
 \end{gather*}

Each tangent space $T_p\cC$ is equipped with a filtration defined as follows. At $p\in\cC$, the line over~$p$, denote by $\hat p\subset T_{\pi(p)}Q_{p+1,q+1}^{n+1}$, is null. Let $\omega^0$ denote a representative of the dual projective class of $\hat p$ with respect to the conformally defined inner product $S$. For instance, if $p=[e_n]$, then take $\omega^0=e_n^\flat$. Define $K^2:=\Ann(\pi^*\omega^0)\subset T\cC$. Using the tautological bundle
\begin{gather*}K^1:=\big\{v\in T_p\cC\colon \left(\pi_*(p)\right)(v)\in \hat p\subset T_{\pi(p)}Q_{p+1,q+1}^{n+1}\big\},\end{gather*}
the filtration $K^1\subset K^2\subset K^3:=T\cC$ is obtained. At $p\in \cC$, let $K_{-1}(p):=K^1(p)$, $K_{-2}:=K^2(p)/K^1(p)$ and $K_{-3}:=K^3(p)/K^2(p)$. The graded vector space
\begin{gather} \label{eq:Flat-model-Grading}
 \fm(p)=K_{-1}(p)\oplus K_{-2}(p)\oplus K_{-3}(p)
\end{gather}
can be associated to the filtration of $T_p\cC$. Note that at each $p\in \cC$, $\fm(p)\cong\fm$. The Lie bracket on $T\cC$ induces a bracket on $\fm$ which is tensorial and makes $\fm$ a graded nilpotent Lie algebra (GNLA), i.e., $[K_i,K_j]=K_{i+j}$ with $K_i=0$ for all $i\geq 4$ and $K_{-1}$ the generating component. Let $G_0$ denote the subgroup of automorphisms of $T\cC$, i.e., $\mathrm{O}(p+2,q+2)$, which preserves the grading of $\fm$ via the adjoint action. Let $K_0$ denote the Lie algebra of $G_0$. Then, the Lie algebra of automorphisms of $\cC$, i.e., $\fg=\orth(p+2,q+2)$, admits a $\ZZ$-grading defined by
\begin{gather*}\fg_k:=\{g\in\fg\colon g\cdot K_i\subset K_{i+k} \mathrm{\ for\ } 0\geq i\geq -3 \}.\end{gather*}
This is a $|3|$-grading, i.e., $\fg=\oplus_{i=-3}^3\fg_i$ such that $[\fg_i,\fg_j]\subset\fg_{i+j}$ with $\fg_i=0$ for $|i|>3$, and $\fg_{-1}$ generates $\fg_-:=\fg_{-1}\oplus\fg_{-2}\oplus\fg_{-3}$.

Note that $K^1=E \oplus \mathrm{Ver}$ where $\mathrm{Ver}$ is the vertical distribution with respect to the fibration $\pi$ and $E=\pi_*(\fg_{-1})$. Moreover, $T\cC$ is equipped with a conformally defined quadratic form obtained by lifting the canonical one defined on $Q_{p+1,q+1}^{n+1}$,
\begin{gather*} g=2\omega^0\circ\omega^n-\varepsilon_{ab}\omega^a\circ\omega^b,\end{gather*} which is degenerate on $\mathrm{Ver}$. The group of automorphisms of $T\cC$, which preserves the grading of $\fm$, also preserves the conformal class of the lifted quadratic form $g$ on $T\cC$.
The Lie algebra $\fg_0\subset \fg$ that corresponds to the group of automorphisms of $\fm$ which preserves the grading, is isomorphic to $\mathfrak{co}(p,q)\oplus\RR$.

Using the Maurer--Cartan form $\varphi$, the subspaces $\fg_{-3},\fg_{-2}, $ and $\fg_{-1}$ are dual to the Pfaffian systems $\cI_{-3}:=\{\omega^0\}$, $\cI_{-2}:=\{\omega^a\}_{a=1}^{n-1}$, and $\cI_{-1}:=\{\omega^n,\theta^a\}_{a=1}^{n-1}$.

In Sections \ref{sec:norm-cart-conn-1} and \ref{sec:harmonic-curvature} the correspondence between causal structures and parabolic geo\-metries of type $(B_{n-1},P_{1,2})$ and $(D_n,P_{1,2})$ for $n\geq 4$ and $(D_3,P_{1,2,3})$ will be made more explicit.

\subsection{The first order structure equations}\label{sec:structure-equations}
In this section the first order structure equations for causal geometries are derived via the method of equivalence. The procedure involves successive coframe adaptations, as a result of which various important aspects of causal geometries are unraveled, e.g., the quasi-contact structure of~$\cC$, the projective invariants of the fibers $\cC_x$, and the \wsf curvature. Also, it is noted that a~causal structure can be equivalently formulated as a codimension one sub-bundle of~$\PP T^*M$, which is called a dual formulation of causal geometries. Such formulation is traditionally preferred in the context of geometric control theory \cite{AZ-JacobiCurves}.

The derivation of the first order structure equations should be compared to that of Finsler structures as presented in~\cite{BryantRemarksFinsler}.

\subsubsection{The projective Hilbert form}\label{sec:hilbert-form}

Given a cone structure of codimension one $(\cC,M)$, the subset $\cC_x\subset \PP T_xM$ is a projective hypersurface, for all $x\in M$. At a point $y\in \cC_x$, the affine tangent space $\widehat{T}_y\cC_x$ is defined as the hyperplane $T_z\wh \cC_x\subset T_xM$ tangent to the cone $\wh \cC_x\subset T_xM$, containing $z\in\hat y$. Note that this subspace is independent of the choice of $z$. It is important here to distinguish between the affine tangent space $\widehat{T}_y\cC_x\subset T_xM$ and the vertical tangent space $\mathrm{V}T_{(x;y)}\cC$ corresponding to the fibration $\pi\circ \iota\colon \cC\ra M$.

The 1-form $\omega^0\in T^*\cC$, at $(x;y)\in\cC$ is defined as a multiple of $(\pi\circ\iota)^*(\alpha)$ where $\alpha:=\Ann\big(\widehat{T}_y\cC_x\big)\in T_x^*M$. By definition, $\omega^0$ is defined up to a multiplication by a nowhere vanishing smooth function on $\cC$ and is therefore semi-basic with respect to the fibration $\pi\colon \cC\ra M$. In this article $\omega^0$ is referred to as the projective Hilbert form.

\subsubsection{The adapted flag}\label{sec:adapted-flag}

At each point $(x;y)\in\cC$, there is a natural basis $\big(\fomegaz,\dots,\fomegann\big)$ for $ T_xM$ adapted to the flag
\begin{gather*}
\hat{y}\subset \widehat{T}_{y}\cC_x\subset T_xM,
\end{gather*}
 where $\big\{\fomegann\big\} $ and $\big\{\fomegaone,\ldots,\fomegann\big\}$, respectively, span $\hat y$ and $\widehat{T}_{y}\cC_x$.

In terms of the graph \eqref{eq:ConeStr-DefGraph}, the local expressions are
\begin{gather*} %\label{eq:AdaptedFrame01}
 \frac{\partial}{\partial\omega^n}= F\frac{\partial}{\partial x^0}+y^a\frac{\partial}{\partial x^a}+\frac{\partial}{\partial x^n},\qquad
\frac{\partial}{\partial\omega^a} =\partial_\una F\frac{\partial}{\partial x^0}+\frac{\partial}{\partial x^a},\qquad
\frac{\partial}{\partial\omega^0} =\frac{\partial}{\partial x^0}.
\end{gather*}
The dual coframe $\big(\omega^0,\ldots,\omega^n\big)$ can be written as
\begin{gather} \label{eq:AdaptedCoframe01}
 \omega^n=\exd x^n,\qquad
 \omega^a=\exd x^a-y^a\exd x^n,\qquad
 \omega^0=\exd x^0-\partial_{\una}F\exd x^a+(y^a\partial_\una F-F)\exd x^n.
\end{gather}
The pull-back of these 1-forms to $\cC$ spans the semi-basic 1-forms with respect to the fibration $\pi\colon \cC\ra M$. At each point $p\in\cC$, given by $p=(x^i;y^i)$, these 1-forms are adapted to the flag
\begin{gather}\label{Conframe-adaptation-00}
 \mathrm{V} T_p\cC\subset K_p\subset H_p\subset T_p\cC,
\end{gather}
 where $\mathrm{V}T_pM=\Ker\{\omega^0,\dots,\omega^n\}$ is the vertical tangent bundle, $K_p=\Ker\{\omega^0,\dots,\omega^{n-1}\}$ is the fiber of the tautological vector bundle given by
\begin{gather*}K_p=\{v\in T_p\cC\,|\, \pi_*(p)(v)\in \hat y\subset T_{\pi(p)}M\},\end{gather*}
where $p=(x;y)$, the line $\hat y$ corresponds to the projective class of $y$ and $H_p=\Ker\{\omega^0\}$ is the quasi-contact distribution (see Section~\ref{sec:char-curv}). To clarify the notation, if $p=(x;y)$ then the fiber $\mathrm{V}T_p\cC$ is also denoted by $T_y\cC_x$ and the quasi-contact distribution can be expressed as the splitting $T_y\cC_x\oplus\wh T_y\cC_x$ which is not unique.

\subsubsection{Causal structures and the first coframe adaptation}\label{sec:tang-non-degen}

Recall that a projective hypersurface $\cV\subset\PP^n$ is called \emph{tangentially non-degenerate} if its projective second fundamental form has maximal rank (see \cite{AG-Projective,SasakiBook}).
Now the stage is set to define causal structures.
\begin{df} \label{def:causal-str-def} A \textit{causal structure} of signature $(p+1,q+1)$ on $M^{n+1}$ is a cone structure of codimension one, $(M,\cC)$, where the fibers $\cC_x$ are tangentially non-degenerate projective hypersurfaces and their projective second fundamental form has signature $(p,q)$ everywhere.
\end{df}
In Section~\ref{sec:2-adapted-coframe} it is shown that the tangential non-degeneracy assumption of the fibers $\cC_x$ implies that the signature of $h^{ab}$ at any point of $\cC$ determines its the signature of $h^{ab}$ everywhere.

Let $\exd_V$ denote the vertical exterior derivative, i.e., the exterior differentiation with respect to fiber variables $y^i$. Using the expressions~\eqref{eq:AdaptedCoframe01}, it follows that
\begin{gather} \label{eq:d_v-omega0}
 \exd_V\omega^0\equiv -\theta^0{}_a\w\omega^a,\qquad \operatorname{mod} \ I\big\{\omega^0\big\},
\end{gather}
where $\theta^0{}_a=\partial_\una\partial_\unb F\exd y^b$. As a result of \eqref{eq:d_v-omega0} one obtains
 \begin{gather*}
\exd\omega^0=-2\theta^0{}_0\w\omega^0-\theta^0{}_a\w\omega^a+T_{0i}\omega^0\w\omega^i+\half T_{ab}\omega^a\w\omega^b+T_{an}\omega^a\w\omega^n.
 \end{gather*}
By setting
$\phi_0:= \theta^0{}_0-T_{0i}\omega^i$, $ \theta_a:= \theta^0{}_a-\half T_{ab}\omega^b -T_{an}\omega^n, $
one arrives at
 \begin{gather} \label{eq:domega0}
\exd\omega^0=-2\phi_0\w\omega^0-\theta_a\w\omega^a,
 \end{gather}
 which implies that $\omega^0$ has rank $2n-1$, i.e., $\omega^0\w(\exd\omega^0)^{n-1}\neq 0$.

The 1-forms $\theta_a$ define vertical 1-forms with respect to the fibration $\pi\colon \cC\ra M$. The 1-forms~$\theta_a$ are linearly independent everywhere when the fibers $\cC_x$ are tangentially non-degenerate at every point, i.e., $\det(\partial_\una\partial_\unb F)\neq 0$.

Since the vertical 1-forms $\theta_1,\ldots,\theta_{n-1}$ are linearly independent, at each point $(x;y)\in\cC$, one obtains a coframe $\cI_{\mathsf{tot}}:=\{\omega^0,\ldots,\omega^n,\theta_1,\ldots,\theta_{n-1}\}$ for $T_{(x;z)}\cC$ adapted to the flag
\begin{gather} \label{eq:flag}
 \mathrm{V} T_p\cC\subset K_p\subset H_p\subset T_p\cC,
\end{gather}
defined previously in \eqref{Conframe-adaptation-00}. Such a coframe is called \textit{$1$-adapted}. For a quadric, this filtration coincides with the one introduced in \eqref{eq:Flat-model-Grading}. In analogy with the flat model, the following algebraic ideals are defined:
\begin{gather*} %\label{eq:algebraic-idea-of-causal}
 \cI_{\mathsf{tot}}=I\big\{\omega^0,\dots,\omega^n,\theta^1,\dots,\theta^{n-1}\big\},\qquad
 \cI_{\mathsf{bas}}=I\big\{\omega^0,\dots,\omega^n\big\},\\
 \cI_{\mathsf{char}}=I\big\{\omega^0,\dots,\omega^{n-1},\theta^1,\dots,\theta^{n-1}\big\}.
\end{gather*}

Using the expressions~\eqref{eq:AdaptedCoframe01}, equation \eqref{eq:domega0} gives
\begin{gather} \label{eq:theta-a-coframe01}
 \theta_a=\partial_\una\partial_\unb F\exd y^b+\partial_0\partial_\una F\exd x^0+\partial_b\partial_\una F\exd x^b+(\partial_\una\partial_n F-\partial_aF-\partial_0F\partial_\una F)\exd x^n.
\end{gather}
 From the expression above it follows that the linear independence of $\theta_a$'s modulo $\omega^i$'s is equivalent to the non-degeneracy of the vertical Hessian $\partial_\una\partial_\unb F$.

The fact that such a coframing is adapted to the flag \eqref{eq:flag} and satisfies \eqref{eq:domega0} implies that different choices of 1-adapted coframes are related by
\begin{alignat*}{3}
& \omega^0 \lmt \bfa^2\omega^0,\qquad && \omega^a \lmt \bfE^a\omega^0+\bfA^a{}_b\omega^b,& \\
& \omega^n \lmt \bfe \omega^0+\bfB_a\omega^a+\bfb^2\omega^n,\qquad &&\theta_a \lmt \bfC_a\omega^0+\bfS_{ab}\big(\bfA^{-1}\big)^b{}_c\omega^c+\bfa^2\big(\bfA^{-1}\big)^b{}_a\theta_b.&
\end{alignat*}
In the language of $G$-structures, the structure group of the bundle of 1-adapted coframes, $G_1$, is of the form
 \begin{gather} \label{eq:G1}
\left\{%\def\arraystretch{1.0}
\begin{pmatrix}
 \bfa^2 & 0 & 0 & 0 \\
 \bfE & \bfA & 0 & 0 \\
 \bfe & \bfB & \bfb^2 & 0 \\
 \bfC & {}^t\!\bfA^{-1}\bfS & 0 & \bfa^2\,{}^t\!\bfA^{-1}
\end{pmatrix}
 \vl
\begin{array}{@{}l@{}}
 \bfA\in \GL_{n-1}(\RR),\quad \bfa, \bfb \in \RR\backslash\{0\} , \\
 {}^t\bfB, \bfC, \bfE\in \RR^{n-1},\quad \bfe\in \RR,\\
 \bfS\in M_{n-1}(\RR),\quad \bfS={}{}^t \bfS
\end{array}
 \right\}.
 \end{gather}
 \begin{rmk}\label{rmk:positive-causal-factor}
 The reason for using $\bfa^2$ and $\bfb^2$ in $G_1$ is as follows. Let $L(x^i;y^i)$ be a Lagrangian function for a causal structure, i.e., the vanishing set of $L$ defines $\cC$. As is conventional in the case of pseudo-conformal structures, it is desirable to restrict to the set of causal transformations preserving the sign of $L$ in a neighborhood of the null cones, i.e., restricting to the set of Lagrangians of the form~$S^2 L$ for some nowhere vanishing homogeneous function $S\colon TM\ra \RR$. Recall that the projective Hilbert form in terms of~$L$ is given by $\omega^0=\partial_\uni L\exd x^i$.
Replacing~$L$ by~$S^2 L$ and restricting to the zero locus of $L$, the projective Hilbert form transforms to~\smash{$S^2(L_\uni\exd x^i)=S^2\omega^0$}. This observation justifies the use of~$\bfa^2$ in~$G_1$.
Moreover, recall that a~line bundle is called orientable if it has a non-vanishing global section. In the case of contact manifolds, a~contact distribution is called \emph{co-orientable } or \emph{transversally orientable} if it has a~globally defined contact form~\cite{LMSympl87}. Similarly, a quasi-contact structure is called co-orientable if it has a globally defined quasi-contact form $\omega^0$. If the quasi-contact distribution on $\cC$ is given a co-orientation, it is preserved under the action of $G_1$. Similarly, if the characteristic field of a quasi-contact structure (see Section~\ref{sec:char-curv}) is given an orientation, then having~$\bfb^2$ in~\eqref{eq:G1} implies that the structure group $G_1$ preserves the orientation of the characteristic field. Moreover, the conformal scaling of the quadratic form $g$ defined in Section~\ref{sec:conf-class-quadr-1} by the action of~$G_1$ is given by~$\bfa^2\bfb^2$. As a result, it is possible to define space-like and time-like horizontal vector fields $v$ in $T\cC$ via the signature of $g(v,v)$.
 \end{rmk}

To implement Cartan's method of equivalence, one lifts the 1-adapted coframe to the $G_1$-bundle in the following way (see \cite{Gardner-Book, IL-Book, Olver-Equiv-Book} for the background).
Recall that the set of all coframes on $\cC$, defines a $\mathrm{GL}(2n)$-principal bundle $\tilde\varsigma\colon \tilde\cP\ra\cC$, with the right action
\begin{gather*}R_{g}( \vartheta_p)=g^{-1}\cdot \vartheta_p,\end{gather*}
where $\vartheta_p\in\varsigma^{-1}(p)$ is a coframe at $p\in \cC$, $g\in \mathrm{GL}(2n)$, and the action on the right hand side is the ordinary matrix multiplication.

Among the set of all coframes on $\cC$, let $\vartheta$ represent 1-adapted coframes ${}^t(\omega^0,\dots,\theta_{n-1})$, which are well-defined up to the action of $G_1$. As a result, the 1-adapted coframes are sections of a~$G_1$-bundle $\varsigma_1\colon \cP_1\ra\cC$. On $\cP_1\cong \cC\times G_1 $ one can define a~canonical set of 1-forms by setting
\begin{gather*}\underline \vartheta(p,g)={}^t\big(\underline\omega^0,\dots,\underline\theta_{n-1}\big):= g^{-1}\cdot \vartheta_p,\end{gather*}
where $\vartheta_p$ is a choice of 1-adapted coframe at $p\in\cC$.

The exterior derivatives of the 1-forms in $\underline \vartheta$ are given by
\begin{gather} \label{eq:str-eqns-strart}
 \exd\underline \vartheta=\exd g^{-1} \w \vartheta+g^{-1}\cdot \exd \vartheta=-g^{-1}\cdot\exd g\cdot \w g^{-1} \vartheta+g^{-1}\cdot\exd \vartheta=-\alpha\w \underline \vartheta+ T,
\end{gather}
where $\alpha(p,g)= g^{-1}\cdot\exd g$ is $\fg_1$-valued 1-form where $\fg_1$ denotes the Lie algebra of $G_1$. The 2-forms $T(p,g)$ can be expressed in terms of
$\underline\omega^i\w\underline\omega^j$, $\underline\theta_a\w\underline\omega^i$$,\underline\theta_a\w\underline\theta_b$. The terms constituting $T$ are called the torsion terms. They are semi-basic with respect to the projection $\varsigma_1$. Via $\alpha$, one can identify the vertical tangent spaces of the fibers of $\varsigma_1\colon \cP_1\ra\cC$ with $\fg_1$. The $\fg_1$-valued 1-form $\alpha$ is called a \emph{pseudo-connection} on $\cP_1$ which is not yet uniquely defined. Recall that the difference between a \emph{connection} and a pseudo-connection on $G_1$-bundle lies in the fact that a connection form $\tilde\alpha$ satisfies the equivariance condition $R^*_g\tilde\alpha=\mathrm{Ad}(g^{-1})\tilde\alpha$, where $g\in G_1$, while a pseudo-connection in general does not.

A choice of pseudo-connection in \eqref{eq:str-eqns-strart} gives rise to structure equations for the 1-forms $\underline\vartheta$. For 1-adapted coframes, the pseudo-connection forms are of the form,{\samepage
\begin{gather*} %\label{eq:Maurer-Cartan-G1}% \def\arraystretch{1.15}
\begin{pmatrix}
 2\phi_0 & 0 & 0 & 0 \\
 \gamma^a{}_0 & \psi^a{}_b &0 &0\\
 \gamma^n{}_0 & \gamma^n{}_a & 2\phi_{n}& 0 \\
 \pi_a & \pi_{ab} & 0 & 2\phi_0 \delta^a_b-\psi^a{}_b
\end{pmatrix},
\end{gather*}
where $\pi_{ab}=\pi_{ba}$.}

 It follows that
\begin{subequations} \label{eq:StrEqns-01}
\begin{gather}
 \exd\underline\omega^0 = -2\phi_0\w\underline\omega^0-\underline\theta_a\w\underline\omega^a,\label{1-adapted-1}\\
 \exd\underline\omega^a=-\gamma^a{}_0\w\underline\omega^0 -\psi^a{}_b\w\underline\omega^b-h^{ab}\,\underline\theta_b\w\underline\omega^{\n},\label{1-adapted-2}\\
 \exd\underline\omega^{\n} =-\gamma^n{}_0\w\underline\omega^0 -\gamma^n{}_a\w\underline\omega^a-2\phi_{\n}\w\underline\omega^{\n},\label{1-adapted-3}\\
 \exd\underline\theta_a=-\pi_a\w\underline\omega^0 -\pi_{ab}\w\underline\omega^b+\psi^b{}_a\w\underline\theta_b-2\phi_0\w\underline\theta_a
+\sigma_a\w\underline\omega^\n+\sigma^b{}_a\w\underline\theta_b,\label{1-adapted-4}
\end{gather}
\end{subequations}
where the 1-forms $\sigma_a$, $\sigma^b{}_a$, are the torsion terms and vanish modulo~$\Itot$. Their components are determined in Section~\ref{sec:conf-flag-curv-fourth-order}.

The equation of $\exd\underline\omega^0$ follows directly from~\eqref{eq:domega0}. Moreover, the Pfaffian system $\Ibas$, is completely integrable with the fibers $\cC_x$ being its integral manifolds. It follows that $\exd\omega^a=\tau^a{}_i\w\omega^i$, and $\exd\omega^n=\tau^n{}_i\w\omega^i$, where $\tau^a{}_i$, $\tau^n{}_i\equiv 0$ modulo~$\Itot$. Using the fact that the action of~$G_1$ preserves~$\Ibas$, and that two pseudo-connection differ by a semi-basic $\fg_1$-valued 1-form, all the 1-forms $\tau^a_i$, $\tau^n{}_i$ can be absorbed into the pseudo-connection forms. The only torsion term left is $h^{ab}\underline\theta_b\w\underline\omega^n$, for some function~$h^{ab}$, which is discussed in Section~\ref{sec:2-adapted-coframe}. From now on, since structure equations are always written for the lifted coframe $\underline\vartheta$, the underline in $\underline\omega^i$ and $\underline\theta_a$'s will be dropped.

\begin{rmk} \label{rmk:oriented-cone-str}
The oriented projectivized tangent bundle $\PP_+TM$ is defined so that at $x\in M$ the fiber $\PP_+T_xM$ is isomorphic to the space of \textit{rays} in $T_xM$, or, equivalently, the set of oriented real lines in $T_xM$ passing through the origin. As a result, $\PP_+ T_xM$ is the nontrivial double cover of $\PP T_xM$.

The above definition of cone structures, and more specifically causal structures, can be changed so that $\cC$ is a sub-bundle of $\PP_+ TM$. In the case of causal structures immersed in~$\PP_+ TM$, the assumption that the fibers $\cC_x=\iota^{-1}(x)$ be connected is not desirable. This is due to the fact that $\cC_x$ should be thought of as the light cones which are comprised of the future and past light cones whose projectivizations are disjoint. Thus, one expects $\cC_x=\cC^+_x\sqcup\cC^-_x$. This way, in analogy with Finsler geometry, a cone structure is called \emph{reversible} if $\widehat\cC=-\widehat\cC$. Otherwise, it is called \emph{non-reversible}. It should be noted that among oriented causal structures, non-reversible ones can be of interest in certain problems due to possible differences in their future and past light cones. This is similar to the case of non-reversible Finsler structures containing the important class of Randers spaces~\cite{BRS-Zermelo}. In~\cite{KP-causal}, Kronheimer and Penrose use the term \emph{self-dual} for a~reversible causal structure. This is due to the fact that they define the dual of a causal structure to be the causal structure obtained from replacing all the causal relations between the points of the space by their inverses, i.e., interchanging the words ``future'' and ``past''.
\end{rmk}

\subsubsection{A dual formulation of causal structures}\label{sec:legendre-transf}

In the context of geometric control theory, geometric structures arising from distributions are typically formulated in the cotangent bundle~\cite{AZ-JacobiCurves}. Firstly, the definition of a Legendre transformation is recalled.
\begin{df} Given a sufficiently smooth real-valued function $L$, usually referred to as a~Lag\-rangian, on $TM$, the associated Legendre transformation $\Leg\colon TM\rightarrow T^*M$, is defined by
\begin{gather*}\Leg=\hat\pi\circ \exd_VL,\end{gather*}
where $\exd_VL$ defines a section of the product bundle $ TM\times T^*M$, with the projection map $\hat\pi\colon TM\times T^*M\ra T^*M$. In local coordinates, $\Leg(x^i;y^i)=(x^i;p_i)$ where $p_i=\frac{\partial L}{\partial y^i}$, and $(p_0,\dots,p_n)$ are local coordinates for the fibers of~$T^*M$.
\end{df}

 The Legendre transformation is a local diffeomorphism provided that $\Leg_*=\frac{\partial^2 L}{\partial y^i\partial y^j}$ is non-degenerate everywhere. The inverse Legendre transformation $\Leg^{-1}$ can be obtained by the same procedure using the Hamiltonian function $Q:=\cA\circ\Leg^{-1}$ where $\cA:=\exd L(\underline A)-L$ with~$\underline A$ being the Liouville vector field, i.e., $\cA=y^i\frac{\partial L}{\partial y^i}-L$ in local coordinates. When the degree of homogeneity of $L$ is $r\neq 1$, Euler's theorem gives $Q=(r-1)L\circ\Leg^{-1}$.

Changing the defining function to $L'(x,y)=S(x,y)L(x,y)$, the function $\Leg$ remains invariant over $\cC$, hence the vanishing set of $Q$ in $T^*M$ remains invariant.
Consequently, one can equivalently study the \textit{dual causal structure} $\cC^*:=\Leg(\cC)\subset \PP T^*M$ whose fibers $\cC^*_x\subset\PP T^*_xM$ are the projective dual of the projective hypersurfaces $\cC_x\subset \PP T_xM$.

Via the Legendre transformation the projective Hilbert form $\omega^0=\partial_\uni L\,\exd x^i$ is the pull-back of the canonical contact form $\lambda=p_i\exd x^i$ restricted to $\cC^*\subset \PP T^*M$, i.e., \begin{gather*}\Leg^*\big(p_i\exd x^i\big)=\frac{\partial L}{\partial y^i}(x,y)\exd x^i.\end{gather*}
Consequently, the coframe adaptation can be carried out using $\lambda$ for the dual cone structure. Hence, this dual formulation can be taken as the starting point for studying causal structures.

\subsubsection{Characteristic curves and a quasi-Legendrian foliation}\label{sec:char-curv}
A causal structure comes with a distinguished set of curves which can be defined using the quasi-contact nature of $\cC$. Recall from Example~\ref{exa:contact-quasi-contact} that a quasi-contact structure (or even contact structure) on a $2n$-dimensional manifolds was defined locally in terms of a 1-form of rank $2n-1$. As a result, there is a unique direction along which $\omega^0$ and $\exd\omega^0$ are degenerate, i.e., $[\bfv]\subset T\PP TM$ such that $\bfv\im \omega^0=0$ and $\bfv\im\exd\omega^0=0$, which is called the \emph{characteristic field} of the quasi-contact structure and its integral curves are called \emph{characteristic curves} (cf.~\cite{HsuCalculus92}). A~section of the characteristic field is called a \emph{characteristic vector field}.

 In local coordinates using the defining function $F$ in~\eqref{eq:ConeStr-DefGraph}, one obtains that the vector field
\begin{gather*}\textstyle F\frac{\partial}{\partial x^0}+y^a\frac{\partial}{\partial x^a}+\frac{\partial}{\partial x^n}+ A^a\frac{\partial}{\partial y^a},\end{gather*}
where
\begin{gather*}A^b\partial_\una\partial_\unb F=-F\partial_0\partial_\una F-y^b\partial_b\partial_\una F+\partial_aF+\partial_0F\partial_\una F-\partial_\una\partial_n F\end{gather*}
represents a characteristic vector field on $\cC$.

\begin{df} Let $M$ be a $2n$-dimensional manifold with a quasi-contact 1-form $\omega\in\Gamma(T^*M)$. A submanifold $N\subset M$ is called \emph{quasi-Legendrian} if $T_xN\in \Ker\omega$ and $T_xN\im\exd\omega=0$ for all $x\in N$, with the property that $T_xN$ is transversal to the characteristic field of $\omega$.
\end{df}

 The bundle $\cC$ is $2n$-dimensional and $\omega^0\in T^*\cC$ has maximal rank $2n-1$, therefore, $\cC$~has a~quasi-contact structure. The fibers $\cC_x$ form a quasi-Legendrian foliation of $\cC$ since the vertical tangent spaces $T_{y}\cC_x$ are annihilated by $\cI_{\mathsf{bas}}=\{\omega^0,\dots,\omega^n\}$.

Moreover, using \eqref{eq:domega0}, the characteristic field of $\omega^0$ is the kernel of the Pfaffian system
$\cI_{\mathsf{char}}=\{\omega^0,\dots,\omega^{n-1},\theta_1,\dots,\theta_{n-1}\}$, which justifies the subscript $\mathsf{char}$.
\begin{rmk} Recall that the geodesics of a Finsler structure are defined as the projection of integral curves of the Reeb vector field associated to the contact structure on the indicatrix bundle induced by the Hilbert form~\cite{BryantRemarksFinsler}. Here, the projection of the characteristic curves gives an analogue of null geodesics in pseudo-conformal structures.
\end{rmk}

\subsubsection{Projective second fundamental form and the second coframe adaptation}\label{sec:2-adapted-coframe}
The next coframe adaptation involves the projective second fundamental form of the fibers $\cC_x$.

Let $\exd\omega^a\equiv -h^{ab}\theta_b\w\omega^n$ modulo $I\{\omega^0,\dots,\omega^{n-1}\}$. Using the identity $\exd^2\omega^0=0$, it follows that
\begin{gather*}h^{ab}\theta_a\w\theta_b\w\omega^n=0,\end{gather*}
i.e., $h^{ab}=h^{ba}$.
Using the local expressions of $\omega^a$ in \eqref{eq:AdaptedCoframe01}, one obtains that
\begin{gather*}\exd_V\omega^a\equiv -\exd y^a\w\omega^n \qquad \operatorname{mod} \ I\big\{\omega^0,\dots,\omega^{n-1}\big\}.\end{gather*}
 Recall from \eqref{eq:d_v-omega0} that $\exd y^a=h^{ab}\theta^0{}_b$ where $(h^{ab})=(\partial_\una\partial_\unb F)^{-1}$.

Using the structure equations \eqref{eq:StrEqns-01}, the infinitesimal action of the structure group $G_1$ on~$h^{ab}$ is obtained from the identity $\exd^2\omega^a=0$, and is given by
\begin{gather} \label{eq:InfGpAction-hab}
\exd h^{ab}+h^{ac}\psi^{b}{}_{c}+h^{bc}\psi^a{}_c-2h^{ab}(\phi_0+\phi_\n) \equiv 0 \qquad \operatorname{mod} \ \Itot.
\end{gather}

The relation above implies that the functions $h^{ab}(p,g_0)$ and $\tilde h^{ab}(p,g_1)$ where $g_1=g g_0$, with $g\in G_1$ represented by the matrix \eqref{eq:G1}, are related by
\begin{gather}
 \label{eq:FullGpAction-hab}
\tilde h^{ab}= {\bfa^2\bfb^2}(\bfA)^{-1}{}^a{}_{c}(\bfA)^{-1}{}^b{}_dh^{cd}.
\end{gather}
It follows that the bilinear form $\phi_2=h^{ab}\theta_a\circ\theta_b$ when pulled-back to the fibers $\cC_x$ coincides with its projective second fundamental form (see \cite{AG-Projective, SasakiBook}).

Note that by definition \ref{def:causal-str-def}, tangential non-degeneracy of the fibers $\cC_x$ implies that $h^{ab}$ is non-degenerate everywhere in $\cC$. As a result, the signature of $h^{ab}$, as a well-defined tensor over~$\cC$, remains the same everywhere.

Assuming that $h^{ab}$ has signature $(p,q)$, the relation~\eqref{eq:FullGpAction-hab} implies that $\bfA^a{}_b$ can be chosen so that $h^{ab}$ is normalized to~$\ve^{ab}$. By restricting the $G_1$-bundle to the coframes for which $h^{ab}=\varepsilon^{ab}$, and using the relation~\eqref{eq:InfGpAction-hab}, it follows that
\begin{gather}\label{eq:psi-phi-1-adaptation}
\varepsilon^{ac}\phi^b{}_c+\varepsilon^{bc}\phi^a{}_c\equiv 0 \qquad \operatorname{mod} \ \Itot,
\qquad\mathrm{where}\quad \phi^a{}_b=\psi^a{}_b-(\phi_0+\phi_\n)\delta^a{}_b.
\end{gather}
As a result, for such coframes one can replace $\psi^a{}_b$ by $\phi^a{}_b+ (\phi_0+\phi_\n)\delta^a{}_b$ where the matrix-valued 1-forms $\phi^a{}_b$ take values in $\orth(p,q)$ modulo $\Itot$

Coframings with normalized $h^{ab}$ are \textit{$2$-adapted}, and they are local sections of a $G_2$-bundle over $\cC$ where
\begin{gather*} %\label{eq:str-Group-G2-2-adapted}
G_2=\left\{%\def\arraystretch{1.0}
\begin{pmatrix}
 \bfa^2 & 0 & 0 & 0 \\
 \bfE & \bfa\bfb\bfA & 0 & 0 \\
 \bfe & \bfB & \bfb^2 & 0 \\
 \bfC & \bfA\bfS & 0 & \frac{\bfa}{\bfb} \bfA
\end{pmatrix} \vl
\begin{array}{@{}l@{}}
 \bfA\in \Orth(p,q),\quad \bfa, \bfb \in \RR\backslash\{0\}, \\
 {}^t\bfB, \bfC, \bfE\in \RR^{n-1},\quad \bfe\in \RR,\\
 \bfS\in M_{n-1}(\RR),\quad \bfS= {}^t \bfS
\end{array}
 \right\}.
\end{gather*}

\subsubsection{The third coframe adaptation}\label{sec:3-adapted-coframe}
The 1-forms $\phi^a{}_b$ are $\orth(p,q)$-valued, i.e., $\phi_{ab}+\phi_{ba}\equiv 0$ modulo $\Itot$, where $\phi_{ab}=\varepsilon_{ac}\phi^c{}_b$. From~\eqref{eq:psi-phi-1-adaptation} it follows that
\begin{gather} \label{eq:F-abc-E-aic}
 \psi_{ab}=\phi_{ba}+(\phi_0+\phi_n)\delta_{ab}+E_{aib}\omega^i+F_a{}^{c}{}_b\theta_c,
\end{gather}
 where $E_{aib}=E_{bia}$ and $F_a{}^{c}{}_b=F_b{}^{c}{}_a$. Hence, with respect to 2-adapted coframes, the structure equations are
 \begin{subequations} \label{eq:StrEq-2-adapted-A}
\begin{gather}
 \exd\omega^0= -2\phi_0\w\omega^0-\theta_a\w\omega^a,\label{2-adapted-1}\\
 \exd\omega^a= -\gamma^a{}_0\w\omega^0-\phi^a{}_b\w\omega^b-(\phi_0 +\phi_\n)\w\omega^a -\ve^{ab} \theta_b\w\omega^{\n}\nonumber\\
\hphantom{\exd\omega^a=}{} -E^a{}_{ic} \omega^i\w\omega^c-F^{ab}{}_c \theta_b\w\omega^c,\label{2-adapted-2}\\
 \exd\omega^{\n} = -\gamma^n{}_0\w\omega^0-\gamma^n{}_a\w\omega^a-2\phi_{\n}\w\omega^{\n},\label{2-adapted-3}\\
 \exd\theta_a= -\pi_a\w\omega^0-\pi_{ab}\w\omega^b+\phi^b{}_a\w\theta_b -(\phi_0-\phi_\n)\w\theta_a +\sigma_a\w\omega^\n+\sigma^b{}_a\w\theta_b ,\label{2-adapted-4}
\end{gather}
 \end{subequations}
where $\ve^{ab}$ is used to raise the indices in $E_{aib}$ and $F_a{}^{c}{}_b$. By replacing
\begin{gather*}
 \phi^a{}_b \lmt \phi^a{}_b+\frac{1}{2}\big( E^a{}_{bc} \omega^c-\ve^{ae}E^{d}{}_{ec}\varepsilon_{da} \omega^c+\varepsilon_{cd}E^c{}_{ab} \omega^d\big),\\
\phi_n \lmt \phi_n+E^b{}_{nb} \omega^{\n},\qquad\gamma^a{}_0\lmt \gamma^a{}_0+E^a{}_{0b} \omega^b,
\end{gather*}
it can be assumed that $E^a{}_{bc}$ and $E^a{}_{0c}$ vanish, and $E^a{}_{nb}$ is trace-free.

Differentiating $\exd\omega^a$, and collecting the terms of the form $\tau^a{}_b\w\omega^b\w\omega^n$ for some 1-form sa\-tisfying $\tau_{ab}=\tau_{ba}$ and $\tau^a{}_a\equiv 0$ modulo $\Itot$, one obtains
\begin{gather*}
 \big[\big(\exd E_{anb}+2E_{anb}\phi_n+2E_{cn(a}\phi^c{}_{b)} +\pi_{ab}\big)\w\omega^n-\exd(\phi_0+\phi_n)\ve_{ab}\big]\w\omega^b = 0,
\end{gather*}
where $E_{anb}=\varepsilon_{ac}E^c{}_{nb}$. Contracting the above equations with $\ve^{ab}$, and using the properties $E_{anb}=E_{bna}, E^a{}_{na}=0$, and $\phi_{ab}=-\phi_{ba}$, it follows that $\exd(\phi_0+\phi_n)\equiv \ts\frac{1}{n-1}\pi_{ab}\ve^{ab}\w\omega^n$ modulo~$\Ichar$. As a result, the infinitesimal action of the structure group on~$E^a{}_{nb}$ is given by
\begin{gather} \label{eq:E_aib-InfGpAct}
 \exd E_{anb}+2E_{anb}\phi_n+2E_{cn(a}\phi^c{}_{b)} +\big(\pi_{ab}-\ts\frac{1}{n-1}\pi_{cd}\ve^{cd}\ve_{ab}\big)\equiv 0 \qquad \operatorname{mod} \ \Itot.
\end{gather}

Similarly to Section~\ref{sec:2-adapted-coframe}, it is possible to restrict to coframes with vanishing $E^a{}_{nb}$, and consequently, by relation \eqref{eq:E_aib-InfGpAct}, the 1-forms $\pi_{ab}$ are pure trace modulo $\Itot$, i.e., $\pi_{ab}=\pi_n\ve_{ab}$, modulo $\Itot$, for some 1-form $\pi_n$. Such coframes are called \textit{3-adapted}.

 The 3-adapted coframes are local sections of a $G_3$-bundle, where $G_3$ consists of matrices
 \begin{gather*}% \label{eq:G3-MatrixForm}
 \left\{ \begin{pmatrix}
 \bfa^2 & 0 & 0 & 0 \\
 \bfE & \bfa\bfb\bfA & 0 & 0 \\
 \bfe & \bfB & \bfb^2 & 0 \\
 \bfC & \bfc{}^t\!\bfA\varepsilon & 0 & \frac{\bfa}{\bfb}{}^t\!\bfA
\end{pmatrix}
 \vl
\begin{array}{@{}l@{}}
 \bfA\in \Orth(p,q),\quad \bfa, \bfb \in \RR\backslash\{0\}, \\
 {}^t\bfB, \bfC, \bfE\in \RR^{n-1},\quad \bfc,\bfe\in \RR
\end{array}\right\}.
 \end{gather*}

\subsubsection[Symmetries of $F^{abc}$]{Symmetries of $\boldsymbol{F^{abc}}$}
In order to show $F^{abc}$ is completely symmetric, one first differentiates equation \eqref{2-adapted-2} to obtain
\begin{gather*}0\equiv \varepsilon^{ab}\big(\exd\theta_b-\phi^c{}_b\w\theta_c+(\phi_0-\phi_n)\theta_b +F^{cd}{}_b \theta_d\w\theta_c\big)\w\omega^\n \qquad \operatorname{mod} \ \omega^0, \ldots, \omega^{n-1}.\end{gather*}
Differentiating equation \eqref{2-adapted-1} yields,
\begin{gather*}0=\big(\exd\theta_c-\phi^a{}_c\w\theta_a+(\phi_0-\phi_\n)\theta_c +F^{ab}{}_c \theta_a\w\theta_b\big)\w\omega^c \qquad \operatorname{mod} \ I\big\{\omega^0,\omega^\n\big\}.\end{gather*}
Comparing the two equations above, one obtains, $F^{ab}{}_c\,\theta_a\w\theta_b=0$, i.e., $F^{ab}{}_c=F^{ba}{}_c$. Recall, in~\eqref{eq:F-abc-E-aic} the quantities $F^{ab}{}_c$ satisfy $F^{abc}=F^{cba}$, where $F^{abc}=\ve^{cd}F^{ab}{}_d$. As a result, $F^{abc}$ is totally symmetric.

\subsubsection{The fourth coframe adaptation, Fubini cubic form and the \wsf curvature} \label{sec:conf-flag-curv-fourth-order}
The next coframe adaptation reveals the essential invariants of a causal structure, which are the Fubini cubic forms of the fibers~$\cC_x$ and the Weyl shadow flag curvature (also referred to as the \wsf curvature).

Differentiating \eqref{2-adapted-2} and collecting terms of the form $\tau^{ab}{}_c\w\theta_b\w\omega^c$ satisfying $\tau_a{}^{b}{}_c=\tau_c{}^{b}{}_a$, one has
\begin{gather}
0\equiv \big[\big(\exd F^{abc}+F^{abd}\phi^c{}_d+F^{adc}\phi^b{}_d+F^{dbc}\phi^a{}_d-F^{abc}(\phi_0-\phi_n) \nonumber\\
\hphantom{0\equiv}{} -\big(\gamma^n{}_d\varepsilon^{dc}\varepsilon^{ab}-\gamma^{a}{}_0\ve^{bc} \big)\big)\w\theta_b -\exd(\phi_0+\phi_n)\ve^{ac} \big]\w\omega_c \qquad \operatorname{mod} \ \omega^0,\omega^n.\label{eq:InfGpAction-F-abc-01}
\end{gather}

Contacting by $\ve_{ab}$, and defining
\begin{gather*}F^a:=\varepsilon_{bc}F^{abc},\end{gather*}
 one arrives at
\begin{gather*}\exd(\phi_0+\phi_n)=\ts\frac{1}{n-1}\big(\gamma^b{}_0-\gamma^n{}_d\ve^{db}+\exd F^b\big)\w\theta_b.\end{gather*}
Putting this back in \eqref{eq:InfGpAction-F-abc-01}, and contracting with $\ve_{bc}$ gives
\begin{gather*}
\exd F^a \equiv -F^b\phi^a{}_b+F^a(\phi_0-\phi_n)-\ts\frac{n(n-2)}{n-1}\big(\gamma^a{}_0-\gamma^n{}_b\varepsilon^{ba}\big) \qquad \operatorname{mod} \ \Itot. \end{gather*}
As before, the infinitesimal action above makes it possible to translate $F^a$ to zero. Therefore, the class of \textit{$4$-adapted} coframes can be defined by the \textit{apolarity relation}
\begin{gather} \label{eq:apolarity-causal}
 F^a=0.
\end{gather}
 The resulting reduction in the structure group is realized via the relation $\gamma^a{}_0\equiv\varepsilon^{ab}\gamma^n{}_b$ mo\-du\-lo~$\Itot$.

It can be shown that the cubic form
\begin{gather*}F_3=F^{abc}\theta_a\circ\theta_b\circ\theta_c,\end{gather*}
 when pulled-back to the fiber $\cC_x$, coincides with Fubini cubic form of $\cC_x$ (see \cite{AG-Projective, SasakiBook}). This becomes evident in the derivation of the local expression of~$F^{abc}$ in~\eqref{eq:explicit-Fubini-cubic-form}.

 It is noted that because the second fundamental form is normalized to~$\ve_{ab}$, for any choice of 4-adapted coframe, the Fubini cubic form is well-defined tensor on~$\cC$.

The structure group $G_4$ is reduced to the group of matrices of the form
 \begin{gather*} %\label{eq:G4-MatrixForm}
 \left\{ \begin{pmatrix}
 \bfa^2 & 0 & 0 & 0 \\
 \bfE & \bfa\bfb\bfA & 0 & 0 \\
 \bfe & \bfB & \bfb^2 & 0 \\
 \bfC & \bfc{}^t \bfA\varepsilon & 0 & \frac{\bfa}{\bfb}{}^t\!\bfA
\end{pmatrix}
 \vl
\begin{array}{@{}l@{}}
 \bfA\in \Orth(p,q),\quad \bfa, \bfb \in \RR\backslash\{0\}, \\
 {}^t\bfB, \bfC \in \RR^{n-1},\quad \bfc,\bfe\in \RR,\\
 \bfE=\frac{a}{b}\bfA\,{}^t \bfB
\end{array}\right\},
 \end{gather*}
 where ${}^t\bfB$ denotes the transpose of $\bfB$ with respect to $\ve$.

By setting $\gamma^a{}_0-\varepsilon^{ab}\gamma^n{}_b=K^a{}_{i0}\,\omega^i+K^{ab}\theta_b$, the structure equations after the fourth coframe adaptation are of the form
 \begin{subequations} \label{eq:StrEq-4-adapted}
\begin{gather}
 \exd\omega^0= -2\phi_0\w\omega^0-\theta_a\w\omega^a,\label{4-adapted-1}\\
 \exd\omega^a= -\ve^{ab} \gamma_b\w\omega^0-\phi^a{}_b\w\omega^b-(\phi_0 +\phi_n)\w\omega^a -\varepsilon^{ab} \theta_b\w\omega^{n}\nonumber\\
\hphantom{\exd\omega^a=}{} -K^a{}_{i0} \omega^i\w\omega^0-K^{ab} \theta_b\w\omega^0-F^{ab}{}_c \theta_b\w\omega^c,\label{4-adapted-2}\\
 \exd\omega^{n} = -\gamma^n{}_0\w\omega^0-\gamma_a\w\omega^a-2\phi_{n}\w\omega^{n},\label{4-adapted-3}\\
 \exd\theta_a= -\pi_a\w\omega^0-\ve_{ab} \pi_{n}\w\omega^b+\phi^b{}_a\w\theta_b -(\phi_0-\phi_n)\w\theta_a+A^b{}_a\theta_b\w\omega^\n \nonumber\\
 \hphantom{\exd\theta_a=}{}+W_{anbn}\omega^b\w\omega^\n+B^{bc}{}_a\theta_b\w\theta_c +E_{an}{}^{c}{}_{b}\theta_c\w\omega^b +W_{anbc}\omega^b\w\omega^c,\label{4-adapted-4}
\end{gather}
 \end{subequations}
where $\gamma^n{}_a$ is being replaced by $\gamma_a$.

Differentiating \eqref{4-adapted-1} yields
\begin{gather*}
A^a{}_b{}=0, \qquad W_{anbn}{}=W_{bnan},\qquad E_{anbc}{}=E_{ancb},\\ K^{ab}=K^{ba},\qquad B^{ab}{}_c=0, \qquad W_{[a|n|bc]} =0.
\end{gather*}
By making the following changes
\begin{gather*}
 \gamma_a\lmt\gamma_a-K_{cn0} \omega^n-K_{(ab)0}\omega^a,\qquad \phi_n\lmt\phi_n-K_{cn0}\,\omega^c,\\
\phi^a_b\lmt\phi^a_b +\half\big( T^a{}_{bc} \omega^c-\varepsilon^{ae}T^{d}{}_{ec}\varepsilon_{da} \omega^c +\varepsilon_{cd}T^c{}_{ab} \omega^d\big) +\half \big( K^a{}_b-\ve^{ac}K^d{}_c\ve_{eb}\big) \omega^0,
\end{gather*}
where $T^a{}_{bc}=\delta^a{}_bK_{cn0}-\delta^a{}_cK_{bn0}$, and $K_{an0}=\ve_{ab}K^b{}_{n0}$, it can be assumed that $K^a{}_{i0}$ vanishes.

By replacing \begin{gather*}\pi_n\lmt\pi_n+ \ts\frac{1}{n-1}\ve^{ac}E_{anbc}\,\theta^b-\nfrac\ve^{cd}W_{cndn}\,\omega^n,\end{gather*} the traces of $W_{anbn}$ and $E^a{}_{nbc}$ are absorbed in $\pi_n$, and therefore, $W_{anbn}$ and $E_{ancb}$ can be chosen to be trace-free in $(a,b)$.

The symmetric trace-free tensor $W_{anbn}$ is called the \textit{Weyl shadow flag curvature} (the \wsf curvature), which, as will be shown, is the second essential invariant of causal structures.

 By differentiating \eqref{4-adapted-2}, and collecting 3-forms $A_{ab}\,\theta^a\w\omega^b\w\omega^n$ such that $A_{ab}=A_{ba}$, and $A^a{}_a=0$, it follows that
\begin{gather*}E_{anbc}=-f_{abc},\end{gather*}
where $f_{abc}=F_{abc;n}$, i.e., the derivative of $F_{abc}$ along the characteristic curves.

\subsubsection{The fifth coframe adaptation}\label{sec:5-adapted-coframe}

There is one more coframe adaptation that can be imposed on any causal structure, which reduces the structure group by one dimension. First, note that the derivative of \eqref{4-adapted-2} gives
\begin{gather} \label{eq:InfActionK-ab-01}
\big(\big(\exd K^{ab}+2K^{c(a}\phi^{b)}{}_c-2K^{ab}(\phi_0-\phi_n )-\ve^{ab}\gamma^n{}_0 -F^{abc}\gamma_c\big)\w\theta_b+\exd\gamma^a\big) \w\omega^0 \equiv 0
\end{gather}
modulo $ I\{\omega^1,\dots,\omega^n\}$. To find the infinitesimal group action on $K^{ab}$ via the relation above one needs to find the 2-form components $\nu^{ab}\w\theta_b$ of $\exd\gamma^a$ where $\nu^{ab}\not\equiv 0$ modulo $\Itot$. Differentia\-ting~\eqref{4-adapted-3}, it follows that
\begin{gather*}\big(\exd\gamma_a+F^{bc}{}_a\gamma_b\w\theta_c+\Lambda_a\big)\w\omega^a\equiv 0,\end{gather*}
where $\Lambda_a$ is of the form $\nu^b{}_a\w\theta_b$ with $\nu^b{}_a\equiv 0$ modulo $\Itot$. Inserting the above expression of $\exd\gamma_a$ into~\eqref{eq:InfActionK-ab-01}, one sees that the infinitesimal action of the structure group $G_5$ on $K^{ab}$ is given by
\begin{gather*}\exd K^{ab}+2K^{c(a}\phi^{b)}{}_c-2K^{ab}(\phi_0+\phi_n)+\ve^{ab}\gamma^n{}_0 -2F^{abc}\gamma_c \equiv 0 \qquad \operatorname{mod} \ \Itot.\end{gather*}
Note that $K^{ab}$ is not a tensor as its infinitesimal transformation rule in terms of the structure group involves~$F^{ab}{}_c$.

Using the apolarity relation \eqref{eq:apolarity-causal}, and the fact that $K_{ab}$ and $\phi_{ab}$ are symmetric and skew-symmetric respectively, the infinitesimal action of the structure group $G_4$ on $K:=\ve_{ab}K^{ab}$ is found to be
\begin{gather*}\exd K+K(\phi_0-\phi_n)+(n-1)\gamma^n{}_0\equiv 0 \qquad \operatorname{mod} \ \Itot.\end{gather*}

Now, the \textit{$5$-adapted} coframes can be defined by the property that $K=0$. Hence, one obtains
 \begin{gather*}\gamma^0{}_n=M_{ni} \omega^i+L^a \theta_a.\end{gather*}
By replacing
\begin{gather*}\gamma_a\lmt\gamma_a-M_{na}\,\omega^0-\ve_{ab}M_{nn}\,\omega^b,\qquad \phi_n\lmt\phi_n-\half M_{nn}\,\omega^0,\end{gather*}
it can be assumed that $M_{ni}=0$.

The 5-adapted coframes are local sections of a $G_5$-bundle where $G_5$ is the matrix group defined by
\begin{gather} \label{eq:G5-StrGroup}
\left\{\begin{pmatrix}
 \bfa^2 & 0 & 0 & 0 \\
 \bfE & \bfa\bfb\bfA & 0 & 0 \\
 \bfe & \bfB & \bfb^2 & 0 \\
 \bfC & \bfc\,{}^t\!\bfA\varepsilon & 0 & \frac{\bfa}{\bfb}\,{}^t\!\bfA
\end{pmatrix}\vl
\begin{array}{@{}l@{}}
 \bfa, \bfb\in \RR\backslash\{0\},\quad \bfc\in \RR, \\
 \bfA\in \Orth(p,q),\quad {}^t\bfB, \bfC\in\RR^{n-1}, \\
 \bfE=\frac{\bfa}{\bfb}\bfA\,{}^t\bfB,\quad \bfe=\frac{1}{2\bfb^2}\bfB\,{}^t\bfB
\end{array}
 \right\}.
\end{gather}

\subsubsection{Invariant conformal quadratic form}\label{sec:conf-class-quadr-1}
According to \eqref{eq:G5-StrGroup}, the structure group $G_5$ preserves the conformal class of the quadratic form
\begin{gather} \label{eq:HorizQuadraticFrom}
 g:= 2 \omega^0\circ\omega^n-\ve_{ab} \omega^a\circ\omega^b,
\end{gather}
which is semi-basic with respect to the fibration $\pi\colon \cC\ra M$. At $p=(x;[y])\in \cC$ the null cone of~$g_p$ in~$T_xM$ gives the osculating quadric of second order for $\cC_x$ at $[y]\in\cC_x$.

As will be shown in Section~\ref{sec:conformal-structures}, the vanishing of the Fubini form implies that the conformal class of~$g$ is well-defined on~$M$ and that the null cones of the causal structure coincide with those of~$g$. In that case, the $\orth(p,q)$-valued part of the associated conformal connection is given by
 \begin{gather*}
 \begin{pmatrix}
 \phi_n-\phi_0 & \theta & 0\\
 {}^t\!\gamma & \phi & {}^t \theta\\
 0 & \gamma & \phi_0-\phi_n
 \end{pmatrix},
\end{gather*}
 where ${}^t\!\gamma$ and ${}^t\!\theta$ are column vectors of 1-forms $\gamma^a:=\ve^{ab}\gamma_b$ and $ \theta^a:=\ve^{ab}\theta_b$, respectively. From now on, $\ve_{ab}$ will be used to raise and lower the indices $a$, $b$, $c$.

\subsubsection{ The first order structure equations}\label{sec:first-order-struct}
Having carried out five coframe adaptations, the first structure equations for causal structures can now be expressed as follows.

The pseudo-connection form for the first order structure group is
\begin{gather} \label{eq:MC-Forms-first-order-str-eqns}% \def\arraystretch{1.4}
 \begin{pmatrix}
2\phi_0 & 0 & 0 & 0 \\
\gamma^{a} & \phi^a{}_b+\left(\phi_0+\phi_{n}\right)\delta^a{}_b & 0 & 0 \\
0 & \gamma_b & 2\phi_{\n} & 0\\
\pi_a & \ve_{ab} \pi_{n} & 0 & -\phi^a{}_b+ (\phi_0-\phi_{\n} )\delta^a{}_b
\end{pmatrix}.
\end{gather}

Consequently, the structure equations can be written as
\begin{subequations}\label{eq:1st-order-str-eqns}
 \begin{gather}
\exd\omega^0 = -2\phi_0 \w\omega^0-{} \theta_a\w\omega^a,\label{1st-causal-I}\\
\exd\omega^a = -\gamma^a \w\omega^0- \phi^a{}_b \w\omega^b- (\phi_0+\phi_{\n} ) \w\omega^a-\theta^a\w\omega^{\n}\nonumber\\
\hphantom{\exd\omega^a =}{} -K^{a}{}_b \theta^{b}\w\omega^0-F^{a}{}_{bc} \theta^{b}\w\omega^c,\label{1st-causal-II}\\
\exd\omega^{\n} = -\gamma_a\w\omega^a-2\phi_{\n}\w\omega^{\n}- L^a \theta_a\w\omega^0,\label{1st-causal-III}\\
\exd\theta_a = -\pi_a\w\omega^0-\pi_{\n}\w\omega_a+\phi^b{}_a \w\theta_b-\left(\phi_0-\phi_{\n}\right)\w\theta_a\nonumber\\
\hphantom{\exd\theta_a =}{} - f_{abc} \theta^{b}\w\omega^c+ W_{anbn} \omega^b\w\omega^{\n}+\half W_{anbc} \omega^b\w\omega^c,\label{1st-causal-IV}
 \end{gather}
\end{subequations}
with the following symmetries
\begin{gather}
F_{abc}=F_{(abc)},\qquad F^{a}{}_{ab}=0,\qquad K^{ab}=K^{ba},\qquad K^a{}_a=0,\qquad f_{abc}=F_{abc;n},\nonumber\\
 W_{anbn}=W_{bnan},\qquad W^a{}_{nan}=0,\qquad \quad W_{anbc}=-W_{0acb},\qquad W_{[a|n|bc]}=0.\label{eq:symmetris-of-1st-order-StrEqns}
\end{gather}

The infinitesimal group action on the torsion coefficients is equivalent to saying that the following 1-forms are semi-basic with respect to the fibration $\pi\colon \cC\ra M$:
\begin{subequations}\label{eq:infinit-gp-action-1st-order-torsions}
\begin{gather}
\D F_{abc} :=\exd F_{abc} -F_{abd}\phi^d{}_c-F_{adc}\phi^d{}_b -F_{dbc}\phi^d{}_a-F_{abc}(\phi_0-\phi_\n),\label{infin-I}\\
\D K_{ab}:=\exd K_{ab}-K_{ac}\phi^{c}{}_b-K_{bc}\phi^{c}{}_a-2K_{ab}(\phi_0-\phi_\n)-2F_{abc}\gamma^c,\label{infin-II}\\
\D L_a :=\exd L_a-L_b\,\phi^b{}_a-3L^a (\phi_0-\phi_\n) +K^{ab} \gamma_b,\label{infin-III}\\
\D W_{anbn} :=\exd W_{anbn}-W_{ancn} \phi^c{}_{b}-W_{bncn} \phi^c{}_{a}-4W_{anbn} \phi_\n,\label{infin-V}\\
\D W_{anbc} :=\exd W_{anbc}-W_{dnbc} \phi^d{}_a -W_{andc} \phi^d{}_b-W_{anbd} \phi^d{}_c\nonumber\\
\hphantom{\D W_{anbc} :=}{} - W_{anbc} (\phi_0-3\phi_n)+( W_{anbn} \ve_{cd}+ W_{ancn} \ve_{bd})\gamma^d.\label{infin-VI}
\end{gather}
\end{subequations}

By the infinitesimal actions \eqref{eq:infinit-gp-action-1st-order-torsions}, it follows that no more reduction of the structure group is possible, unless certain non-vanishing conditions on the torsion coefficients are assumed. Such special cases are considered in subsequent sections.

\subsubsection[An $\{e\}$-structure]{An $\boldsymbol{\{e\}}$-structure}\label{sec:an-e-structure-1}
Using the first order structure equations, an $\{e\}$-structure can be associated to causal structures. The necessary computations are carried out in Appendix~\ref{cha:append-full-struct}.

\begin{thm}\label{thm:e-structure} To a causal structure $\big(M^{n+1},\cC^{2n}\big)$, $n\geq 4$, one can associate an $\{e\}$-structure on the principal bundle $P_{1,2}\ra\cP\ra\cC$ of dimension $\frac{(n+3)(n+2)}{2}$ where \smash{$P_{1,2}\subset \mathrm{O}(p+2,q+2)$} is the parabolic subgroup defined in \eqref{eq:P_12}. If $n=3$, then $P_{1,2}$ is replaced by $P_{1,2,3}$. The essential invariants of a causal structure are the Fubini cubic form of the fibers and the \wsf curvature. The vanishing of the essential invariants implies that the $\{e\}$-structure coincides with the Maurer--Cartan forms of~$\mathfrak{o}(p+2,q+2)$. The structure equations can be written as
 \begin{gather} \label{eq:e-str-pseudo-conn-form}
 \exd\phi+\phi\w\phi=
 \begin{pmatrix}
 -\Phi_0-\Phi_n & -\Pi_n & -\Pi_b & -\Pi_0 & 0 \\
 \Omega^n & \Phi_n-\Phi_0 & \Gamma_b & 0 & -\Pi_0 \\
 \Omega^a & \Theta^a & \Phi^a{}_b & \Gamma^a &\Pi^a \\
 \Omega^0 & 0 & \Theta_b & \Phi_0-\Phi_n & -\Pi_n\\
 0 & \Omega^0 & -\Omega_b & \Omega^n & \Phi_0+\Phi_n
 \end{pmatrix},
 \end{gather}
 where $\phi$ is defined in \eqref{eq:FlatModelCausal} and the right hand side of \eqref{eq:e-str-pseudo-conn-form} is expressed in~\eqref{eq:full-stru-equns-causal}.
\end{thm}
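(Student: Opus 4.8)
The plan is to resume Cartan's method of equivalence from the first order structure equations \eqref{eq:1st-order-str-eqns}, whose structure group $G_5$ is given in \eqref{eq:G5-StrGroup} and whose pseudo-connection is recorded in \eqref{eq:MC-Forms-first-order-str-eqns}. By the infinitesimal group actions \eqref{eq:infinit-gp-action-1st-order-torsions}, no further reduction of the structure group is available in the generic case, so the remaining work is prolongation: the forms $\phi_0,\phi_n,\phi^a{}_b,\gamma_a,\pi_a,\pi_n$ of \eqref{eq:MC-Forms-first-order-str-eqns} are at this point determined only modulo the semi-basic ideal $\Itot$, and the goal is to pin them down and adjoin the one missing connection form so as to produce the coframing $\phi$ of \eqref{eq:e-str-pseudo-conn-form} on a bundle $\cP$ of dimension $\dim\orth(p+2,q+2)=\tfrac{(n+3)(n+2)}{2}$.

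First I would exploit the integrability condition $\exd^2=0$ applied to each of \eqref{1st-causal-I}--\eqref{1st-causal-IV}. Differentiating these and collecting the semi-basic $3$-forms yields the Bianchi identities that constrain the admissible redefinitions of the pseudo-connection forms; after absorbing every term that can be absorbed, the residual essential torsion is carried by the Fubini cubic form $F_{abc}$ and the \wsf curvature $W_{anbn}$, while the further coefficients $f_{abc}=F_{abc;n}$, $W_{anbc}$, $K_{ab}$ and $L_a$ appearing in \eqref{1st-causal-II}--\eqref{1st-causal-IV} are tied to these two through the $\D$-operator of \eqref{eq:infinit-gp-action-1st-order-torsions}. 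A dimension count gives $\dim G_5=\dim P_{1,2}-1$, the missing direction corresponding to the deepest graded piece $\fg_3\cong\RR$, so exactly one new connection form --- the entry $\Pi_0$ of \eqref{eq:e-str-pseudo-conn-form} --- remains to be adjoined.

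Next comes the prolongation itself, organized by the $|3|$-grading $\fg=\fg_{-3}\oplus\cdots\oplus\fg_3$ of $\fg=\orth(p+2,q+2)$ fixed in Section \ref{sec:flat-model-2}. Treating the fibre parameters of $G_5$ as new coordinates and adjoining the form dual to $\fg_3$, I would differentiate the structure equations one graded layer at a time, each time reading off the $2$-forms congruent to $\nu\w\omega^i$ or $\nu\w\theta_a$ modulo the relevant ideal and solving the resulting linear systems to normalize the connection forms uniquely. The normalization is dictated by the flat model \eqref{eq:FlatModelCausal}: one chooses the corrections so that $\exd\phi+\phi\w\phi$ acquires the canonical shape of \eqref{eq:e-str-pseudo-conn-form}, with every curvature entry expressed through $F_{abc}$, $W_{anbn}$ and their $\D$-derivatives, as recorded explicitly in \eqref{eq:full-stru-equns-causal}. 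For $n=3$ the relevant flag acquires an extra step and a further reduction, producing $P_{1,2,3}$ in place of $P_{1,2}$; otherwise the procedure is uniform in $n\geq 4$.

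Once all entries of $\phi$ are determined, $\phi$ itself is a coframing on $\cP$, i.e.\ an $\{e\}$-structure, and the fibration $P_{1,2}\ra\cP\ra\cC$ follows from the dimension bookkeeping. Inspecting the curvature shows that the only structure functions not removable by normalization are $F_{abc}$ and $W_{anbn}$, which is the assertion that these are the essential invariants; in particular $F_{abc}\equiv 0$ and $W_{anbn}\equiv 0$ force every entry of $\exd\phi+\phi\w\phi$ to vanish, whereupon $\phi$ satisfies the Maurer-Cartan equation of $\orth(p+2,q+2)$ and the fundamental theorem for Maurer-Cartan systems yields local equivalence with the flat model. I expect the principal obstacle to be the prolongation bookkeeping: one must verify that the iterated differentiations are mutually consistent --- that the Bianchi identities close so that each connection form, and in particular the normalizations of $W_{anbc}$, $K_{ab}$ and $L_a$, is genuinely forced --- and confirm that no torsion survives outside the span of the two essential invariants and their derivatives. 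This is a long but essentially mechanical computation, which is why its verification occupies most of the Appendix.
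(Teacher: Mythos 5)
Your proposal follows the same overall route as the paper's Appendix \ref{cha:append-full-struct}: prolong the first order structure equations \eqref{eq:1st-order-str-eqns}, use $\exd^2=0$ together with torsion normalization to pin down the remaining forms, and identify $F_{abc}$ and $W_{anbn}$ as the only surviving invariants. But there is a genuine gap in how you set up the prolongation. You infer from $\dim G_5=\dim P_{1,2}-1$ that exactly one new connection form, $\pi_0$ dual to $\fg_3$, needs to be adjoined, and that $\phi_0,\phi_n,\phi^a{}_b,\gamma_a,\pi_a,\pi_n$ are then fixed by demanding that the curvature take the shape \eqref{eq:e-str-pseudo-conn-form}. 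That is not how the method unfolds, and as stated it is dimensionally inconsistent. What must be adjoined is the \emph{full first prolongation} of $G_5$, which the paper computes in \eqref{eq:parametric-prolonged-group}--\eqref{eq:MC-prolonged-group} to be the $(n+1)$-dimensional group $G^{(1)}_1$: besides the parameter $\bfd$ dual to $\pi_0$, the forms $\pi_n$ and $\pi_a$ retain the ambiguities $\sigma$ and $\tau_a$, i.e. $\Pi_n=-\sigma\w\omega^0+P_n$ and $\Pi_a=-\tau_a\w\omega^0-\sigma\w\omega_a+P_a$. Requiring the canonical shape of the curvature does not by itself remove this $n$-parameter excess, so your step ``solving the resulting linear systems to normalize the connection forms uniquely'' is not yet justified, and your dimension bookkeeping for $\cP$ would be off by $n$.

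The missing idea is the \emph{reduction of the prolonged group} (Section \ref{sec:reduct-prol-group}). The infinitesimal action of $G^{(1)}_1$ on two specific second order torsion coefficients of $\Phi_n$ is inhomogeneous, namely $\exd W_{nn0n}+\half\sigma\equiv 0$ and $\exd W_{nn0a}+\quar\tau_a\equiv 0$ modulo the connection forms, so the normalizations $W_{nn0n}=0$ and $W_{nn0a}=0$ exactly absorb the $\sigma$- and $\tau_a$-freedom and cut $G^{(1)}_1$ down to the one-parameter group $G^{(1)}_2$. One must then check (Section \ref{sec:an-e-structure}) that the remaining parameter $\bfd$ cannot generically normalize any further torsion and that the prolongation of $G^{(1)}_2$ is trivial; only this combination produces the $\{e\}$-structure on the $\frac{(n+3)(n+2)}{2}$-dimensional bundle $\cP$. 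A secondary omission: before prolonging at all, the paper runs the Cartan test (Section \ref{sec:prol-struct-group}), computing the characters and $\dim G^{(1)}_1=n+1<\sum_{j=1}^{n-1} jc_j$, to conclude that \eqref{eq:1st-order-str-eqns} is not involutive; without this check prolongation is not licensed as the terminating step of the equivalence method. Finally, note that the relations tying $K_{ab}$, $L_a$ and $W_{anbc}$ to the two essential invariants come from the Bianchi identities \eqref{eq:causal-bianchi-K} and \eqref{eq:2nd-order-biachies} obtained from $\exd^2=0$, not from the $\D$-operator of \eqref{eq:infinit-gp-action-1st-order-torsions}, which only expresses equivariance.
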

The proof of the above theorem will be continued in Appendix~\ref{cha:append-full-struct} and involves lots of tedious calculations. Of course, it is desirable to find the right change of basis so that the $\{e\}$-structure defines a Cartan connection. This issue is discussed in Section \ref{sec:norm-cart-conn-1}. It is noted the number of torsion elements on the right hand side of the first order structure equations is minimal due to the special absorption process that was undertaken here. When obtaining the $\{e\}$-structure in Appendix~\ref{cha:append-full-struct}, to simplify computations, one term will be added to equation~\eqref{1st-causal-IV} (see Section~\ref{sec:an-e-structure}).

An immediate corollary of Theorem \ref{thm:e-structure} is the following.
\begin{cor}
 The maximal dimension of the symmetry algebra of causal structures on a~\smash{$(n+1)$}-dimensional manifolds is $\frac{(n+3)(n+2)}{2}$ which occurs only in the flat model.
\end{cor}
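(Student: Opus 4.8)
The plan is to derive the corollary from the canonical nature of the $\{e\}$-structure in Theorem~\ref{thm:e-structure} together with the classical rigidity of absolute parallelisms. The construction of $\cP$ from $(M,\cC)$ is functorial, so a local symmetry of the causal structure (a diffeomorphism of $M$ preserving $\cC\subset\PP TM$) prolongs to $\cC$ and then lifts uniquely to a bundle automorphism of $\cP$ preserving the tautological coframe furnished by the entries of \eqref{eq:FlatModelCausal}; conversely every coframe-preserving automorphism of $\cP$ descends. Hence the symmetry algebra $\mathrm{Sym}(M,\cC)$ is isomorphic to the Lie algebra of infinitesimal automorphisms of the $\{e\}$-structure, namely the vector fields $X$ on $\cP$ with $\cL_X\theta=0$ for each coframe $1$-form $\theta$.

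First I would establish the upper bound. Writing the structure equations as $\exd\theta^i=-\tfrac12 c^i{}_{jk}\,\theta^j\w\theta^k$ with structure functions $c^i{}_{jk}$ on $\cP$, the condition $\cL_X\theta^i=0$ expresses every derivative of the components of $X$ (relative to the coframe) algebraically in terms of $X$ itself and the $c^i{}_{jk}$. Thus $X$ solves a determined linear system and is uniquely pinned down by its value at a single point, so the evaluation map $X\mapsto X_p$ is injective and $\dim\mathrm{Sym}(M,\cC)\leq\dim\cP=\tfrac{(n+3)(n+2)}{2}$.

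It remains to treat equality, which is the heart of the matter. If $\dim\mathrm{Sym}(M,\cC)=\dim\cP$ then the evaluation map is an isomorphism at every point, so the local automorphism group acts transitively on $\cP$; since automorphisms preserve the coframe they preserve each $c^i{}_{jk}$, forcing all structure functions---in particular the essential invariants $F_{abc}$ and $W_{anbn}$---to be constant on $\cP$. The crux is that these invariants are \emph{not} fibre-constant unless they vanish: by the transformation laws \eqref{eq:infinit-gp-action-1st-order-torsions}, the term $-F_{abc}(\phi_0-\phi_\n)$ in $\D F_{abc}$ and the term $-4W_{anbn}\phi_\n$ in $\D W_{anbn}$ show that $F_{abc}$ and $W_{anbn}$ scale with nonzero conformal weight along the fibre of $\cP\ra\cC$, so constancy forces $F_{abc}\equiv 0$ and $W_{anbn}\equiv 0$. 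By Theorem~\ref{thm:e-structure} the $\{e\}$-structure then coincides with the Maurer--Cartan forms of $\fo(p+2,q+2)$, i.e.\ the flat model. Conversely the flat model attains the bound, its symmetry algebra being $\fo(p+2,q+2)$ of dimension $\binom{n+3}{2}=\tfrac{(n+3)(n+2)}{2}$ since $p+q=n-1$. The main obstacle is exactly this upgrade from constancy to vanishing: local transitivity alone only yields constant invariants, and it is the weighted fibre action recorded in \eqref{eq:infinit-gp-action-1st-order-torsions} that rules out homogeneous but non-flat maxima and thereby identifies the extremal case uniquely with the flat model.
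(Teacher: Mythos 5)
Your proof is correct and follows essentially the same route as the paper: both arguments hinge on the fact that, by the infinitesimal actions \eqref{eq:infinit-gp-action-1st-order-torsions}, the essential invariants $F_{abc}$ and $W_{anbn}$ scale with nonzero weight along the structure-group directions, so maximal symmetry forces them to vanish and Theorem \ref{thm:e-structure} then identifies the structure with the flat model. The only difference is presentational — you make explicit the standard $\{e\}$-structure dimension bound and phrase the key step as ``transitivity $\Rightarrow$ constancy $\Rightarrow$ vanishing,'' whereas the paper phrases it as ``a nonzero entry could be normalized, reducing the symmetry group to a proper subgroup'' — but these are the same mechanism.
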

\begin{proof}
 As explained in Section \ref{sec:flat-model-2} the flat model has the maximal symmetries. Now, suppose there is non-flat model that is maximally symmetric. Then, either the Fubini form or the \wsf have a non-zero entry. However, by the infinitesimal group actions \eqref{eq:infinit-gp-action-1st-order-torsions}, the structure group acts by scaling on both of the essential invariants. As a result, the symmetry group of the causal structure has to be a proper sub-group of $\mathrm{O}(p+1,q+1)$ obtained after normalizing the value of that non-zero entry of the essential invariants. Thus, it cannot be maximally symmetric.
\end{proof}

The corollary above is clear from the parabolic point of view, since any parabolic geo\-met\-ry~$(G,P)$ has maximal symmetry dimension equal to $\dim G$ which can only be reached in the flat case~\cite{CS-Parabolic}.

\subsubsection{The case of conformal structures}\label{sec:conformal-structures}
Using the first order structure equations, an observation is made regarding (pseudo-)conformal structures viewed as causal structures.

\begin{prop}\label{prop:causal-vs-conformal}
 A causal structure $(M,\cC)$ of signature $(p+1,q+1)$ with vanishing Fubini cubic form induces a pseudo-conformal structure of the same signature on $M$. The resulting pseudo-conformal structure is flat if the initial causal structure has vanishing \wsf curvature, and hence flat. Conversely, any pseudo-conformal structure defines a causal structure of the same signature on its bundle of null cones. The resulting causal structure is flat if the initial pseudo-conformal structure is flat.
\end{prop}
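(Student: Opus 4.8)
The plan is to treat the two directions separately, using the first order structure equations \eqref{eq:1st-order-str-eqns}, the identification of $F_3=F^{abc}\theta_a\circ\theta_b\circ\theta_c$ with the Fubini cubic form of the fibers, and the identification of $W_{anbn}$ with the \wsf curvature.

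For the forward direction, I would first note that the hypothesis $F_{abc}=0$ says each fiber $\cC_x$ has vanishing Fubini cubic form. By the classical characterization in projective differential geometry, a tangentially non-degenerate hypersurface with vanishing Fubini cubic form is an open subset of a quadric; hence each $\cC_x$ is an open part of the projectivized null cone of a quadratic form $q_x$ on $T_xM$, determined uniquely up to scale. A smooth field of such quadrics in $\PP TM$ is precisely a pseudo-conformal structure $[q]$ on $M$, and since the normalized projective second fundamental form has $h^{ab}=\ve^{ab}$ of signature $(p,q)$, the form $q_x$ has signature $(p+1,q+1)$. The semi-basic quadratic form $g=2\,\omega^0\circ\omega^n-\ve_{ab}\,\omega^a\circ\omega^b$ of \eqref{eq:HorizQuadraticFrom} is then the lift of a representative of $[q]$, the obstruction to its conformal descent along the fibers of $\pi:\cC\ra M$ being carried exactly by the torsion term $F^a{}_{bc}$ appearing in \eqref{1st-causal-II}.

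For the flatness assertion I would invoke Theorem \ref{thm:e-structure}: the Fubini cubic form and the \wsf curvature are the two essential invariants, and their simultaneous vanishing forces the $\{e\}$-structure to coincide with the Maurer-Cartan form of $\fo(p+2,q+2)$; thus the added hypothesis $W_{anbn}=0$ makes the causal structure flat. Since a causal structure with $F_{abc}=0$ determines its conformal structure uniquely (each quadric fiber fixes $[q_x]$) and the flat causal model is, by the construction of Section \ref{sec:flat-model-2}, the null cone bundle of the flat pseudo-conformal structure, the induced structure $[q]$ is itself flat. For the converse, I would begin with a representative metric $g_{ij}$ of signature $(p+1,q+1)$ and set $\cC_x=\{[y]:g_{ij}y^iy^j=0\}$, which is independent of the representative. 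As in the pseudo-conformal Example of Section \ref{sec:causal-structures}, in the preferred coordinates $F=\ve_{ab}y^ay^b$, so the vertical Hessian yields $h^{ab}\propto\ve^{ab}$; the fibers are thus tangentially non-degenerate with projective second fundamental form of signature $(p,q)$, whence $(M,\cC)$ is a causal structure of signature $(p+1,q+1)$ whose Fubini cubic form vanishes identically. If $[g]$ is flat it is locally equivalent to the flat conformal model, so $\cC$ is locally equivalent to the flat causal model and is therefore flat; equivalently, a flat conformal structure has vanishing Weyl tensor, which forces $W_{anbn}=0$, and together with $F_{abc}=0$ Theorem \ref{thm:e-structure} again gives flatness.

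The step I expect to be the main obstacle is the first one: promoting the pointwise fact ``vanishing Fubini cubic form implies the fiber is a quadric'' to a genuinely smooth pseudo-conformal structure on $M$ with the correct signature. This amounts to checking that the field of quadric null cones is smooth in $x$ and, equivalently at the level of the coframe, that the conformal class of $g$ is basic for $\pi$ once $F^a{}_{bc}=0$, as well as verifying (as in the remark after Definition \ref{def:causal-str-def}) that the normalization $h^{ab}=\ve^{ab}$ propagates to a globally consistent signature across the fibers.
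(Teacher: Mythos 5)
Your proposal is correct in outline, but it routes the forward direction differently from the paper, and the step you yourself flag as the ``main obstacle'' is precisely where the paper's entire argument lives. The paper never invokes the classical projective-geometric theorem that a tangentially non-degenerate hypersurface with vanishing Fubini cubic form is an open piece of a quadric. Instead, it works entirely at the level of the structure equations: it first uses the Bianchi identities \eqref{eq:causal-bianchi-K} to conclude that $F_{abc}=0$ forces $K_{ab}=0$ and then $L_a=0$, and only then computes that $\cL_{\frac{\partial}{\partial\theta_a}}g=\lambda g$ for the semi-basic form $g$ of \eqref{eq:HorizQuadraticFrom}, so that the conformal class of $g$ is basic for $\pi:\cC\ra M$ and descends to a pseudo-conformal structure. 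This simultaneously settles smoothness, signature, and the identification of the fibers with quadrics, so the quadric theorem becomes a corollary rather than an input. Your approach buys a conceptually clean pointwise picture (each fiber is a quadric, hence a conformal class at each point), but at the cost of a separate smoothness argument; the paper's approach buys everything at once from the coframe.

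The one concrete imprecision in your plan: you assert that the obstruction to conformal descent of $g$ is carried ``exactly'' by the torsion term $F^a{}_{bc}$ in \eqref{1st-causal-II}. That is not quite right as stated, because the vertical Lie derivative of $g$ also picks up the terms $-K^a{}_b\,\theta^b\w\omega^0$ in \eqref{1st-causal-II} and $-L^a\,\theta_a\w\omega^0$ in \eqref{1st-causal-III}; the descent requires $K_{ab}$ and $L_a$ to vanish as well. These vanish only because of the Bianchi identities $K_{ab}=\frac{2}{n-1}F^c{}_{ab;\unc}$ and $L_a=\frac{1}{2(n-2)}\bigl(F_{abc}K^{bc}-K_a{}^b{}_{;\unb}\bigr)$, which you would need to cite to close your argument. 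Your converse direction and both flatness assertions are essentially the paper's: the converse via the adapted coframe with $F=\ve_{ab}y^ay^b$ matches, and for flatness the paper, like you, uses that the flat $\{e\}$-structure fibers over $M$ (equivalently, that the flat causal model is the null cone bundle of the flat conformal model); your alternative argument via the Weyl tensor forcing $W_{anbn}=0$ is a valid variant not used in the paper.
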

\begin{proof}
To show the first part, note that if the Fubini form vanishes then, by the Bianchi identities \eqref{eq:causal-bianchi-K}, the quantities $K_{ab}$ and $L_a$ vanish as well. The first order structure equations~\eqref{eq:MC-Forms-first-order-str-eqns} can be used to show that the Lie derivatives of the quadratic form~$g$, defined in Section~\ref{sec:conf-class-quadr-1}, along vertical vector fields $\frac{\partial}{\partial\theta_1},\dots,\frac{\partial}{\partial\theta_{n-1}}$ are given by
\begin{gather} \label{eq:Lie-deriv-of-g-conformal}
 \cL_{\frac{\partial}{\partial \theta_a}}g= \lambda g,
\end{gather}
for some non-vanishing function $\lambda$ on $\cC$. The vector fields $\frac{\partial}{\partial \omega^i}$, and $\frac{\partial}{\partial \theta_a}$ are defined as dual to the coframe $\omega^0,\dots,\theta_{n-1}$. The vertical vector fields are integrable with the fibers $\cC_x$ as their integral manifolds, and $M$ as the quotient space. Consequently, the conformal class of the quadratic form $g$ descends to $M$ and because it is non-degenerate and of maximal rank on~$M$, a~pseudo-conformal structure on~$M$ is obtained with the same signature as that of $g$. If the causal structure is flat, i.e., the $\{e\}$-structure is given by the Maurer--Cartan forms of $\mathrm{O}(p+2,q+2)$, then the structure bundle fibers over~$M$ as well, implying that the resulting pseudo-conformal structure has to be flat too.

The converse part is shown similarly. Given a pseudo-conformal structure of signature \smash{$(p+1,q+1)$}, choose a coframe $(\omega^0,\dots,\omega^n)$ so that a representative of the pseudo-conformal structure takes the form $g=2\omega^0\circ\omega^n-\ve_{ab}\omega^a\circ\omega^b$. On its null cone bundle $\pi\colon \cC\ra M$, the metric~$g$ pulls back to define a quadratic form on $\cC$ satisfying~\eqref{eq:Lie-deriv-of-g-conformal}. The 1-forms $\omega^i$ define a set of semi-basic 1-forms on $\cC$. Using conformal transformations, $\omega^i$'s can be chosen to become adapted to the flag \eqref{Conframe-adaptation-00}. This can be seen as follows. The affine tangent space $\wh T_y\cC_x\subset T_xM$ is a null hyperplane and by a conformal transformation $\omega^0$ can be taken to be its annihilator. Moreover, the dual of~$\omega^0$ with respect to the metric~$g$ lies on~$\hat y$. More explicitly, let $ y\in\hat y\subset \wh\cC$. Because, $g_{ij}y^iy^j=0$, the 1-form $\omega^0$ as the annihilator of $\wh T_y\cC_x$ can be expressed as $\omega^0=g_{ij}y^j\exd x^i$. Now, it is apparent that the dual of~$\omega^0$ lies in $\hat y$. Since the metric is expressed as $g=2\omega^0\circ\omega^n-\ve_{ab}\omega^a\circ\omega^b$, it follows that $\omega^a(\hat y)=0$, and $\omega^n(\hat y)\neq 0$. Hence, $\omega^i$'s are adapted to the flag~\eqref{Conframe-adaptation-00}.

 Since $g$ satisfies \eqref{eq:Lie-deriv-of-g-conformal}, the resulting causal structure have vanishing Fubini form. If the conformal structure is flat, i.e., isomorphic to the flat model, then as discussed in Section \ref{sec:flat-model-2}, the corresponding causal structure is flat as well.
Because the construction is reversible, the result follows from the first part of the proposition.
 \end{proof}
The following corollary follows immediately.
\begin{cor}
 If a pseudo-conformal structure lifts to a causal structure with vanishing \wsf curvature, then the resulting causal structure is flat.
\end{cor}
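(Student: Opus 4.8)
The plan is to chain together the converse direction of Proposition~\ref{prop:causal-vs-conformal} with the flatness criterion supplied by Theorem~\ref{thm:e-structure}, so that essentially no new computation is required; this is why the statement is recorded as a corollary rather than a theorem.

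First I would record the one structural input about the lift. When a pseudo-conformal structure is lifted to a causal structure on its bundle of null cones, the fibers $\cC_x$ are quadrics, and a quadric carries identically vanishing Fubini cubic form, so the lifted causal structure satisfies $F_{abc}=0$. This is precisely the conclusion already extracted in the converse part of Proposition~\ref{prop:causal-vs-conformal}: there it is observed that, because $g$ satisfies \eqref{eq:Lie-deriv-of-g-conformal}, the resulting causal structure has vanishing Fubini form. Thus the lift automatically annihilates one of the two essential invariants, independently of any further hypothesis.

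Next I would feed in the hypothesis. By assumption the lifted causal structure has vanishing \wsf curvature, i.e.\ $W_{anbn}=0$. By Theorem~\ref{thm:e-structure} the essential local invariants of a causal structure are exactly the Fubini cubic form $F_3$ and the \wsf curvature, and both now vanish. The final assertion of Theorem~\ref{thm:e-structure} states that the vanishing of the essential invariants forces the associated $\{e\}$-structure to coincide with the one given by the Maurer--Cartan forms of $\fo(p+2,q+2)$, which is by definition the flat causal structure. Applying this yields the flatness of the lifted causal structure, which is the assertion of the corollary. (Equivalently, one may cite the first clause of Proposition~\ref{prop:causal-vs-conformal}, which already packages ``vanishing Fubini plus vanishing \wsf implies the causal structure is flat.'')

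I do not expect a genuine obstacle, since the corollary is a formal consequence of the two cited results. The only point that genuinely needs checking is that the canonical lift really does produce vanishing Fubini form rather than merely small Fubini form; this is guaranteed by the quadric nature of the null cones and was already established inside the proof of Proposition~\ref{prop:causal-vs-conformal}, so the verification amounts to invoking that construction rather than redoing it.
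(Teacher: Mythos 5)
Your proposal is correct and follows essentially the same route as the paper: the converse part of Proposition~\ref{prop:causal-vs-conformal} gives vanishing Fubini cubic form for the lifted structure, and then the vanishing of both essential invariants (via Theorem~\ref{thm:e-structure}) forces flatness. The only cosmetic difference is that you cite Theorem~\ref{thm:e-structure} explicitly for the flatness criterion, whereas the paper states this implicitly; the paper also appends the (unstated in the corollary) observation that the original pseudo-conformal structure must then be flat, which your argument omits but which is not required by the statement.
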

\begin{proof}
 By Proposition \ref{prop:causal-vs-conformal}, a pseudo-conformal structure defines a causal structure with va\-nishing Fubini cubic form. If the resulting causal structure has vanishing \wsf curvature, then all of its essential invariants are zero and is therefore flat. By Proposition \ref{prop:causal-vs-conformal}, one obtains that the initial pseudo-conformal structure has be flat as well.
\end{proof}
\begin{rmk}
 As was mentioned previously, the discussion above goes through in the case of conformal structures of positive definite signature. The extra step would be to complexify the inner product and the tangent bundle and work with complex causal structures endowed with a reality condition. See \cite{LebrunNullGeod} for a treatment of complex conformal structures.
\end{rmk}

\subsection{Causal structures as parabolic geometries}\label{sec:norm-cart-conn}
 This section provides the necessary definitions and theorems needed to characterize causal structures as parabolic geometries. A main reference on parabolic geometries is \cite{CS-Parabolic}, but, for the purposes of this section the articles \cite{ANNMonge,KT-gap,Yamaguchi-SimpleDiffSystems,Zelenko-Tanaka} suffice.

The main step is the characterization of causal structures by a bracket generating distribution with certain properties, described in Section~\ref{sec:char-caus-struct}. As a result, causal structures on an \smash{$(n+1)$}-dimensional manifold can be viewed as regular normal infinitesimal flag structures of type $(B_{n-1},P_{1,2})$ or $(D_n,P_{1,2})$ if $n\geq 4$, or $(D_3,P_{1,2,3})$ if $n=3$. By a result of Tanaka--Morimoto--\v{C}ap--Schichl, such structures can be canonically identified as regular normal parabolic geometries of the same type.

 \subsubsection{Parabolic geometries and Tanaka theory} \label{sec:parabolic-geometries}
A parabolic geometry is a Cartan geometry $(\cP,\omega)$ of type $(G,P)$ where $G$ is a real or complex semi-simple Lie group and $P\subset G$ is a parabolic subgroup. Recall that a Cartan geometry $(\cP,\omega)$ of type $(G,H)$ is a principle bundle $H\longrightarrow \cP\longrightarrow M$, endowed with a Cartan connection $\omega$.
The 1-form $\omega$ takes value in $\fg$, the Lie algebra of $G$, and satisfies the following conditions:
 \begin{enumerate}\itemsep=0pt
 \item[1)] $\omega_u\colon T_u\cP\lra\fg$ is an isomorphism of vector space for all $u\in\cP$,
 \item[2)] $V^\dag\im\omega=V$ for all $V\in\fh$ where $V^\dag$ is the fundamental field of $V$,
 \item[3)] $(R_h)^*\omega=\mathrm{Ad}(h^{-1})\omega$ for all $h\in H$.
 \end{enumerate}

A canonical way of identifying a geometric structure as a parabolic geometry is by characterizing it in terms of certain \emph{bracket generating distributions} possibly equipped with a tensor field, e.g., conformal structures or semi-Riemannian structures. Recall the following definitions. To any distribution $D\subset T N$, one can associate a Pfaffian system
\begin{gather*}D^\perp:=\{\omega\in T^*\cC\,|\, \omega(u)=0\ \forall\, u\in D\}.\end{gather*}
Given a distribution $D\subset T\cC$, its space of sections is denoted by $\Gamma(D)$ and its \emph{derived distribution} is defined by $ D^2=D+[D,D]$. Here, $D+[D,D]$ means a distribution whose sections are obtained from $ \Gamma(D)+[ \Gamma(D), \Gamma(D)]$. Define $D^1=D$. The\emph{ $k$-th weak derived distribution} of $D$ is defined by
\begin{gather*} %\label{eq:derived-distribution}
 D^{k+1}=D^k+\big[D^k,D\big].
\end{gather*}
One obtains that
\begin{gather*} %\label{eq:Dual-derive-distribution}
\big( D^{k+1}\big)^\perp=\big\{\omega\in \big(D^k\big)^\perp\, \big|\, \exd\omega\equiv 0 \ \operatorname{mod} \ \big(D^k\big)^\perp\big\}.
\end{gather*}
A distribution $D\subset TN$ is called bracket generating if there exists $k>1$ such that $D^k=TN$. The small growth vector of a bracket generating distribution $D$ is $(d_1,\dots,d_k)$ where $d_k=\dim \big(D^k\big)$. From now on, by restricting to a sufficiently small open set, it is assumed that the growth vector of the distribution is constant.

On a manifold $M$, a bracket generating distribution, $D$, of depth $\mu$ induces a filtration of~$T_xM$ for all $x\in M$, by
\begin{gather*}D(x)=:D^1(x)\subset D^2(x)\subset \cdots\subset D^\mu(x)=T_xM. \end{gather*}
Let $\fm(x)$ denote the grading of $T_xM$ that corresponds to the filtration above, i.e.,
\begin{gather*}\fm(x)=\bigoplus_{i=-1}^{-\mu} \fg_{i}(x),\qquad \fg_i(x):=D^{-i}(x)/D^{-i-1}(x).\end{gather*}
At each $x\in M$, the Lie bracket induces a bracket on $\fm(x)$ called the Levi bracket which is tensorial and turns $\fm(x)$ into a graded nilpotent Lie algebra (GNLA). The Lie algebra $\fm(x)$ is called the \emph{symbol algebra} of $D(x)$. A distribution $D$ of \emph{constant type} has the property that $\fm(x)\cong\fm$ for some GNLA $\fm$ for all $x\in M$.

An interesting class of GNLA arises from pairs $(G,P)$ where $G$ is a real or complex semi-simple group and $P\subset G$ is a parabolic subgroup in the following way. A $|\mu|$-grading for the Lie algebra of $G$, denoted by $\fg$, is a vector space decomposition
\begin{gather*}\fg=\fg_{-\mu}\oplus\cdots\oplus\fg_{\mu},\end{gather*}
satisfying (i)~$[\fg_i,\fg_j]\subset\fg_{i+j}$; (ii)~$[\fg_i,\fg_{-1}]=\fg_{i-1}$ for all $i\leq -1$, i.e., the negative part of the grading,
\begin{gather*}\fg_-:=\fg_{-\mu}\oplus\cdots\oplus\fg_{-1},\end{gather*}
is generated by $\fg_{-1};$ and (iii)~$\fg_{\pm\mu}\neq 0$.

The subalgebra $\fg_-$ is GNLA and
\begin{gather*}\fp:=\fg_0\oplus\fg_1\oplus\cdots\oplus\fg_\mu\end{gather*}
is always a parabolic subalgebra of $\fg$. Its corresponding subgroup, $P\subset G$, is a subgroup that lies between the normalizer $N_G(\fp)$ and its identity component. The subgroup $G_0\subset P$ defined as
\begin{gather*}G_0=\{g\in P\,|\, \mathrm{Ad}_g(\fg_i)\subset\fg_i\ \mathrm{for\ all\ }i=-\mu,\dots,\mu\}\end{gather*}
preserves the grading of $\fg$.

For such a choice of $(G,P)$, there is a canonical construction of a bracket generating distribution whose symbol algebra is $\fg_-$. Let $N_\fg$ be the simply-connected Lie group with Lie algebra~$\fg_-$. The left-invariant vector fields obtained from~$\fg_{-1}$ induces a distribution of constant type on~$N_\fg$ whose symbol algebra is isomorphic to~$\fg_-$.

For a semi-simple Lie algebra $\fg$ every derivation is inner. Therefore given a $|\mu|$-grading of $\fg$ it admits a unique \emph{grading element}, i.e., $e\in\fg_0$ such that $[e,g]=jg$ for all $g\in\fg_j$ and $-\mu\leq j\leq \mu$.

Consequently, the space of $q$-forms on $\fg_-$ with values in $\fg$, denoted by $\Lambda^{q}(\fg_-,\fg)$,
 admits a decomposition $\oplus_{p}\Lambda^{p,q}(\fg_-,\fg)$ where $\Lambda^{p,q}(\fg_-,\fg)$ is the set of $q$-forms $\omega$, with weight $p$, i.e., $\cL_e\omega=p\omega$.\footnote{In \cite{Yamaguchi-SimpleDiffSystems}, the space $\Lambda^{p,q}$ is referred to as $C^{p-q+1,q}$. } The decomposition can be written as
\begin{gather*}\Lambda^{p,q}(\fg_-,\fg)=\bigoplus_{k}\fg_{k+p}\otimes\wedge^q_{k}(\fg_-^*),\qquad\text{where}\quad \wedge^q_k(\fg_-^*):=\sum_{\begin{subarray}{l} r_1+\cdots+r_q=k\\ r_1,...,r_q<0 \end{subarray}}\fg^*_{r_1}\w\cdots\w\fg^*_{r_q}.\end{gather*}

According to a theorem of Tanaka \cite{Yamaguchi-SimpleDiffSystems}, if $H^{1,p}(\fg_-,\fg)=0$, for all $p\geq 0$, then via a process called the \emph{Tanaka prolongation} the Lie algebra of infinitesimal symmetries of a distribution with symbol algebra $\fg_-$ is isomorphic to $\fg$, i.e., $\mathfrak{inf}(D)=\fg$.

The above theorem can be applied to \emph{regular infinitesimal flag structures} of type $(G,P)$, which is defined as follows. Consider a manifold~$M$ endowed with a bracket generating distribution of constant type whose symbol algebra is~$\fm$. The graded frame bundle is a right principal bundle $\operatorname{Aut}_{\rm gr}(\fm)\ra\cF_{\rm gr}(M)\ra M$, where $\operatorname{Aut}_{\rm gr}(\fm)$ is the group of automorphisms of~$\fm$ that preserve the grading.\footnote{In case $M$ is endowed with additional structure, e.g., conformal structures or semi-Riemannian structures, this principal bundle can be reduced further.} If the symbol algebra of the distribution is isomorphic to~$\fg_-$ and $\operatorname{Aut}_{\rm gr}(\fm)$ is isomorphic\footnote{If there exists additional structure on the distribution then $\operatorname{Aut}_{\rm gr}(\fm)$ reduces to~$G_0$.} to $G_0\subset\operatorname{Aut}_{\rm gr}(\fg_-)$, then one says that the distribution induces a~regular infinitesimal flag structure of type $(G,P)$.

By a result of Tanaka \cite{Tanaka-79}, Morimoto \cite{Morimoto-Filtered} and \v{C}ap--Schichl \cite{CS-Cartan}, there is an equivalence of categories between \emph{regular, normal} parabolic geometries and regular infinitesimal flag structures via Tanaka prolongation. Regularity and normality of a parabolic geometry $(G,P)$ is defined in terms of its curvature function of its Cartan connection $(\cP,\omega)$ as follows. At each point $s\in\cP$, the curvature $\Omega=\exd\omega+[\omega,\omega]$ of the Cartan connection can be used to define the curvature function $K\colon \cP\rightarrow \wedge^2\fg_-^*\otimes\fg $ as
\begin{gather*}K(s)(X,Y)=\Omega\big(\omega^{-1}(X)(s),\omega^{-1}(Y)(s)\big),\qquad X,Y\in\fg_-,\end{gather*}
where the space $\fg_-^*$ can be identified with $\fg_+:=\oplus_{i>0}\fg_i$ via the Killing form.
 The Spencer cohomology groups $H^{p,q}\subset \fg\otimes\wedge^q(\fg_-^*)$, are defined
using the Kostant operators $\partial\colon \Lambda^{p,q}\rightarrow \Lambda^{p-1,q+1}$, and $\partial^*\colon \Lambda^{p,q}\rightarrow \Lambda^{p+1,q-1}$, given by
\begin{gather*}
 \partial c(X_1\w\cdots\w X_{q+1})=\sum_i(-1)^{i+1}[X_i,c(X_1\w\cdots\w\hat{X}_i\w\cdots\w X_{q+1})]\\
 \hphantom{\partial c(X_1\w\cdots\w X_{q+1})=}{} +\sum_{i<j} (-1)^{i+j}c([X_i,X_j]\w X_1\w\cdots\w \hat{X}_i\w\cdots\w\hat{X}_j\w\cdots\w X_{q+1}),\\
\partial^* c(X_1\w\cdots\w X_{q-1}) =\sum_i[e^*_i,c(e_i\w X_1\w\cdots\w X_{q-1})]\\
\hphantom{\partial^* c(X_1\w\cdots\w X_{q-1}) =}{} +\frac{1}{2}\sum_{i,j} (-1)^{j+1}c([e^*_i,X_j]_-\w e_i\w X_1\w\cdots\w\hat{X}_j\w\cdots\w X_{q-1}),
\end{gather*}
where $c\in \Lambda^{p,q}(\fg)$ and $[e^*_i,X_j]_-$ denotes the $\fg_-$-component of $[e^*_i,X_j]$, with respect to the decomposition $\fg=\fg_-\oplus\fp$. The operators above result in the orthogonal decomposition
\begin{gather*}% \label{eq:decompo-harmonic-curv}
 \Lambda^{p,q}=H^{p,q}+\square \Lambda^{p,q},
\end{gather*}
where the operator $\square:=\partial^*\partial+\partial\partial^*$ is referred to as the Kostant Laplacian and $\square H^{p,q}=0$.

The $\Lambda^{p,2}$-component of the curvature function $K$ is denoted by $K^{(p)}$. Consequently, $K$ can be decomposed as
\begin{gather} \label{eq:curvature-homogeneity}
 K=\sum_{i=-\mu+2}^{3\mu}K^{(i)},
\end{gather}
where $K^{(p)}\colon \fg_i\times\fg_j\rightarrow \fg_{i+j+p}$ is the component of $K$ with homogeneity $p$. Additionally, one can decompose the curvature function as
 \begin{gather*}K=\sum _{i=-\mu}^\mu K_p,\end{gather*}
where $K_p$ takes value in $\fg_p$. The $\fg_-$-component of $K$, $\sum\limits_{i=-\mu}^{-1} K_i$, is denoted by $K_-$. The components of the curvature lying in $H^{p,2}$ are called \emph{harmonic curvature} which constitute the set of essential invariants.

 A Cartan connection is called \textit{torsion-free} if $K_-=0$, \textit{normal} if $\partial^*K=0$, and \textit{regular} if it is normal and $K^{(i)}=0$, for all $i\leq 0$.

\subsubsection{Characterizing causal structures by a bracket generating distribution}\label{sec:char-caus-struct}
The fact that $[\ve^{ab}]$ is non-degenerate allows one to give a characterization of causal structures in terms of a bracket generating distribution with a certain small growth vector obtained as follows.

Define the distributions $\Delta$ and $D$ such that $\Delta^\perp=\{\omega^0,\dots,\omega^{n}\}$ and $D^\perp=\{\omega^0,\dots,\omega^{n-1}\}$. The rank of $D$ is $n$ and $\Delta$ is a completely integrable sub-distribution of $D$ of corank $1$ whose integral manifolds are given by the fibers $\cC_x$. Let $\bfv$ be a characteristic vector field. It follows that $\bfv\in D$ and $\bfv\notin \Delta$.

Knowing $\exd\omega^a\equiv -\ve^{ab}\theta_b\w\omega^n$, modulo $I\{\omega^0,\dots,\omega^{n-1}\}$ and that $\Delta$ is completely integrable implies that $( D^2)^\perp=\{\omega^0\}$. Similarly, the equation $\exd\omega^0\equiv-\theta_a\w\omega^a$, modulo $I\{\omega^0\}$, implies $(D^3)^\perp=\{0\}$, i.e., $D^3=T\cC$. As a result, the small growth vector of $D$ is $(n,2n-1,2n)$.

Let $\{f_1,\dots,f_{n-1}\}$ be a dual basis for $\Delta$ and $e_n$ be a basis for $D/\Delta$. Since the growth vector is $(n,2n-1,2n)$, it follows that $\rk(D^2/D)=n-1$. Because $\Delta$ is Frobenius, one obtains that the set of vectors $e_a=[e_n,f_a]$ are linearly independent and span $D^2/D$. This relation among~$e_a$,~$e_n$ and $f_a$ is equivalent to the structure equation $\exd\omega^a\equiv-\ve^{ab}\theta_b\w\omega^n$ modulo $I\{\omega^0,\dots,\omega^{n-1}\}$.

Furthermore, the equation $\exd\omega^0\equiv -\theta_a\w\omega^a$ modulo $I\{\omega^0\}$, implies that there is a non-degenerate pairing between $e_a$'s and $f_a$'s, i.e., $[e_\bfa,f_\bfa]=e_0$ (no summation involved) for all $1\leq \bfa\leq n-1$, where $e_0$ spans $D^3/D^2$.

One can investigate the action induced by the structure group $G_5$ in \eqref{eq:G5-StrGroup} on the \emph{symbol algebra} of $D$ as follows.
 Each tangent space $T_x\cC$ admits a filtration $D(x)\subset D^2(x)\subset D^3(x)$ which results in the grading $\fm(x)=\fg_{-1}(x)\oplus \fg_{-2}(x)\oplus\fg_{-3}(x)$ where $\fg_i(x)=D^{-i}/D^{-i-1}$ with $D^0=\{0\}$. Using the Lie bracket, $\fm(x)$ becomes a GNLA which would be the symbol algebra of~$D$. Since $\fg_{-1}$ is given by the kernel of $\cI_1:=I\{\omega^0,\dots,\omega^{n-1}\}$, the action of the structure group~$G_5$ on $\Itot=I\{\omega^0,\dots,\omega^n,\dots,\theta^{n-1}\}$, modulo $\cI_1$, is given by scaling on $\omega^n$ and by conformal transformations on $\theta^a$, i.e.,
\begin{gather*}\omega^n\ra \bfb^2\omega^n,\qquad \theta^a\ra \frac{\bfa}{\bfb}\bfA^a{}_b\theta^b.\end{gather*}
This action is isomorphic to the action of $G_0$ as a subgroup of the parabolic subgroup $P_{1,2}$, if $n\geq 4$ or $P_{1,2,3}$ if $n=3$, of $\mathrm{O}(p+2,q+2)$ where $p+q=n-1$. In other words, in causal structures the group of automorphisms of the coframe bundle which preserves the grading of~$T_x\cC$ is reduced to $G_0$.

\subsubsection{Normal Cartan connection}\label{sec:norm-cart-conn-1}

Using the flat model in Section~\ref{sec:flat-model-2}, it follows that the symbol algebra arising from the bracket generating distribution $D$, defined in Section \ref{sec:char-caus-struct} as $\{\omega^0,\dots,\omega^{n-1}\}^\perp$, is isomorphic to the negative part of the $|3|$-grading for $\fg=\fo(p+2,q+2)$, i.e.,
\begin{gather*}\fm\cong\fg_-=\fg_{-3}\oplus\fg_{-2}\oplus\fg_{-1}.\end{gather*}
The parabolic subalgebra that corresponds to this $|3|$-grading is denoted as $\fp_{1,2}$ when $n\geq 4$ and $\fp_{1,2,3}$ when $n=3$. The corresponding parabolic subgroup is given in Section \ref{sec:flat-model-2}, i.e., the parabolic subgroup of $\mathrm{O}(p+2,q+2)$ that is isomorphic to the stabilizer of the subspace spanned by $\{e_1,e_2\}$ where $\{e_1,\dots,e_{n+3}\}$ is some basis for $\RR^{n+3}$ and $p+q=n-1$. Alternatively, $\fp_{1,2}$ can be described by crossing the first two nodes in the Dynkin diagrams $B_{n-1}$ and $D_n$ when $n\geq 4$ and all three nodes when $n=3$.

Furthermore, the distribution $D$ induces a regular infinitesimal flag structure. This is due to the fact that the condition $H^{p,1}(\fg_-,\fg)=0$ is satisfied for all $p\geq 0$.
More directly, by the discussion in Section \ref{sec:char-caus-struct} and the first order structure equations, one obtains that $\operatorname{Aut}_{\rm gr}(\fg_-)$ is reduced to~$G_0$. Thus, by Tanaka prolongation one can view causal structures as regular, normal parabolic geometries of type $(B_{n-1},P_{1,2}) $ or $(D_n,P_{1,2})$ if $n\geq 4$, or $(D_3,P_{1,2,3})$. In Section \ref{sec:harmonic-curvature} it is discussed that when $n\geq 4$, causal structures are canonically isomorphic to regular, normal parabolic geometries of type $(B_{n-1},P_{1,2}) $ or $(D_n,P_{1,2})$. When $n=3$, causal structures are \emph{``subgeometries''} of regular, normal parabolic geometries of type $(D_3,P_{1,2,3})$, i.e., the integrability of the fibers $\cC_x$, implies the vanishing of one of the harmonic curvature components of such parabolic geometries.

Knowing the existence of a normal Cartan connection for causal structures, the normality conditions of Tanaka are straightforward to obtain and can be used to recursively define the correct change of basis which transforms the $\{e\}$-structure to a Cartan connection (see \cite{Omid-Thesis}). Since finding the explicit expressions for the change of basis is very tedious and the result is not particularly illuminating, finding the normal Cartan connection is not carried out here.

\subsection{Essential invariants and the Raychaudhuri equation}\label{sec:essential-invariants}
In this section the essential invariants whose vanishing imply the flatness of the causal structure are introduced. Their geometric interpretation is given. A variational problem is naturally associated to a causal geometry which allows one to define the notion of null Jacobi fields in the causal setting. Finally, the section ends with a discussion on associated Weyl connections of a~causal structure and a derivation of the Raychaudhuri equation for causal structures. In this section there are references to Appendix~\ref{cha:append-full-struct}, particularly to the $\{e\}$-structure obtained in Section~\ref{sec:an-e-structure}.
\subsubsection{The harmonic curvature}\label{sec:harmonic-curvature}

According to the $\{e\}$-structure obtained in Section \ref{sec:an-e-structure}, obstructions to the flatness of a causal structure, as listed in Sections \ref{sec:tors-coeff-2nd-strEq-1} and \ref{sec:bianchi-identities}, are generated by $F_{abc}$ and $W_{anbn}$. For instance, differentiating \eqref{1st-causal-IV} gives
\begin{gather*}%\label{eq:Wnabc-from-Wnanb}
W_{anbc} = \tfrac{2}{3} \big(W_{an[b|n;|\unc]}+F_{a[c}{}^dW_{b]ndn}\big) +\tfrac{4}{3(n+2)}\big(\ve_{a[c|}W^{d}{}_{n|b]n;\und} +\ve_{a[b}F^{de}{}_{c]}W_{dnen}\big).
\end{gather*}
In Section \ref{sec:tors-coeff-2nd-strEq-1}, the quantities $W_{ijkl}$ are expressed as the coframe derivatives of $W_{anbn}$ with respect to $\frac{\partial}{\partial\theta^a}$. As a result, in the case $F_{abc}=0$, the vanishing of~$W_{nabn}$ implies flatness of the causal structure.

For an understanding of $W_{anbn}$, consider a 2-plane in $T_xM$ given by $\lambda\w\mu$ where $\lambda=\lambda^i\bfe_i$, $\mu=\mu^i\bfe_i\in TM$ for some basis $\{\bfe_0,\dots,\bfe_n\}$. The \emph{sectional conformal curvature} of a pseudo-conformal structure evaluated at $\lambda\w\mu$ is defined by
\begin{gather*}W(\lambda\w\mu)=W_{ijkl}\lambda^i\mu^j\lambda^k\mu^l.\end{gather*}
As a result, at the point $(x,[y])\in\cC$, the invariant $W_{nabn}$ can be interpreted as the part of the conformal sectional curvature acting on 2-planes $\xi^a=\frac{\partial}{\partial\omega^n}\w\frac{\partial}{\partial\omega^a}$. According to the construction of the coframes in \eqref{Conframe-adaptation-00}, $\xi$ is a subset of $\widehat T_y\cC_x$ and contains $\hat y$ where $[\hat y]=y\in \cC_x$. It follows that $\xi^a=\mu^a\w y^i\partial_i$, where $\mu^a$ is an element of the \emph{shadow space}
\begin{gather}
 \label{eq:shadow-space}
S_{(x,[y])}:=\widehat T_y\cC_x/\hat y.
\end{gather}
As a result, $W_{anbn}$ can be thought of as the restriction of the bilinear form $W_{ijkl}y^jy^l$ to the shadow space. This restriction is well-defined up to scale. Infinitesimally, this scaling action is realized by the 1-form $\phi_n$ as is apparent from \eqref{infin-V}.

Finally, it can be argued that the Fubini cubic form and the \wsf curvature coincide with the harmonic curvatures of the parabolic geometries that correspond to causal structures. By the discussion in Section \ref{sec:norm-cart-conn-1}, causal structures on an $(n+1)$-dimensional manifold correspond to parabolic geometries of type $(B_{n-1},P_{1,2})$ and $(D_n,P_{1,2})$ if $n\geq 4$, and $(D_3,P_{1,2,3})$. As discussed in Section \ref{sec:parabolic-geometries}, a grading is induced on the cohomology classes of parabolic geometries. Moreover, the set of essential invariants for a parabolic geometry is given by the harmonic curvature. It turns out that for parabolic geometries of type $(B_{n-1},P_{1,2})$ and $(D_n,P_{1,2})$ if $n\geq 4$, the infinitesimal flag structure is the same as causal structures and the harmonic curvature has two irreducible components. Using the grading \eqref{eq:curvature-homogeneity} of the curvature, one component has homogeneity one whose $\fg_0$-irreducible representation is given as a totally symmetric and trace-free cubic tensor. The other irreducible component has homogeneity two whose $\fg_0$-irreducible representation is given as a symmetric and trace-free bilinear form (see \cite{ANNMonge,KT-gap,Yamaguchi-SimpleDiffSystems}). The structure equations of causal geometries imply that the essential invariant $F_{abc}$ corresponds to a cohomology class of homogeneity one and the \wsf curvature corresponds to a cohomology class of homogeneity two. Also, from the infinitesimal group actions~\eqref{infin-I} and~\eqref{infin-V} one obtains that the Fubini cubic form and the \wsf curvature are $\fg_0$-irreducible. Moreover, one can impose the Tanaka normality conditions on the pseudo-connection form of the $\{e\}$-structure to derive a normal Cartan connection (see~\cite{Omid-Thesis}). It turns out that the necessary change of coframe does not affect the Fubini cubic form and the \wsf curvature, i.e., the essential invariants of the $\{e\}$-structure coincide with the harmonic curvature.

The situation is different in four dimensions. Four dimensional causal structures are a special class of parabolic geometries of type $(D_3,P_{1,2,3})$. The harmonic curvature of these parabolic geometries is five dimensional, three of which have homogeneity one and two have homogeneity two. Consider a four dimensional causal structure of split signature. The Fubini cubic form of projective surfaces and the \wsf curvature are comprised of two entries.It turns out that all these four entries are irreducible under the action of $\fg_0$. As a result, four dimensional causal structures are special cases of parabolic geometries of type $(D_3,P_{1,2,3})$ in which one of the harmonic curvatures of homogeneity one is zero. The vanishing of this piece of harmonic curvature amounts to the fibration $\pi\colon \cC\rightarrow M$ with 2-dimensional fibers.

\subsubsection{The \ssf curvature}\label{sec:density-bundle}
The action of the structure group $G_5$ defined in \eqref{eq:G5-StrGroup} on the bilinear form \eqref{eq:HorizQuadraticFrom} is a scaling action $g\ra \bfa^2\bfb^2g$. Fix a representative $g$ and consider the sub-bundle $\cP_1=\{p\in\cP\,|\, \bfa^2\bfb^2=1\}$, where $\cP$ denotes the structure bundle associated to the derived $\{e\}$-structure (see Section \ref{sec:an-e-structure}). It follows that over $\cP_1$ one has
\begin{gather*} %\label{eq:introducing-s_i}
 0=\exd\big(\bfa^2\bfb^2\big)=2(\phi_0+\phi_n)+ s_i\omega^i+s_\una\theta^a,
\end{gather*}
for some functions $s_i$ and $s_\una$. Differentiating this equation again, it follows that
\begin{gather*}%\label{eq:translating-s_i-to-zero}
 \exd s_i\equiv\pi_i+\delta^0_i\big(s_a\gamma^a+s_a\pi^a\big) +\delta^a_i\big(s_b\phi^b{}_a+s_n\gamma_a +s_\una\pi_n\big) \qquad \operatorname{mod} \ \Itot.
\end{gather*}
 Consequently, $s_i$ can be translated to zero by the action of the structure group. It follows that
\begin{gather} \label{eq:Schouten-01}
 \pi_i=p_{ij}\omega^j+p_{i\una}\theta^a.
\end{gather}
 Substituting \eqref{eq:Schouten-01} and $\phi_n=-\phi_0+s_\una\theta^a$ into the structure equations, equation \eqref{1st-causal-IV} becomes
 \begin{gather*} %\label{eq:dtheta-schouten-01}
 \exd\theta^a=\phi^a{}_b\w\theta^b-2\phi_0\w\theta^a+ \half R^a{}_{nij}\omega^i\w\omega^j+ E^a{}_{nbi}\theta^b\w\omega^i+ \half C^a{}_{nbc}\theta^b\w\theta^c,
 \end{gather*}
where
\begin{gather}
 R^a{}_{nbn}=W^a{}_{nbn}+p_{nn}\delta^a{}_b,\qquad R^a{}_{nbc}=W^a{}_{nbc}+\delta^a{}_cp_{nb}+\delta^a{}_bp_{nc},\nonumber\\
 R^a{}_{n0b}=-p^a{}_b+\delta^a{}_bp_{n0},\qquad R^a{}_{n0n}=p^a{}_n.\label{eq:shadow-flag-curvature-with-trace-01}
\end{gather}
In conformal geometry, the symmetric part of $p_{ij}$ is referred to as the \emph{Schouten tensor}. The quantities $R^a{}_{nbn}$ and $ p_{nn}$ are called the \emph{shadow flag curvature} and the \emph{scalar shadow flag curvature} (or the \ssf curvature) of the causal structure respectively.

\subsubsection{Null Jacobi fields and the tidal force}\label{sec:jacobi-fields}

As was mentioned in Section \ref{sec:char-curv}, the characteristic curves of causal structures is intrinsically defined by the quasi-contact structure of $\cC$. Nevertheless, one can associate a \emph{variational problem} to causal geometries via the Griffiths formalism \cite{Griffiths-EDS-Book} (see also \cite{HsuCalculus92}). As a result, the notion of null Jacobi fields naturally arises. The corresponding Jacobi equations result in another interpretation of the \wsf curvature. The fact that Jacobi equations of null Jacobi fields involve the \wsf curvature has to do with the second Lie derivative of $g$ in~\eqref{eq:HorizQuadraticFrom}, which is referred to as the \emph{tidal force}~\cite{O'neill-Semi-Riemannian}.

Recall that for the Pfaffian system $\Ichar=\{\omega^0,\dots,\omega^{n-1},\theta^1,\dots,\theta^{n-1}\}$, after fixing an interval $I=(a,b)$, one can define the space of smooth immersions of its integral curves as
\begin{gather*}\widehat\sU(I)=\{\gamma\colon I\ra \cC \,|\, \gamma^*(\Itot)=0\}.\end{gather*}
The space $\widehat\sU(I)$ can be given a Whitney $C^\infty$-topology. After making a choice of orientation for the characteristic curves, the space of the characteristic curves of $\cC$, $\sU(I)$, is defined as $\widehat\sU(I)/\mathrm{Diff}(I)$, i.e., the space obtained from $\widehat\sU(I)$ by identifying the characteristic curves that are reparametrizations of each other.

 Consider the functional $\Phi\colon \sU\ra\RR$, which at a point $[\gamma]\in\sU(I)$ is defined by
\begin{gather*}\Phi([\gamma])=\int_I\gamma^*\omega^0.\end{gather*}
The triple $(\cC,I,\omega^0)$ is referred to as a \emph{variational problem} on $\cC$, and is intrinsic to any causal structure.
Since $\omega^0(\dot{\gamma})=0$ everywhere, the above definition of $\Phi$ is independent of the choice of representative $\gamma\in[\gamma]$.
This is in contrast with Finsler geometry where the geodesics are defined as the extremal curves of the arc-length functional which is always positive and are equipped with a specific parametrization.

Intuitively, the infinitesimal variations of a characteristic curve $\gamma$ can be identified with the tangent space $T_\gamma\sU(I)$ which is the space of smooth variational vector fields along $\gamma$. To give a~more precise definition consider a parametrization for $I=(a,b)$ and a map $\Gamma\colon (-\epsilon,\epsilon)\ra\sU(I)$. As a result of such parametrization, the curves given by $\Gamma(s)$ can be parametrized. Using this parametrization, one can define $\widehat\Gamma\colon (-\epsilon,\epsilon)\times I\ra\cC$, as $\widehat\Gamma(s,t)= (\Gamma(s) )(t)$. The map $\Gamma$ is called a compact variation of the curve $\gamma$ if the corresponding map $\widehat\Gamma$ is smooth, $\widehat\Gamma(0,t)=\gamma(t)$, $\wh\Gamma^*(s_0,t)(\Ichar)=0$, for any fixed $s_0$, and $\wh\Gamma$ is compactly supported in $I$, i.e., for all values of~$s$, the curves $\wh\Gamma(s,t)$ and $\wh\Gamma(0,t)$ coincide outside of a compact subset of~$I$. Then, $T_\gamma\sU(I)$ is expressed as
\begin{gather*}T_\gamma\sU(I):=\left\{\frac{\partial \wh\Gamma}{\partial s}(0,t) \vl \Gamma\colon (-\epsilon,\epsilon)\ra \sU(I) \mathrm{\ is\ a\ compact \ variation\ of\ }\gamma\right\}.\end{gather*}
The \emph{variational equations} of a characteristic curve can be thought of as the first order approximation of $T_\gamma\sU(I)$. A practical derivation of the variational equations can be given in terms of the Pfaffian system $\Ichar$.

Let $\eta\in\Ichar$, and $\Gamma$ a variation of the characteristic curve $\gamma$. By the definition of $\wh\Gamma(s,t)$, one has $\wh\Gamma^*(s_0,t)\eta=0$, and therefore $\cL_{\frac{\partial}{\partial s}}\wh\Gamma^*(s,t)\eta\equiv 0$ modulo $I\{\exd s\}$. Setting $s=0, $ the latter expression implies that
\begin{gather*} \gamma^*\big(\hat J\im\exd\eta+\exd(\hat J\im\eta)\big)=0,\end{gather*}
where $\hat J(t)=\frac{\partial}{\partial s}\widehat\Gamma(0,t)$ is the infinitesimal variations associated to $\Gamma(s)$.

In the expression of the variational equations one can drop the pull-back $\gamma^*$ and write them as
\begin{gather*}\hat J\im\exd\eta+\exd\big(\hat J\im\eta\big)\equiv 0\qquad \operatorname{mod} \ \Ichar.\end{gather*}
The tangential component of the vector field $\hat J(t)$ along $\gamma(t)$ has no effect on the variational equations, hence $\frac{\partial}{\partial\omega^n}$ will be dropped from the expressions. Therefore, $\hat J$ along the curve $\gamma(t)$ can be expressed as
\begin{gather*}\hat J(t)=V^a(t)\frac{\partial}{\partial \omega^a}+V^{\una}(t)\frac{\partial}{\partial\theta^a}.\end{gather*}
The vector field $J=\pi_*\hat J$ is called a \emph{null Jacobi field} along the null geodesics $\pi(\gamma(t))\subset M$.

The variational equations for $\hat J(t)$ can be written as
\begin{subequations}\label{eq:jacobi-eqns01}
\begin{gather}
 \hat J\im \exd\omega^0+\exd \big(\hat J\im\omega^0\big)=0, \label{eq:jacobi-eqns01-I}\\
 \hat J\im \exd\omega^a+\exd \big(\hat J\im\omega^a\big)=0 \qquad\operatorname{mod} \ \Ichar:=I\big\{\omega^0,\dots,\omega^{n-1},\theta_1,\dots,\theta_{n-1}\big\}. \label{eq:jacobi-eqns01-II}\\
 \hat J\im \exd\theta_a+\exd \big(\hat J\im\theta_a\big)=0, \label{eq:jacobi-eqns01-III}
 \end{gather}
\end{subequations}
equation \eqref{eq:jacobi-eqns01-I} gives no new information. The last two equations give
\begin{gather}
\exd V^a+V^b\phi^a{}_b-V^\una\omega^n\equiv 0,\nonumber\\
 \exd V^\una +V^\unb\phi^a{}_b+W^a{}_{nbn}V^b\omega^n\equiv 0 \qquad \operatorname{mod} \ \Ichar.\label{eq:jacobi-eqns02}
\end{gather}
Using the pseudo-connection from Section~\ref{sec:first-order-struct}, the covariant derivative along characteristic curves of vector fields lying within the shadow space \eqref{eq:shadow-space} is given by{\samepage
\begin{gather*}D_\bfv \hat J\equiv \bfv\im\left(\exd V^a\frac{\partial}{\partial\omega^a}+V^b\phi^a{}_b\frac{\partial}{\partial\omega^a}\right) \qquad \operatorname{mod} \ \frac{\partial}{\partial\theta^1},\dots,\frac{\partial}{\partial\theta^{n-1}}, \end{gather*}
where $\bfv$ is the characteristic vector field satisfying $\omega^n(\bfv)=1$.}

Using equations \eqref{eq:jacobi-eqns02}, it follows that
 \begin{gather} \label{eq:null-jacobi-equation-weyl-flag}
 D_\bfv D_\bfv \hat J \equiv \begin{cases} -W^a{}_{nbn}V^b\dfrac{\partial}{\partial\omega^a} & \operatorname{mod} \ \dfrac{\partial}{\partial\theta^1},\dots,\dfrac{\partial}{\partial\theta^{n-1}},\\
 -\overset{\mathrm{sf}}W \hat J & \operatorname{mod} \ \dfrac{\partial}{\partial\theta^1},\dots,\dfrac{\partial}{\partial\theta^{n-1}},
 \end{cases}
 \end{gather}
where $\overset{\mathrm{sf}}W$ is viewed as an automorphism of the shadow space. Recall from Section~\ref{sec:harmonic-curvature} that $W^a{}_{nbn}$ is given by $\omega^a(W(\frac{\partial}{\partial\omega^b},\bfv)\bfv$. Hence, defining
\begin{gather*}J'=\pi_*\big(\D_\bfv\hat J\big),\end{gather*}
the null Jacobi equation can be expressed as
\begin{gather} \label{eq:Jacobi-equation-final-form}
 J''+W(J,\bfv)\bfv=0,
\end{gather}
where $J=\pi_{*}\hat J$ and $W(J,\bfv)\bfv$ denotes $\pi_*\big(W(\hat J,\bfv)\bfv\big)$. For convenience, in Section~\ref{sec:raych-equat} equation~\eqref{eq:Jacobi-equation-final-form} is expressed as
\begin{gather} \label{eq:matrix-form-jacobi-eq}
 J''+\overset{\mathrm{sf}}W J=0.
\end{gather}
In some references \cite{O'neill-Semi-Riemannian} (cf.~\cite{HS2011Causal}) the automorphism of the shadow space $W^a{}_{nba}\colon S_{(x,[y])}\ra S_{(x,[y])}$ is called the trace-free part of the \emph{tidal force} in analogy with the terminology from general relativity.
Equation~\eqref{eq:null-jacobi-equation-weyl-flag} shows that variational problem for null geodesics only depends on~\wsf curvature and not the flag curvature.

\subsubsection{The Raychaudhuri equation}\label{sec:raych-equat}

In this section, a causal analogue of the Raychaudhuri equation along null geodesics is derived. The approach that is taken here is presented in \cite[Chapter~12]{BEE-Global}.

Taking a section of the structure bundle, $s\colon \cC\ra\cP$, define the null Jacobi fields $\hat J_1(t),\dots$, $\hat J_{n-1}(t)$ along the characteristic curve $\gamma(t)\subset \cC$, $t\in[a,b]$ by the initial conditions
\begin{gather*}\hat J_a(t_0)=0,\qquad \mathrm{D}_{\bfv(t_0)}\hat J_a(t_0)=\frac{\partial}{\partial (s^*\omega^a)},\end{gather*}
where $t_0\in[a,b]$ and $\bfv(t)$ is tangent vector to $\gamma(t)$. Define the $(1,1)$ tensor field called the \emph{null Jacobi tensor} as
\begin{gather*}A(t)=[J_1(t),\dots,J_{n-1}(t)],\end{gather*}
which is a $(n-1)\times (n-1)$ matrix whose $b$th column is the null Jacobi field $J_b(t)=\pi_*(\hat J_b(t))$.

The optical invariants along null geodesics are defined as follows.
\begin{df}
 Let $A$ be a null Jacobi tensor along a null geodesic and define $B=A' A^{-1}$, where $A$ is invertible. The \emph{expansion}, $\theta$, \emph{vorticity tensor,} $\omega$ and \emph{shear tensor}, $\sigma$, are defined as
\begin{gather*}\theta=B^a{}_a,\qquad \omega^a{}_b=\half \big(B^a{}_b-(B^*)^a{}_b\big),\qquad \sigma^a{}_b=\half \big(B^a{}_b+ (B^* )^a{}_b\big)-\frac{\theta}{n-1}\delta^a{}_b,\end{gather*}
where $B^*$ denotes the adjoint matrix with respect to the bilinear form $s^*g$.
\end{df}

Following the discussion in Section \ref{sec:jacobi-fields}, after taking a section of the structure bundle one obtains that a null Jacobi field satisfies the equation
\begin{gather*}J''+\overset{\mathrm{sf}}R J=0,\end{gather*}
rather than equation \eqref{eq:matrix-form-jacobi-eq}, where $\overset{\mathrm{sf}}R$ represents the matrix form of the shadow flag curvature $R^a{}_{nbn}=W^a{}_{nbn}+p_{nn}\delta^a{}_b$ defined in~\eqref{eq:shadow-flag-curvature-with-trace-01}.

It follows that \begin{gather*}A''+\overset{\mathrm{sf}}R A=0.\end{gather*}
Moreover, one has
\begin{gather*}B'=\big(A' A^{-1}\big)'=A'' A^{-1}-A'A^{-1}A'A^{-1}=-\overset{\mathrm{sf}}R -B^2.\end{gather*}
Since $B=\omega+\sigma+ \frac{\theta}{n-1}\mathrm{Id}$, one obtains
\begin{gather*}\theta'=\operatorname{tr}(B')=-\operatorname{tr}(\overset{\mathrm{sf}}R)-\operatorname{tr}(B^2)= -\operatorname{tr}(\overset{\mathrm{sf}}R)-\operatorname{tr}\left[\left(\omega+\sigma+ \frac{\theta}{n-1}\mathrm{Id}\right)^2\right].\end{gather*}
Using the fact that $\operatorname{tr}(\omega)=\operatorname{tr}(\sigma)=\operatorname{tr}(\omega\sigma)=0$ and $\operatorname{tr}(\overset{\mathrm{sf}}R)=(n-1)p_{nn}$ one arrives at
\begin{gather*} %\label{eq:Raychaudhuri-equation}
 \theta'=-(n-1)p_{nn}-\operatorname{tr}\big(\omega^2\big) -\operatorname{tr}\big(\sigma^2\big)-\frac{\theta^2}{n-1},
\end{gather*}
which is the causal analogue of the Raychaudhuri equation along null geodesics.

\subsection{Examples}\label{sec:examples}
In this section three examples of causal structures are given. The first one is obtained from the product of two Finsler structures and the second and third ones are given in terms of a defining function.

\subsubsection{Product of Finsler structures}\label{sec:prod-finsl-struct}

A Finsler manifold is denoted by $(N,\Sigma)$, where $N$ is $n$-dimensional and $\Sigma\subset TN$ is of codimension one whose fibers $\Sigma_p\subset T_pN $ are strictly convex hypersurfaces. The fiber bundle $\Sigma$ is often referred to as the indicatrix bundle over which the Finsler metric has unit value. It is known \cite{BryantRemarksFinsler, CartanFinsler} that $\Sigma$ is equipped with unique coframing $\eta=\big(\eta^0,\eta^1,\dots,\eta^{n-1},\zeta^1,\dots,\zeta^{n-1}\big)$ satisfying
\begin{gather}
 \exd \eta^0= -\zeta_a\w\eta^a,\nonumber\\
 \exd\eta^a=\phantom{-}\zeta^a\w\eta^0-\psi^a{}_b\w\eta^b-I^{ab}{}_{c}\zeta_b\w\eta^c,\label{eq:FinslerStrucure}\\
 \exd\zeta_a=\phantom{-}\psi^b{}_a\w\zeta_b+R_{0a0b}\eta^0\w\eta^b+\half R_{0abc}\eta^b\w\eta^c-J_a{}^b{}_c\zeta_b\w\eta^c\nonumber
\end{gather}
for some invariants $I_{abc}$, $J_{abc}$, $R_{0a0b}$ and $R_{0abc}$. The 1-form $\eta^0$ is called the Hilbert form of the Finsler structure which, at the point $v\in\Sigma_p$, is uniquely characterized by the conditions
 \begin{gather*}\eta^0(H_v\Sigma_p)=0,\qquad \eta^0(v)=1,\end{gather*}
where $H_v\Sigma_p\subset T_pN$ is the tangent hyperplane to $\Sigma_p$ at the point $v$. As a result of the structure equation, $\eta^0$ induces a contact structure on $\Sigma$ whose associated Reeb vector field defines the geodesic flow of the Finsler structure on $\Sigma$. Moreover, the quadratic form $\omega^0\circ\omega^0+\delta_{ab}\omega^b\circ\omega^c$ is well-defined for the Finsler structure. It turns out that the quantity $I_{abc}=\delta_{ad}\delta_{be}I^{de}{}_c$ can be used to define the cubic form $I_{abc}\zeta^a\circ\zeta^b\circ\zeta^c$, which, when restricted to a fiber $\Sigma_p$, coincides with the centro-affine cubic form of the hypersurface $\Sigma_{p}\subset T_pN$. The quantities $J_{abc}$ measure the rate of change of $I_{abc}$ along the geodesics, i.e., $J_{abc}=\frac{\partial}{\partial \eta^0}I_{abc}$. The quantity $R_{0a0b}$ is symmetric using which the \textit{flag curvature} of the Finsler structure can be represented by
\begin{gather*}R_{0a0b}\eta^a\circ\eta^b.\end{gather*}

Let $(N,\Sigma)$ and $(\tilde N,\tilde\Sigma)$ be two Finsler manifolds.
 Given a strictly positive function $f$ on their product space, at a point $p\in M=N\times{}_f\tilde N$, one can define a projective hypersurface whose associated cone is given by
 \begin{gather*}% \label{eq:product-causal-structure}
\widehat \cC_p=\big\{(v,\tilde v)\in T_p \big(N\times\tilde N\big)\,|\, \lambda f v\in\Sigma_{\pi(p)},\lambda\tilde v\in\tilde\Sigma_{\tilde\pi(p)} \mathrm{\ for\ some\ } \lambda\in\RR\big\}. \end{gather*}
As a result, $M$ is equipped with a causal structure of signature $(n,\tilde n)$ where $\tilde n=\dim \tilde N$. Note that it is assumed that the Finsler structures are reversible.

If $G$ and $\tilde G$ represent the Finsler metrics on $N$ and $\tilde N$, respectively, then the causal structure on $M$ is locally given as the zero locus of $ G- f\tilde G$.
If $f$ only depends on $N$, then the above causal structure corresponds to the warped product of the Finsler structures on $N$ and $\tilde N$.

More specifically, let $M=N\times{}_f\RR$, where $N$ is a 3-dimensional Finsler manifold. On $N$ there is the unique coframing $\big(\eta^0,\eta^1,\eta^2,\zeta^1,\zeta^2\big)$ satisfying \eqref{eq:FinslerStrucure}. If~$\eta^3$ represents a 1-form on $\RR$, one has
\begin{gather*}\exd f=f_{,0}\eta^0+f_{,1}\eta^1+f_{,2}\eta^2+f_{,3}\eta^3,\end{gather*}
where $f_{,i}=\frac{\partial}{\partial\eta^i}f$ are called the coframe derivatives of $f$. The projective Hilbert form is a~multiple of
\begin{gather*}\omega^0=\sqr \big(\eta^0-f\eta^3\big),\end{gather*}
where, by abuse of notation, the pull-backs are dropped and $\eta^0$ and $\eta^3$ represent $\pi^*\big(\eta^0\big)$ and~$\tilde\pi^*\big(\eta^3\big)$.

A choice of 3-adapted coframing for the causal structure on $M$ is given by
\begin{gather*} %\label{eq:3-adapted-causal-Finsler}
 \omega^1=\eta^1,\qquad \omega^2=\eta^2,\qquad \omega^3=\sqr\big({-}\eta^0-f\eta^3\big),\\
 \theta_1=\sqr\zeta_1-\sqr f_{,1}\eta^3,\qquad \theta_2=\sqr\zeta_2-\sqr f_{,2}\eta^3,\\
 \gamma_1=-\sqr \zeta_1+\sqr f_{,1}\eta^3,\qquad \gamma_2=-\sqr \zeta_2+\sqr f_{,2}\eta^3,\\
\phi_0=-\phi_3=-\half f_{,0}\eta^3,\qquad \phi=\pi_3=0.
\end{gather*}
The quantities $F^{ab}{}_c$ in the structure equations \eqref{eq:StrEq-2-adapted-A} coincide with $I^{ab}{}_c$ in \eqref{eq:FinslerStrucure}. By absorbing the trace of $F_{abc}$ and $K_{ab}$, a 5-adapted coframing $(\tilde\omega^i,\tilde\theta_a)$ is given by
\begin{gather*}
\tilde\omega^0=\omega^0,\qquad \tilde\omega^a= \omega^a+A^a\omega^0,\qquad \tilde\omega^3= \omega^3+ B\omega^0,\qquad \tilde\theta_a=\theta_a,\end{gather*}
where
\begin{gather*}
A^1=-\half(I_{111}+I_{122}),\qquad A^2=\half(I_{112}+I_{222}),\\
 B=\quar\big(\big(A^1\big)^2+\big(A^2\big)^2\big)-\textstyle{\frac{1}{2\sqrt 2}} \big(A^1{}_{,\underline{1}}+A^2{}_{,\underline{2}}\big)-A^1F_{111}-A^2F_{222},
\end{gather*}
where $F_{111}=\quar (I_{111}-3I_{122} )$, $F_{222}=\quar (I_{222}-3I_{211} )$ are the components of the Fubini cubic form of the causal structure and $A^a{}_{,\underline{b}}$ denotes the coframe derivative $\frac{\partial}{\partial\zeta^b}A^a$. A choice of connection forms for the first order structure equations is given by
\begin{gather*}
 \tilde\gamma^1=\gamma^1+ B_{,1}\omega^0-\big(\sqr B_{,0}+A^1{}_{,1}\big)\tilde\omega^1 -\half\big(A^1{}_{,2}+A^2{}_{;1}\big)\tilde\omega^2 +\sqr A^1{}_{,0}\tilde\omega^3+B\theta_1,\\
 \tilde\gamma^2=\gamma^2+ B_{,2}\omega^0-\half\big(A^1{}_{,2}+A^2{}_{,1}\big)\tilde\omega^1 -\big(\sqr B_{,0}+A^2{}_{,2}\big)\tilde\omega^2 +\sqr A^2{}_{,0}\tilde\omega^3+B\theta_2,\\
\tilde\phi_0=-\half A^1\theta_1-\half A^2\theta_2,\\
\tilde\phi_3=\textstyle{\frac{1}{2\sqrt 2}}
\big({-}B_{,0}\omega^0+A^1{}_{,0}\tilde\omega^1+A^2{}_{,0}\tilde\omega^2\big),\\
\tilde\phi\phantom{{}_{3}}=\half \big(A^1{}_{,2}-A^2{}_{,1}\big)\omega^0+\textstyle{\frac{1}{2\sqrt 2}} A^2{}_{,0}\tilde\omega^1-\textstyle{\frac{1}{2\sqrt 2}}A^1{}_{,0}\tilde\omega^2-\half A^2\theta_1+\half A^1\theta_2,\\
\tilde\pi_3= -\sqr A^1{}_{,0}\theta_1-\sqr A^2{}_{,0}\theta_2-\half\big(R_{0101}+R_{0202}+(\ln f)_{,11}+(\ln f)_{,22}\big)\tilde\omega^3.
\end{gather*}
Consequently, the \wsf curvature is given by the trace-free part of
\begin{gather} \label{eq:Causal-Finsler-Wsf}
 \half R_{0a0b}+\half (\ln f)_{,ab},
\end{gather}
where $(\ln f)_{;ab}:=\frac{\partial}{\partial\eta^b}\frac{\partial}{\partial\eta^a}\ln f$. Moreover, the quantities $K_{ab}$ and $L_a$ are given by
\begin{gather*}K_{12}=K_{21}=-\sqrt 2 A^1{}_{,\underline 2}-F_{112}A^1-F_{122}A^2,\qquad K_{11}=-K_{22}=-\sqr \big(A^1{}_{,\underline 1}-A^2{}_{,\underline 2}\big),\\
L_a=-\sqrt 2 B_{,\underline a}.\end{gather*}
To see the symmetric relation $K_{12}=K_{21}$ one needs to realize $A^1{}_{,\underline 2}=A^2{}_{,\underline 1}$.
It is rather straightforward to generalize this example to obtain a causal structure on $M=N\times{}_f \RR$ where $N$ is an $n$-dimensional Finsler manifold. The corresponding \wsf is given by expression \eqref{eq:Causal-Finsler-Wsf} and the Fubini cubic form is obtained from the centro-affine cubic form of indicatrices of the Finsler structure.
\subsubsection{Cayley's cubic scroll}\label{sec:cayleys-cubic-scroll}
Using the description \eqref{eq:ConeStr-DefGraph}, consider the causal structure on a 4-dimensional manifold given locally as the graph of the function
\begin{gather*}F\big(x^i;y^a\big)=\textstyle\frac{1}{3}c_{111}\big(y^1\big)^3+c_{12}y^1y^2+c_1y^1+c_2y^2+c_0,\end{gather*}
where the coefficients $c_{i\cdots k}$ are functions of $x^i$'s.

A choice of 1-adapted coframing is given by
\begin{gather*}% \label{eq:1-adapted-coframe-cayley}
 \omega^0=\exd x^0-\textstyle\frac{\partial}{\partial y^a}F\exd x^a+\big(y^a\textstyle\frac{\partial}{\partial y^a}F-F\big)\exd x^3,\qquad
 \omega^a=\exd x^a-y^a\exd x^3,\qquad \omega^3=\exd x^3,\\
 \theta_a= \ts\frac{\partial^2}{\partial x^i\partial y^a}F\exd x^i- \big(\ts\frac{\partial}{\partial x^a}F+\ts\frac{\partial}{\partial x^0}F\ts\frac{\partial}{\partial y^a}F\big)\exd x^n+\ts\frac{\partial^2}{\partial y^a\partial y^b}F\exd y^b.
\end{gather*}
A 2-adapted coframing is given by
\begin{gather*}\omega^1\lra\omega^1-\ts\frac{c_{111}}{c_{12}}y^1\omega^2,\qquad\omega^3\lra \ts\frac{1}{c_{12}}\omega^3,\qquad\theta_1\lra\theta_1-\ts\frac{c_{111}}{c_{12}}y^1\theta_2,\end{gather*}
with respect to which the second fundamental form is normalized to $h^{ab}=\left[
\begin{smallmatrix}
 0 & 1\\
 1 & 0
\end{smallmatrix}\right].
$ The details of higher coframe adaptations won't be discussed.

It should be noted that when the only non-zero coefficients are $c_{12}=1$ and $c_{0}=(x^1)^2$, then one obtains a local description of the split signature pp-wave metric. Moreover, when $c_0=c_1=c_2=0$ one obtains a local description of causal structures whose null cones are at each point projectively equivalent to Cayley's cubic scroll\footnote{A scroll is a ruled surface that is not developable \cite{Struik}.} whose graph can be expressed as $y^0=\third (y^1)^3+y^1y^2$ in $\PP^3$.

\subsubsection{Locally isotrivially flat causal structures}
This class of causal structures are the analogue of locally Minkowskian Finsler structures.
\begin{df}\label{def:isotrivially-flat} A causal structure $\big(\cC,M^{n+1}\big)$ is called locally isotrivially flat if it is locally equivalent to $(M\times \cV,M)$ where $\cV\subset \PP^{n}$ is a projective hypersurface.
\end{df}

Alternatively, a locally isotrivially flat causal structure has the property that locally it can be expressed as the graph of a function $F(y)$ (see equation~\eqref{eq:ConeStr-DefGraph}). Note that an isotrivially flat causal structure does not induce a conformal structure on $M$ unless $\cV$ is a quadric, in which case the obtained conformal structure is flat.

The local expressions~\eqref{eq:AdaptedCoframe01} and~\eqref{eq:theta-a-coframe01} of the 1-forms $\omega^i$ and $\theta_a$ can be used to express the Fubini cubic form and \wsf curvature. Firstly, note that $\theta_a=\partial_\una\partial_\unb F\exd y^b$. To express $F_{abc}$, consider the following transformation
\begin{gather} \label{eq:isotrivial-transfor}
 \tilde\omega^0=\bfa^2\omega^0,\qquad \tilde\omega^a=\bfE^a\omega^0+\omega^a,\qquad \tilde\omega^n=\bfe\omega^0+\omega^n,\qquad
 \tilde\theta_a=\bfa^2\theta_a.
\end{gather}
Differentiating $\tilde\omega^0$, it follows that
\begin{gather*}\exd\tilde\omega^0=-2\phi_0\w\tilde\omega^0-\tilde\theta_a\w\tilde\omega^a,\end{gather*}
where $\phi_0=-\frac{\exd\bfa}{\bfa}-\frac{\bfE^a}{2\bfa^2}\tilde\theta_a$. Consequently, in equation~\eqref{1-adapted-2} for 1-adapted coframes one obtains
\begin{gather*}\gamma^a{}_0=\ts\frac{1}{\bfa^2}\exd\bfE^a+\frac{1}{\bfa^4}\big(\bfE^a\bfE^b+H^{ab}\big)\theta_b, \qquad \psi^a{}_b=\frac{1}{\bfa^2}\bfE^a\theta_b,\qquad h^{ab}=\frac{1}{\bfa^2}H^{ab},\end{gather*}
where $[H^{ab}]$ denotes the inverse matrix of the vertical Hessian $[\partial_\una\partial_\unb F]$.

By composing equations \eqref{eq:InfGpAction-hab} and \eqref{eq:F-abc-E-aic}, it follows that the Fubini cubic form $F^{abc}$ is defined via the equation
\begin{gather*}\exd h^{ab}+h^{ac}\psi^{b}{}_{c}+h^{bc}\psi^a{}_c-2h^{ab} (\phi_0+\phi_\n) \equiv F^{abc}\theta_c\qquad \operatorname{mod} \ \Ibas.
\end{gather*}
Transformations \eqref{eq:isotrivial-transfor} imply that $\phi_n=0$. Using the above expressions for $\phi_0$ and $\psi^a{}_b$, without normalizing $h^{ab}$ to $\ve^{ab}$, one can impose the apolarity condition $h^{ab}F_{abc}=0$, as in~\eqref{eq:apolarity-causal}, by setting
\begin{gather*}\bfa=\big(\det (\partial_\una\partial_\unb F )\big)^{\frac{1}{2(n+1)}},\qquad \bfE^d=-\frac{\bfa^2}{n+1}H^{ab}H^{cd}\partial_\una\partial_\unb\partial_\unc F.\end{gather*}
Finally, using $\partial_{\una}\partial_\unb F$ to lower indices, it follows that
\begin{gather} \label{eq:explicit-Fubini-cubic-form}
F_{abc}=\bfa^{-2}\left(\partial_\una\partial_\unb\partial_{\unc}F -\frac{1}{n+1}\big( \partial_{\una}\partial_\unb F F_c+\partial_{\unb}\partial_\unc F F_a+\partial_{\unc}\partial_\una F F_b\big)\right),
\end{gather}
where $F_a=H^{bc}\partial_\una\partial_\unb\partial_{\unc}F$.

The above computations go through for any causal structure. The expression~\eqref{eq:explicit-Fubini-cubic-form} (see also \cite{AG-Projective,SasakiBook}) shows that the Fubini cubic form is a third order invariant. Using the Bianchi identities~\eqref{eq:causal-bianchi-K}, it follows that the jet order of the invariants $K^{ab}$ and $L^a$ over $\cC$ are four and five, respectively.

To find the \wsf curvature, note that without normalizing $h^{ab}$, the quantities $E_{anb}$ in equation~\eqref{eq:F-abc-E-aic} are defined via
\begin{gather*}\exd h^{ab}+h^{ac}\psi^{b}{}_{c}+h^{bc}\psi^a{}_c-2h^{ab}(\phi_0+\phi_\n) =E^a{}_n{}^b\omega^n \!\!\!\qquad \operatorname{mod} \! \ I\big\{\omega^0,\dots,\omega^{n-1},\theta_1,\dots,\theta_{n-1}\big\},
\end{gather*}
which by isotrivially flatness is zero. Hence, the reduction of $\pi_{ab}$ via \eqref{eq:E_aib-InfGpAct}, gives $\pi_{ab}-\frac{1}{n-1}\ve^{cd}\pi_{cd}\ve_{ab}\equiv 0$ modulo $\Itot$. Combining this with the fact that $\exd \theta_a\equiv 0$ mod $I\{\theta_1,\dots,\theta_{n-1}\}$, it follows that $W^a{}_{nbn}=0$, i.e., locally isotrivially flat causal structures have vanishing \wsf curvature.

\section{Causal structures with vanishing \wsf curvature}\label{sec:horiz-flat-causal}

In this section causal structures with vanishing \wsf curvature are studied. For such classes, a compatibility condition is derived which can be used to give a causal analogue of Landsberg spaces in Finsler geometry (see~\cite{BCS-Finsler} for an account of various types of Finsler structures). The section ends with a discussion on Lie contact structures on the locally defined space of characteristic curves of a causal structure with vanishing \wsf curvature.

\subsection{A compatibility condition}\label{sec:non-prem-cond}
In this Section a class of causal structures which can be regarded as analogues of Landsberg spaces in Finsler geometry is introduced. Their similarity is the content of Proposition~\ref{prop:Landsberg-analog}.

Since $\mathrm{D}F_{abc}$ in \eqref{infin-I} is semi-basic,
 \begin{gather} \label{eq:F_abc-coframe-derivatives-infin-action}
 \exd F_{abc} -F_{abd} \phi^d{}_c-F_{adc} \phi^d{}_b -F_{dbc} \phi^d{}_a -F_{abc} (\phi_0-\phi_\n)=F_{abc;i}\omega^i+F_{abc;\und}\theta^d.
 \end{gather}
After taking the Lie derivative of \eqref{eq:F_abc-coframe-derivatives-infin-action} along characteristic curves and using the structure equations, one arrives at
\begin{gather*}\exd f_{abc}-f_{abd} \phi^d{}_c-f_{adc} \phi^d{}_b -f_{dbc} \phi^d{}_a
-f_{abc} (\phi_0+\phi_\n) +F_{abc}\pi_n\equiv 0\qquad \operatorname{mod} \ \Itot.\end{gather*}
 Assume $f_{abc}=\lambda F_{abc}$, for some function $\lambda. $ Then, if $F_{abc}\neq 0$, the infinitesimal group action on~$\lambda$ is given by
\begin{gather*}\exd\lambda-2\lambda\phi_n+\pi_n\equiv 0\qquad \operatorname{mod} \ \Itot.\end{gather*} By translating~$\lambda$ to zero, the 1-form $\pi_n$ reduces to
 \begin{gather} \label{eq:pi_n-reduced-Landsberg}
\pi_n=p_{ni}\omega^i+p_{n\una}\theta^a.
 \end{gather}
for some coefficients $p_{ni}$ and $p_{n\una}$. Replacing $\pi_n$ with the above expression in equation~\eqref{1st-causal-IV}, one finds the infinitesimal action of the group $G_5$ on $p_{n\una}$ to be given by
\begin{gather*}dp_{n\una}-p_{n\una}(\phi_0+\phi_n)-p_{n\und}\phi^d{}_a+\pi_a\equiv 0 \qquad \operatorname{mod} \ \Itot.\end{gather*}
Translating $p_{n\una}$ to zero results in the reduction
\begin{gather} \label{eq:pi_a-reduced-Landsberg}
 \pi_a=p_{ai}\omega^i+p_{a\unb}\theta^b.
\end{gather}
Consequently, equation \eqref{1st-causal-IV} changes to
\begin{gather} \label{eq:d-theta-landsberg-1}
 \exd\theta_a=\big(\phi^b{}_a-(\phi_0-\phi_n)\delta^b{}_a\big)\w\theta_b+ \half W_{anij}\omega^i\w\omega^j+E_{anb0}\theta^b\w\omega^0,
\end{gather}
where
\begin{gather*}
 W_{anbn}=p_{nn},\qquad W_{anbc}=-p_{nb}\ve_{ac}+p_{nc}\ve_{ab},\\
W_{an0n}=p_{a0},\qquad W_{an0b}=p_{ab},\qquad E_{anb0}=-p_{a\unb}.
\end{gather*}
Similarly, one finds that the normalization $\ve^{ab}p_{a\unb}=0$ results in $\pi_0\equiv 0$ modulo $\Itot$. It follows that
\begin{gather} \label{eq:pi-0-reduced-landsberg}
 \pi_0=p_{0i}\omega^i+p_{0\una}\theta^a.
\end{gather}

Assume furthermore that $p_{nn}\neq0$. Using the relation \eqref{infin-V}, the infinitesimal group action on $p_{nn}$ is found to be
\begin{gather} \label{eq:conformal-group-action-on-pnn}
 \exd p_{nn}-4p_{nn}\phi_n\equiv 0 \qquad \operatorname{mod} \ \Itot.
\end{gather}
Hence, the normalization $p_{nn}=\pm 1$ can be made, resulting in $\phi_n$ reduced to
\begin{gather} \label{eq:phi_n-reduced-landsberg}
 \phi_n=r_i\omega^i+r_\una\theta^a.
\end{gather}
Furthermore, the infinitesimal group action of the reduced structure group on $r_\una$ is given by
\begin{gather*}\exd r_{\una}+r_{\unb}\phi^b{}_a-\gamma_a\equiv0\qquad \operatorname{mod} \ \Itot.\end{gather*}
As a result, by translating $r_{\una}$ to zero, the 1-forms $\gamma_a$ are reduced, i.e.,
\begin{gather} \label{eq:gamma_a-reduced-landsberg}
 \gamma_a=q_{ai}\omega^i+q_{a\unb}\theta^b.
\end{gather}
After these reductions, the proposition below can be proved.
\begin{prop}\label{prop:Landsberg-analog} A causal structure with vanishing \wsf curvature satisfying $f_{abc}=\lambda F_{abc}$ has vanishing \ssf curvature.
\end{prop}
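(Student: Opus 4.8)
The plan is to argue by contradiction, working on the reduced bundle produced by the normalizations that precede the statement, and to show that a nonzero scalar shadow flag curvature is incompatible with the Landsberg-type condition once the \wsf curvature is assumed to vanish. Throughout I would assume $F_{abc}\neq 0$ on an open set, since this is precisely where the hypothesis $f_{abc}=\lambda F_{abc}$ is a genuine restriction; on the locus $F_{abc}=0$ the causal structure is pseudo-conformal by Proposition \ref{prop:causal-vs-conformal}, vanishing \wsf curvature then forces flatness by the corollary to that proposition, and the scalar part $p_{nn}$ of the shadow curvature in \eqref{eq:shadow-flag-curvature-with-trace-01} vanishes in the flat case.

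So suppose, for contradiction, that $p_{nn}\neq 0$. By the weight-four infinitesimal action \eqref{eq:conformal-group-action-on-pnn}, namely $\exd p_{nn}-4p_{nn}\phi_n\equiv 0\m\Itot$, the scalar $p_{nn}$ can be scaled to the constant $\pm 1$; this pins down $\phi_n$ as the semi-basic form \eqref{eq:phi_n-reduced-landsberg}, and the subsequent translation $r_{\una}\mapsto 0$ pins down $\gamma_a$ as the semi-basic form \eqref{eq:gamma_a-reduced-landsberg}. At this stage every pseudo-connection form entering the structure equations has been expressed through the coframe $\{\omega^i,\theta_a\}$, so what remains is a genuine $\{e\}$-structure whose only surviving essential data are the (now parallel) Fubini cubic form $F_{abc}$ and the constant $p_{nn}=\pm 1$.

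First I would record the two consequences of the gauge $\lambda=0$ and of vanishing \wsf curvature: the characteristic derivative of the Fubini form vanishes, $f_{abc}=F_{abc;n}=0$, so by \eqref{infin-I} the form $F_{abc}$ is parallel along the characteristic curves, while the shadow flag curvature in \eqref{eq:shadow-flag-curvature-with-trace-01} becomes isotropic, $R^a{}_{nbn}=p_{nn}\delta^a{}_b$. The key step is then to impose $\exd^2=0$ on the reduced equations \eqref{eq:d-theta-landsberg-1} and \eqref{1st-causal-III} — equivalently, to commute the characteristic derivative with a shadow-space derivative of $F_{abc}$. Because $F_{abc}$ is parallel along $\bfv$ and the curvature encountered in this commutator is exactly the isotropic tensor $p_{nn}\delta^a{}_b$, the resulting Bianchi identity takes the schematic form $p_{nn}\,F_{abc}+(\text{terms linear in }\D F_{abc})\equiv 0$; tracing and using the apolarity relation $F^a{}_{ab}=0$ together with $f_{abc}=0$ isolates a nonzero multiple of $p_{nn}F_{abc}$. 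Since $p_{nn}=\pm 1$ and $F_{abc}\neq 0$, this is a contradiction, so $p_{nn}=0$.

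The main obstacle I anticipate is the third step: carrying out the $\exd^2=0$ computation on the fully reduced equations and correctly tracking which reduced torsion coefficients (the $p_{ai}$, $q_{ai}$, $r_i$, together with the components of $K_{ab}$ and $L_a$ governed by \eqref{infin-II} and \eqref{infin-III}) enter the relevant three-form component. The delicate point will be to verify that the coefficient multiplying $p_{nn}F_{abc}$ is genuinely nonzero and is not cancelled by contributions coming from the reduced forms $\gamma_a$ and $\phi_n$; this is exactly where vanishing of the \wsf curvature is essential, since the isotropy of the shadow curvature removes the anisotropic terms that would otherwise obstruct the trace argument.
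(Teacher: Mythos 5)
Your proposal is correct and follows essentially the same route as the paper's own proof: assume $F_{abc}\neq 0$, gauge $\lambda$ (hence $f_{abc}$) to zero so that the $\pi_i$ become semi-basic, normalize $p_{nn}$ to a nonzero constant so that $\phi_n$ and then $\gamma_a$ are also reduced, and extract from $\exd^2\theta_a=0$ an identity of the form $p_{nn}F_{abc}+(\ve\text{-type terms})=0$ (the paper's \eqref{eq:F_abc-vanish-landsberg}), whose totally symmetric trace-free part forces $F_{abc}=0$, a contradiction. Your two minor slips — asserting that \emph{all} pseudo-connection forms are reduced (the paper keeps $\phi_0$ and $\phi^a{}_b$ as genuine connection forms) and describing the residual terms as ``linear in $\D F_{abc}$'' when they are in fact pure-trace $\ve$-terms built from the reduced coefficients $p,q,r$ — do not affect the argument, since the conclusion rests, exactly as in the paper, on the orthogonality of the symmetric trace-free $F_{abc}$ to those terms.
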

\begin{proof} As was discussed, the condition $f_{abc}=\lambda F_{abc}$ results in the reduction of the 1-forms $\pi_i$ according to equations \eqref{eq:pi_n-reduced-Landsberg}, \eqref{eq:pi_a-reduced-Landsberg} and \eqref{eq:pi-0-reduced-landsberg}. Assuming $p_{nn}\neq 0$, equation \eqref{eq:conformal-group-action-on-pnn} is used to reduce~$\phi_n$, as in~\eqref{eq:phi_n-reduced-landsberg}, by normalizing $p_{nn}$ to a non-zero value~$c$. Consequently, the 1-forms $\gamma_a$ can be reduced to~\eqref{eq:gamma_a-reduced-landsberg}.

Now that all the 1-forms $\pi_i$, $\phi_n$ and $\gamma^a$ are reduced equation \eqref{eq:d-theta-landsberg-1} can be written as
\begin{gather*}
 \exd\theta_a= \big(\phi^b{}_a-\phi_0\delta^b{}_a\big)\w\theta_b +c \omega^a\w\omega^n+(p_{ab}-p_{n0}\ve_{ab})\omega^0\w\omega^b +p_{an}\omega^0\w\omega^n\\
\hphantom{\exd\theta_a=}{} -(p_{a\unb}+r_0\ve_{ab})\theta^b\w\omega^0-p_{nb}\ve_{ac}\omega^b\w\omega^c-r_c\ve_{ab}\theta^b\w\omega^c -r_n\theta^a\w\omega^n,
\end{gather*}
where $c$ denotes the normalized value of $p_{nn}$. It is noted that the translations of~$r_\una$, $p_{n\una}$, $\ve^{ab}p_{a\unb}$ to zero are used in the above equation. Similarly, express $\exd\omega^a$ and $\exd\omega^n$ by replacing $\gamma^a$ and $\phi_n$ with their reduced forms in~\eqref{eq:1st-order-str-eqns}.

The reduced structure equations can be used to express the identity $\exd^2\theta_a=0$. Collecting the coefficients of the 3-form $\theta^a\w\omega^b\w\omega^n, $ it follows that
\begin{gather} \label{eq:F_abc-vanish-landsberg}
 cF_{abc}+\big(r_{c;n}-r_{n;c}+\half p_{cn} -p_{nc}-r_nq_{cn}-r_cr_n\big)\ve_{ab}+p_{nb}\ve_{ac}+p_{an}\ve_{bc}+W_{abcn}=0,
\end{gather}
where $W_{abcn}=-W_{bacn}$ is a torsion term for $\Phi_{ab}$ as defined in \eqref{eq:curvatures1} and \eqref{eq:causal-StrEqns-incomplete-1}. According to~\eqref{eq:2nd-order-W-ijkl} and~\eqref{eq:2nd-order-biachies}, the value of $W_{abcn}$ is zero before the reduction. After reducing $\pi_a$ and $\gamma_a$, it is found that
\begin{gather*}W_{abcn}=p_{bn}\ve_{ac}-p_{an}\ve_{bc}.\end{gather*}
If $c\neq 0$, then equation \eqref{eq:F_abc-vanish-landsberg} and the fact that $F_{abc}$ is totally symmetric and trace-free imply $F_{abc}=0$. This is a contradiction with the assumptions $f_{abc}=\lambda F_{abc}$ and $F_{abc}$ being non-vanishing. As a result, $c$ has to vanish.
\end{proof}

\begin{rmk} The proposition above has an analogue in Finsler geometry. Recall that a Finsler metric is called Landsberg if the derivative of its Cartan torsion along geodesics is zero, i.e.,
\begin{gather*}\dot{I}_{ijk}=0.\end{gather*}
It can be shown that if the flag curvature of a Landsberg metric with non-vanishing Cartan torsion is constant, then it has to be zero \cite[Section~12.1]{BCS-Finsler}. By the proposition above, the condition $F_{abc;n}=\lambda F_{abc}$ can be used to translate $F_{abc;n}$ to zero. As a result of this translation, if the causal structure has vanishing \wsf curvature, then its \ssf curvature has to be zero. This observation suggests that causal structures satisfying $f_{abc}=\lambda F_{abc}$ are causal analogues of Landsberg spaces.
\end{rmk}

\subsection[$\beta$-integrable Lie contact structures]{$\boldsymbol{\beta}$-integrable Lie contact structures} \label{sec:semi-integrable-lie}

In this section it is shown how a causal structure with vanishing \wsf curvature descends to a~Lie contact structures on $\cN$. The construction is analogous to Grossman's notion of $\beta$-integrable Segr\'e structures arising from torsion-free path geometries \cite{GrossmanPath} (see also \cite{Mettler}).

Let $p\in \cC$ be a generic point, that is a point admitting an open neighborhood $U\subset \cC$ foliated by characteristic curves. As a result, the quotient map $\tau\colon U\ra \cN^{2n-1}$ defines the space of characteristic curves in $U$.

Recall from Section \ref{sec:char-curv} that a characteristic vector field $\bfv$ spans the unique degenerate direction of $\omega^0$ and $\exd\omega^0$, at every point, i.e.,
\begin{gather*}\bfv\im\omega^0=0,\qquad \bfv\im\exd\omega^0=0.\end{gather*}
It follows that $\cL_\bfv\omega^0=0$. Therefore, there exists a 1-form $\tilde\omega^0\subset \Gamma(T^*\cN)$ of rank $2n-1$ such that $\tau^*\tilde\omega^0=\omega^0$. As a result, the quasi-contact structure on $\cC$ defined by $\omega^0$ induces a contact distribution on $\cN$. By abuse of notation, $\omega^0$ is used to denote $\tilde\omega^0$ as the distinction is clear from the context.

Because
\begin{gather*}\bfv\im\omega^a=\bfv\im\theta^a=0,\end{gather*}
 the 1-forms $\omega^0,\dots,\theta^{n-1}$ introduce a coframe on $\cN$, and the $\{e\}$-structure $\cP$ fibers over~$\cN$. To understand the resulting structure on $\cN$, consider the flat model. Recall from Section~\ref{sec:flat-model-2} that the flat causal geometry corresponds to the null cone bundle of a hyperquadric $Q_{p+1,q+1}\subset\PP^{n+2}$, and its space of characteristic curves is the space of isotropic 2-planes denoted by $G_0(2,n+4)$. The resulting structure was first studied by Sato and Yamaguchi in~\cite{SYLieContactStr1} referred to as Lie contact structures. As a parabolic geometry, this geometry is modeled on $(B_{n-1},P_2)$ or $(D_n,P_2)$, for $n\geq 4$ or $(D_3,P_{1,2})$ (see Section~\ref{sec:norm-cart-conn-1}).

In this section it is shown how a causal structure with vanishing \wsf curvature descends to a Lie contact structures on~$\cN$. As was discussed in Section~\ref{sec:an-e-structure-1}, no attempt is made to introduce a Cartan connection for the Lie contact structure from the derived $\{e\}$-structure.

First, a definition of Lie contact structures in terms of a field of Segr\'e cones of type $(2,\sN)$ is given.
Let $\VV$ be a $2\sN$-dimensional vector space with a decomposition $\VV\cong\RR^2\otimes\RR^\mathsf{N}. $ An element $p\in \VV$ is said to be of rank one if and only if $p=a\otimes b$. A Segr\'e cone of type $(2,\sN)$, denoted by $S(2,\sN)$, is defined as the set of rank one elements of this decomposition. In terms of local coordinates $\big(z^1_{1},\dots,z^1_{\mathsf{N}},z^2_{1},\dots,z^2_{\mathsf{N}}\big)$ adapted to this decomposition, the Segr\'e cone is given by the points satisfying the homogeneous quadratic equations \begin{gather*}z^1_{\mu}z^2_{\nu}-z^2_{\mu}z^1_{\nu}=0,\end{gather*}
where $1\leq \mu,\nu\leq\mathsf{N}$. As a result, a Segr\'e cone of type $(2,\mathsf{N})$ has a double ruling. A $(\mathsf{N}-1)$-parameter ruling by 2-planes $E_v\cong \RR^2\otimes v$ where $v\in \RR^\mathsf{N}$. Such 2-planes are called $\alpha$-planes~\cite{AG-Conformal}. Another ruling of $S(2,\sN)$ is given by the 1-parameter family of $\mathsf{N}$-planes $u\otimes\RR^{\mathsf{N}}$ where $u\in\RR^2$. Such $\mathsf{N}$-planes are called $\beta$-planes.

The proposition below is used to show how a Lie contact structure is induced on $\cN$.
\begin{prop}\label{prop:segre-cone}
Given a causal structure with adapted coframe $(\omega^i,\theta_a)$, the span of the bilinear forms
\begin{gather} \label{eq:Segre-bilinear-form-Upsilon}
 \Upsilon^{ab}=\theta^a\circ\omega^b-\theta^b\circ\omega^a
\end{gather}
restricted to $H=\operatorname{Ker} \omega^0$ is invariant under the action of the structure group $G_5$. Moreover, $\cC$ has vanishing \wsf curvature, if and only if $\operatorname{span}\big\{\Upsilon^{ab}\big\}$ is invariant along characteristic curves.
\end{prop}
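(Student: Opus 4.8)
The plan is to treat the two assertions separately: the first by reading off the transformation law of the coframe from the structure group, the second by computing the Lie derivative of $\Upsilon^{ab}$ along the characteristic field via Cartan's formula and isolating the one torsion term that obstructs invariance.

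For the \textbf{invariance under $G_5$}, I would extract from the matrix \eqref{eq:G5-StrGroup} how $\omega^a$ and $\theta_a$ transform modulo $\omega^0$, namely $\omega^a\mapsto \bfa\bfb\,\bfA^a{}_b\,\omega^b$ and $\theta_a\mapsto \tfrac{\bfa}{\bfb}({}^t\!\bfA)_a{}^b\,\theta_b+\bfc\,({}^t\!\bfA\ve)_{ab}\,\omega^b$, the second mixing an $\omega^b$ into $\theta_a$ through the parameter $\bfc$. Substituting into $\Upsilon^{ab}=\theta^a\circ\omega^b-\theta^b\circ\omega^a$ and restricting to $H=\Ker\,\omega^0$, the cross term generated by $\bfc$ is a multiple of $\bfA^a{}_c\bfA^b{}_d\,\omega^c\circ\omega^d$; since $\omega^c\circ\omega^d$ is symmetric in $c,d$ while its coefficient gets antisymmetrized in $a,b$, this contribution vanishes identically. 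What survives is $\tilde\Upsilon^{ab}=\bfa^2\,\bfA^a{}_c\bfA^b{}_d\,\Upsilon^{cd}$, an invertible linear recombination of the original forms because $\Lambda^2\bfA$ is invertible for $\bfA\in\Orth(p,q)$. Hence $\mathrm{Span}\{\Upsilon^{ab}\}|_H$ is preserved.

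For the \textbf{second assertion}, let $\bfv$ be the characteristic field normalized by $\omega^n(\bfv)=1$, so $\bfv\im\omega^0=\bfv\im\omega^a=\bfv\im\theta_a=0$; since $\bfv\im\Upsilon^{ab}=0$, the condition is insensitive to the scaling of $\bfv$ and is therefore well defined. Using $\cL_\bfv=\bfv\im\exd+\exd(\bfv\im\cdot)$ together with \eqref{1st-causal-II} and \eqref{1st-causal-IV}, and writing $P^a{}_b:=\bfv\im\phi^a{}_b$, $\sigma:=\bfv\im(\phi_0+\phi_n)$, $\rho:=\bfv\im(\phi_0-\phi_n)$, I would compute, modulo $\omega^0$,
\[
\cL_\bfv\omega^a\equiv \theta^a-P^a{}_b\,\omega^b-\sigma\,\omega^a,\qquad
\cL_\bfv\theta_a\equiv -W_{anbn}\,\omega^b+P^b{}_a\,\theta_b-\rho\,\theta_a-(\bfv\im\pi_n)\,\omega_a,
\]
with $\cL_\bfv\theta^a=\ve^{ab}\cL_\bfv\theta_b$. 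The essential point is that the \emph{only} torsion term surviving the contraction is the \wsf curvature $-W_{anbn}\omega^b$; everything else is either the tautological $\theta^a$ or a section-dependent connection contraction.

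Assembling $\cL_\bfv\Upsilon^{ab}=(\cL_\bfv\theta^a)\circ\omega^b+\theta^a\circ(\cL_\bfv\omega^b)-(a\leftrightarrow b)$ and collecting modulo $\omega^0$, I expect the $\theta\circ\theta$ terms and the $\omega\circ\omega$ term coming from $\bfv\im\pi_n$ to cancel under the antisymmetrization in $a,b$, while the connection contractions reorganize into $P^a{}_c\Upsilon^{bc}-P^b{}_c\Upsilon^{ac}-(\rho+\sigma)\Upsilon^{ab}$, which already lies in $\mathrm{Span}\{\Upsilon^{cd}\}$. The residue is then the pure $\omega\circ\omega$ term $-W^a{}_{ncn}\,\omega^c\circ\omega^b+W^b{}_{ncn}\,\omega^c\circ\omega^a$. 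Because $\{\omega^c\circ\omega^d,\ \theta^c\circ\omega^d,\ \theta^c\circ\theta^d\}$ are independent in $S^2H^*$ and $\mathrm{Span}\{\Upsilon^{ab}\}$ sits entirely in the mixed $\theta$--$\omega$ piece, invariance along $\bfv$ holds exactly when this residue vanishes; contracting the resulting identity in one index pair and using $W^a{}_{nan}=0$ from \eqref{eq:symmetris-of-1st-order-StrEqns} together with $n\geq 4$ gives $(n-1)W^a{}_{nen}=0$, so the residue vanishes iff $W_{anbn}=0$, i.e.\ iff the \wsf curvature is zero. The part demanding the most care is precisely this last bookkeeping: verifying that every section-dependent contraction ($P^a{}_b$, $\sigma$, $\rho$, $\bfv\im\pi_n$) genuinely lands in $\mathrm{Span}\{\Upsilon^{cd}\}$, so that the sole invariant obstruction is the tensorial quantity $W_{anbn}$, and confirming that no $\omega\circ\omega$ contribution is concealed inside the mixed part. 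Once the $S^2H^*$ components are cleanly separated, both directions of the equivalence follow simultaneously.
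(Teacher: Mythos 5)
Your proposal is correct and follows essentially the same route as the paper: invariance under $G_5$ is read off from the matrix form \eqref{eq:G5-StrGroup}, and the second claim is settled by computing $\cL_{\bfv}\Upsilon^{ab}$ from the structure equations \eqref{eq:1st-order-str-eqns}, with the \wsf curvature appearing as the sole pure $\omega\circ\omega$ residue outside $\mathrm{Span}\{\Upsilon^{cd}\}$. You in fact supply details the paper leaves implicit—notably the vanishing of the $\bfc$-cross term and $\bfv\im\pi_n$ term under antisymmetrization, and the trace contraction showing the residue vanishes only if $W_{anbn}=0$, which cleanly establishes the converse direction that the paper's proof only asserts.
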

\begin{proof}By the matrix form of the structure group $G_5$ in \eqref{eq:G5-StrGroup}, it follows that the span of $\Upsilon^{ab}$ modulo $I\{\omega^0\}$ is invariant under the action of~$G_5$.

Let $\bfv$ denote a characteristic vector field. To verify the second claim it suffices to note that~\eqref{eq:1st-order-str-eqns} implies
\begin{gather*}\cL_{\bfv}\Upsilon^{ab}=A^{ab}{}_{cd}\Upsilon^{cd}+\big(\bfv\im\omega^n\big)\big(W^b{}_{ncn}\omega^a-W^a{}_{ncn}\omega^b\big)\circ\omega^c,\end{gather*}
where $A^{ab}{}_{cd}=\bfv\im \big(2\phi_0\delta^a_c\delta^b_d+\delta^b_d\phi^a{}_c-\delta^a_c\phi^b{}_d\big)$. Thus, if the condition $W^a{}_{nbn}=0$ is satisfies, i.e., causal structure has vanishing \wsf curvature, then $\operatorname{span}\big\{\Upsilon^{ab}\big\}$ is invariant along the characteristic curves.
\end{proof}

As a result of the proposition above, the set of tangent vectors $\bfu\in H$ satisfying $\Upsilon^{ab}(\bfu,\bfu)=0$ is well-defined and gives a Segr\'e cone of type $(2,n-1)$ in the contact distribution $H\subset T\cN$.

\begin{df}\label{def:Lie-contact-str}
 A manifold $N^{2\sN+1}$ with a contact distribution $H$ is said to have a Lie contact structure of signature $(p,q)$, $p+q=\sN$ if there is a smoothly varying family of Segr\'e cones $S(2,\sN)\subset H_x$ such that at every point $x\in N$, its $\beta$-planes are endowed with conformally invariant inner product of signature $(p,q)$ and are Lagrangian with respect to the symplectic 2-form that is induced by the contact 1-form on the contact distribution.

 A Lie contact structure is called \emph{$\beta$-integrable} if there exists a 1-parameter family of foliations by Legendrian submanifolds with the property that given a point $x\in N$ and a $\beta$-plane in $S(2,\sN)\subset H_x$, there passes a unique member of the family through $x$, tangent to that $\beta$-plane.
\end{df}
There are other equivalent definitions of a Lie contact structure discussed in \cite{ZadnikLieContact}.

\begin{rmk}\label{rmk:bundle-of-beta-planes} $\beta$-integrability condition gives rise to a foliation of the bundle $ \tilde \rho\colon \cE_\beta\ra\cN$ where~$\cE_\beta$ denotes the bundle of $\beta$-planes of the Segr\'e cones. Because $\beta$-planes are Lagrangian subspaces of the contact distribution, $\cE_\beta$ is a sub-bundle of $\mathrm{LG}(\cN)$ where $\rho\colon \mathrm{LG}(\cN)\ra\cN$ is the bundle of Lagrangian--Grassmannians of the contact distribution $H$. Since, the bundle map~$\tilde\rho$ is the restriction of~$\rho$ to the sub-bundle $\cE_\beta$, by abuse of notation, it is denoted by $\rho$ as well.
\end{rmk}

The theorem below gives a characterization of causal structures with vanishing \wsf curvature in terms of $\beta$-integrable Lie contact structures induced on their space of characteristic curves.
\begin{thm}\label{thm:Lie-contact-conf-flat} Given a causal structure $(\cC,M)$ with vanishing \wsf curvature of signature \smash{$(p+1,q+1)$} its space of characteristic curves admits a $\beta$-integrable Lie contact structure of signature~$(p,q)$. Conversely, any $\beta$-integrable Lie contact structure of signature~$(p,q)$ on a~space~$\cN$ induces a causal structure of signature $(p+1,q+1)$ with vanishing \wsf curvature on its space of ruling Legendrian submanifolds.
\end{thm}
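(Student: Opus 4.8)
The plan is to realize this as a twistor-type (correspondence-space) construction, building the geometric content by hand and deferring the abstract equivalence to \v Cap's theorem \cite{CapCorr}. I first treat the forward direction. Recall that a characteristic vector field $\bfv$ satisfies $\bfv\im\omega^0=0$ and $\bfv\im\exd\omega^0=0$, hence $\cL_\bfv\omega^0=0$, so $\omega^0$ descends to a $1$-form of rank $2n-1$ on $\cN$ whose kernel $H$ is a contact distribution. By Proposition \ref{prop:segre-cone}, vanishing \wsf curvature is exactly the condition that $\mathrm{Span}\{\Upsilon^{ab}\}$, with $\Upsilon^{ab}$ as in \eqref{eq:Segre-bilinear-form-Upsilon}, be invariant along the characteristic curves; therefore it descends to $\cN$ and cuts out a field of Segr\'e cones $S(2,n-1)$ inside $H$. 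Writing a $\beta$-plane as $\mathrm{span}\{s\tfrac{\partial}{\partial\omega^a}+t\tfrac{\partial}{\partial\theta^a}\}_{a=1}^{n-1}$, one checks from \eqref{1st-causal-I} that the induced symplectic form $\exd\omega^0|_H=-\ve_{ab}\,\theta^b\w\omega^a$ vanishes on each $\beta$-plane (by symmetry of $\ve_{ab}$ against the antisymmetric wedge), so $\beta$-planes are Lagrangian; the normalized second fundamental form $\ve_{ab}$, which the $\Orth(p,q)$-part of $G_5$ in \eqref{eq:G5-StrGroup} preserves up to scale, equips them with a conformal inner product of signature $(p,q)$. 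This furnishes a Lie contact structure in the sense of Definition \ref{def:Lie-contact-str}.

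For semi-integrability I would use the fibers $\cC_x$ of $\pi:\cC\to M$ directly. They are the $(n-1)$-dimensional leaves of the integrable distribution $\Delta$, lie in $\Ker\,\omega^0$, and are transverse to the characteristic curves, so each $\tau(\cC_x)$ is a Legendrian submanifold of $\cN$ whose tangent space at any point is the projected vertical plane $\tau_*\big(\mathrm{V}T_p\cC\big)$, a $\beta$-plane. A dimension count shows $\{\tau(\cC_x)\}_{x\in M}$ is an $(n+1)$-parameter family through each of whose points passes a $1$-parameter subfamily (one member for each $x$ lying over the null geodesic $\pi(\gamma)$). The crucial point is that, as $p$ runs over the characteristic curve $\gamma$, the projected plane $\tau_*(\mathrm{V}T_p\cC)$ sweeps out the \emph{whole} $\PP^1$ of $\beta$-planes of the Segr\'e cone at $\gamma$; the infinitesimal motion is governed by $\cL_\bfv$ applied to the adapted coframe and stays within the Segr\'e cone precisely because $\mathrm{Span}\{\Upsilon^{ab}\}$ is $\bfv$-invariant, i.e. because the \wsf curvature vanishes. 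Thus the tangent $\beta$-planes of the family rule the Segr\'e cone, which is semi-integrability, and $M$ is recovered as the space of these ruling Legendrian submanifolds.

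For the converse, start from a semi-integrable Lie contact structure on $\cN$ and take $M$ to be the (generically defined) space of its ruling Legendrian submanifolds. Form the $\PP^1$-bundle $\rho:\cE_\beta\to\cN$ of $\beta$-planes from Remark \ref{rmk:bundle-of-beta-planes}; it has dimension $2n$ and will play the role of $\cC$. By semi-integrability there is through each $(\gamma,E)\in\cE_\beta$ a unique ruling leaf $L$ tangent to $E$ at $\gamma$, and the assignment $(\gamma,E)\mapsto L$ defines a submersion $\pi:\cE_\beta\to M$ with $(n-1)$-dimensional fibers. At a point $x=L\in M$, the tangent lines along $L$ transported into $T_xM$ carve out a projective hypersurface $\cC_x\subset\PP T_xM$; the conformal inner product and Lagrangian property of the $\beta$-planes force the projective second fundamental form of $\cC_x$ to be non-degenerate of signature $(p,q)$, so $(M,\cC)$ is a causal structure of signature $(p+1,q+1)$. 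Feeding $(M,\cC)$ into the forward construction returns the initial data, and by Proposition \ref{prop:segre-cone} its \wsf curvature vanishes, since that vanishing is equivalent to the descent of the Segr\'e structure one began with.

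The main obstacle is twofold. In the forward direction it is the ruling statement of the previous paragraph: one must show that the tangent $\beta$-planes of $\{\tau(\cC_x)\}$ cover the Segr\'e cone at each $\gamma$, which is where vanishing \wsf curvature is used essentially (a fixed-type $\beta$-plane distribution is \emph{not} integrable, because the characteristic flow rotates the $\beta$-planes through $\PP^1$). In the converse it is verifying that the leaf space $M$ is a manifold at generic points, that $\cC_x$ is genuinely tangentially non-degenerate, and that the two constructions are mutually inverse. I would discharge the rigor of both by invoking \v Cap's correspondence-space theorem \cite{CapCorr}: causal structures are regular normal parabolic geometries of type $(G,P_{1,2})$ and Lie contact structures of type $(G,P_2)$ with $P_{1,2}\subset P_2$, and the functorial equivalence between correspondence spaces and geometries whose essential curvature is a pullback identifies vanishing \wsf curvature with the condition that the Lie contact structure on $\cN$ descend to $M$; the explicit Segr\'e-cone and ruling description above then supplies the geometric interpretation, in the spirit of \cite{GrossmanPath}, that the abstract theorem does not give.
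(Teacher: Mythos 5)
Your forward direction is essentially the paper's own proof: descend $\omega^0$ via $\cL_\bfv\omega^0=0$, obtain the Segr\'e cones from Proposition \ref{prop:segre-cone}, check that the $\beta$-planes are Lagrangian and carry the conformal $(p,q)$ inner product, and realize the ruling Legendrian family as $\{\tau(\cC_x)\}$. The ``sweep out the whole fiber of $\beta$-planes'' point you flag is exactly the content of the paper's Lemma \ref{lem:canonical-diffeo-Lie-cont-conf-flat} (the bundle diffeomorphism $\varpi:\cC\ra\cE_\beta$, $\varpi(\gamma_{t_0})=\tau_*\cD_{t_0}$), and you and the paper treat its surjectivity at the same local, generic-point level of rigor.

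The gap is in the converse. The step that actually needs proof --- that the incidence structure on the leaf space $M$ has tangentially non-degenerate fibers of signature $(p,q)$, i.e.\ is a causal structure --- is asserted (``the conformal inner product and Lagrangian property force\dots''), and the appeal to \v Cap's theorem does not discharge it automatically. The paper instead works on $\cE_\beta$ directly: it writes the Segr\'e cone as the zero locus of $\eta^a\circ\zeta^b-\eta^b\circ\zeta^a$, sets $\omega^a=\rho^*(t\eta^a+(1-t)\zeta^a)$ and $\theta_a=\cL_\bfv\omega^a$, uses semi-integrability to conclude that $\{\omega^0,\cdots,\omega^n\}$ is a Frobenius system (this is what kills the torsion $T^a{}_{bc}$), and then derives $\exd\omega^0\equiv-h_{ab}\theta^a\w\omega^b$ modulo $I\{\omega^0\}$ with $h_{ab}$ symmetric (from $\exd^2\omega^0=0$) and non-degenerate (from the rank condition on the contact form); these are precisely the data that feed the coframe adaptations of Section \ref{sec:structure-equations} and the distribution characterization of Section \ref{sec:char-caus-struct}, and the fibers of $\rho$ then become the characteristic curves. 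As for invoking \cite{CapCorr}: (i) \v Cap's criterion for a geometry to be locally a correspondence space is the vanishing of the \emph{full} curvature upon insertion of the distinguished vertical directions, not of a harmonic component, so identifying it with vanishing \wsf curvature requires the Bianchi-identity analysis showing $W_{anbn}$ generates the whole obstruction (this is what \eqref{eq:Wnabc-from-Wnanb} and \eqref{eq:2nd-order-biachies} provide); (ii) the identification of causal structures with regular normal parabolic geometries of type $(G,P_{1,2})$ is canonical only for $n\geq 4$ --- in dimension four ($n=3$), which the theorem covers, causal structures are proper subgeometries of type $(D_3,P_{1,2,3})$ (integrability of the fibers is the vanishing of an extra harmonic component), so the functorial equivalence needs a supplementary argument there; and (iii) you would still have to prove that semi-integrability on the Lie contact side is equivalent to the curvature condition making the correspondence space regular with involutive $\Delta$, since otherwise every Lie contact structure would yield a causal structure and the hypothesis would be vacuous. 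None of (i)--(iii) is immediate, which is presumably why the paper proves the converse by hand and cites \v Cap only as context.
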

\begin{proof}To see how $\cN$ is endowed with a Lie contact structure, firstly note that by Proposi\-tion~\ref{prop:segre-cone} the contact distribution carries a Segr\'e structure of type $(2,n-1)$. By the definition of~$\Upsilon^{ab}$, it follows that the $\beta$-planes, i.e., the 1-parameter ruling of the Segr\'e cone, are given by
 \begin{gather} \label{eq:beta-plane-parameter}
\Ker\big\{A\theta^a+B\omega^a\,|\, a=1,\dots,n-1\big\},
\end{gather}
$[A:B]\in\PP^1$. Moreover, the inner product $\ve_{ab}\theta^a\circ\theta^b$ is of signature $(p,q)$ and its conformal class is well-defined on each $\beta$-plane, i.e., it is conformally invariant under the action of $G_5$ modulo $\Ibas=I\{\omega^0,\dots,\omega^n\}$. Finally, to show $\beta$-planes are Lagrangian consider a $\beta$-plane~$E_{\beta_1}$ corresponding to $[A:B]\in\PP^1$ in~\eqref{eq:beta-plane-parameter}. The tangent vectors $(v_1,\dots,v_{n-1})$ where $v_a=B\frac{\partial}{\partial\theta^a}-A\frac{\partial}{\partial\omega^a}$ define a~basis for $E_{\beta_1}$. Because $\exd\omega^0\equiv \ve_{ab}\theta^a\w\omega^b$, modulo $I\{\omega^0\}$, it follows that $\exd\omega^0(v_a,v_b)=0$. Hence, $E_{\beta_1}$ is Lagrangian with respect to the conformal symplectic form~$\exd\omega^0$.

 As for the $\beta$-integrability of these Lie contact structures, the following lemma will be useful.
 \begin{lem}\label{lem:canonical-diffeo-Lie-cont-conf-flat}
 Given a causal structure with vanishing \wsf curvature, there is a canonical bundle diffeomorphism $\varpi\colon \cC\ra\cE_\beta$ between the fiber bundles $\tau\colon \cC\ra \cN$ and $\rho\colon \cE_\beta\ra\cN$,.
 \end{lem}
 \begin{proof} Let $\gamma_t=(x(t);y(t))$ be a characteristic curve of $\cC$ which corresponds to the point $[\gamma]\in\cN$. Let $\cD$ be the completely integrable distribution of $T\cC$ given by the vertical tangent spaces $T_{y}\cC_{x}:=\Ker\cI_{\mathsf{bas}}$ with $\cD_t:=T_{y(t)}\cC_{x(t)}$. By the discussion above, it follows that, at $t=t_0$, $\tau_*\cD_{t_0}\subset H_{[\gamma]}$ is isotropic, $(n-1)$-dimensional and lies in the zero locus of $\Upsilon^{ab}$, i.e., $\tau_*\cD_{t_0}$ is a~$\beta$-plane. By defining
 \begin{gather*} %\label{eq:varpi-bundle-diffeo}
 \varpi(\gamma_{t_0})=\tau_*\cD_{t_0},
 \end{gather*}
one obtains the desired diffeomorphism.
 \end{proof}

Continuing the proof of Theorem \ref{thm:Lie-contact-conf-flat}, note that the fibers $\cC_x$, as the integral manifolds of $\cD$, give a foliation of $\cE_\beta$ via the diffeomorphism~$\varpi$. Because of the transversality of the fibers $\cC_x$ and characteristic curves $\gamma$ passing through $x$, it follows that the subsets $\tau(\cC_x)\subset \cN$ are immersed as $(n-1)$-dimensional submanifolds of~$\cN$. These submanifolds are Legendrian because it was shown that their tangent spaces, $\tau_*\cD$, are $\beta$-planes, i.e., Lagrangian subspaces of the contact distribution which rule the Segr\'e cone.

The converse part of the theorem can be shown as follows. Let $\omega^0$ be a~contact form. Since a~Segr\'e cone of type $(2,n-1)$ admits a 1-parameter ruling by $(n-1)$-planes, one can find 1-forms $\{\eta^a,\zeta^a \,|\, a=1,\dots,n-1\}$ complementing $\omega^0$ in $T^*\cN$ such that the Segr\'e cone can be expressed as the vanishing set of the bilinear forms
\begin{gather*}\Upsilon^{ab}=\eta^a\circ\zeta^b-\eta^b\circ\zeta^a.\end{gather*}
As a result, the 1-parameter family of ruling $(n-1)$-planes of the Segr\'e cone can be expressed as $\Ker\{t\eta^a+(1-t)\zeta^a\,|\,a=1,\dots,n-1\}$ where $t\in (0,1)$. Let $\rho\colon \cE_\beta\ra\cN$ denote the bundle of $\beta$-planes of the Segr\'e cone. A point $p\in\cE_\beta$ represents a $\beta$-plane of the contact distribution $H$ which is Lagrangian with respect to the conformal symplectic structure on the contact distribution. Thus, the tautological bundle $K\subset T\cE_\beta$ can be defined with fibers
\begin{gather*}K_p=\big\{v\in T_p\cE_\beta \,|\, \rho_*|_p(v)\in p\subset T_{\rho(p)}\cN\big\}.\end{gather*}
As a result, the subspaces $\rho_*(K_p)$ rule the Segr\'e cone $S(2,n-1)\subset H_{\rho(p)}\subset T_{\rho(p)}\cN$. The fibers of $\rho\colon \cE_\beta\ra \cN$ are one dimensional and parametrize the ruling subspaces $\rho_*(K_p)$. Let $\bfv$ be a~vertical vector field of the fibration $\rho\colon \cE_\beta\ra \cN$ satisfying $\rho_*(\bfv)=\frac{\partial}{\partial t}$, and by abuse of notation, let a multiple of $\rho^*(\omega^0)$ be denoted by $\omega^0$ as well. Hence, $\omega^0\subset T^*\cE_\beta$ is a quasi-contact 1-form.
Defining $\omega^a=\rho^*(t\eta^a+(1-t)\zeta^a)$, it follows that $\cL_\bfv\omega^a=\rho^*(\eta^a-\zeta^a)$, and, consequently,
\begin{gather*}\omega^1\w\cdots\w\omega^{n-1}\w\cL_\bfv\omega^1\w\cdots\w\cL_\bfv\omega^{n-1}\neq 0\qquad\operatorname{mod} \ I\big\{\omega^0\big\}.\end{gather*}
The reason that it is non-zero, modulo $I\{\omega^0\}$, is that the Segr\'e cone is a subset of the contact distribution.

Hence, locally, the tautological bundle is given by $K=\Ker\{\omega^0,\dots,\omega^{n-1}\}$.
As a result, the 1-forms $\omega^0, \omega^a$, $\theta_a=\cL_\bfv\omega^a$ which are semi-basic with respect to the fibration $\rho\colon \cE_\beta\rightarrow\cN$ and the 1-form $\omega^n$, which is vertical, span $T^*_p\cE_\beta$, and one has
\begin{gather*}\exd\omega^a\equiv -\theta^a\w\omega^n+T^a{}_{bc}\theta^a\w\theta^b \qquad\operatorname{mod} \ I\big\{\omega^0,\dots,\omega^{n-1}\big\}.\end{gather*}
Since the Lie contact structure is assumed to be $\beta$-integrable, the lift of its associated 1-parameter family of Legendrian submanifolds foliate $\cE_\beta$ by quasi-Legendrian submanifolds. The tangent distribution to this foliation is a corank sub-bundle of the tautological subbundle $K$ given by $\Ker\{\omega^0,\dots,\omega^n\}$. As a result, $\omega^i$'s form a Pfaffian system which implies $T^a{}_{bc}=0$.
Moreover, knowing that $K=\Ker\{\omega^0,\dots,\omega^{n-1}\}$ and that $\omega^0$ is semi-basic, one arrives at
\begin{gather*}\exd\omega^0\equiv -h_{ab}\theta^a\w\omega^b \qquad\operatorname{mod} \ I\big\{\omega^0\big\}.\end{gather*}
In the identity $\exd^2\omega^0=0$, the vanishing of the 3-form $\theta^a\w\theta^b\w\omega^n$ implies that $h_{ab}=h_{ba}$.

Let $M$ be the leaf space of the quasi-Legendrian foliation of $\cE_\beta$, via the quotient map $\pi\colon \cE_\beta\rightarrow M$. Because the tangent planes of the leaves are given by $\Ker\{\omega^0,\dots,\omega^n\}$, the 1-forms $\{\omega^0,\dots,\omega^n\}$ are semi-basic with respect to the fibration $\pi\colon \cE_\beta\ra M$, and the 1-forms $\theta^a$ are vertical. Moreover, the coframe on $\cE_\beta$ is adapted to the flag~\eqref{Conframe-adaptation-00}. Note that the tautological bundle $K$ is isomorphic to the tautological bundle introduced in Section~\ref{sec:adapted-flag}.

According to Section \ref{sec:structure-equations}, the complete integrability of the Pfaffian system $\Ibas$ and the relations
\begin{gather*}
 \exd\omega^0\equiv -\theta_a\w\omega^a\qquad\operatorname{mod} \ I\big\{\omega^0\big\},\\
 \exd\omega^a\equiv -h^{ab}\theta_b\w\omega^n \qquad\operatorname{mod} \ I\big\{\omega^0,\dots,\omega^{n-1}\big\},
\end{gather*}
where $\theta_a:=h_{ab}\theta^b$, suffice to carry out the coframe adaptations and obtain the $\{e\}$-structure that characterizes a causal structure. Note that the distribution given by $\Ker\{\omega^0,\dots,\omega^{n-1}\}$ characterizes a causal structure according to Section~\ref{sec:char-caus-struct}. The one dimensional fibers of \smash{$\rho\colon \cE_\beta\ra \cN$} are mapped to the characteristic curves of the causal structure.

Finally, since the vanishing set of the quadratic forms $\Upsilon^{ab}$ defined in \eqref{eq:Segre-bilinear-form-Upsilon} is preserved, the \wsf curvature of the obtained causal structure has to vanish.
\end{proof}

\begin{appendix}
\section{Full structure equations}\label{cha:append-full-struct}

Appendix~\ref{cha:append-full-struct} contains the computations needed to associate an $\{e\}$-structure to a causal geometry. At the end, the symmetries and Bianchi identities for the torsion elements are expressed.

\subsection[Prolongation and an $\{e\}$-structure]{Prolongation and an $\boldsymbol{\{e\}}$-structure}\label{sec:prolongation-an-e}
Following the first order structure equations derived in \eqref{eq:1st-order-str-eqns}, the method of equivalence can be used to associate a preferred choice of coframing to a causal structure. The first step involves prolonging the first order structure equations.

\subsubsection{Prolonged structure group}\label{sec:prol-struct-group}
Using the pseudo-connection in \eqref{eq:1st-order-str-eqns}, one finds that the Cartan characters\footnote{See \cite{Gardner-Book, Olver-Equiv-Book} for an account.} are
\begin{gather*}c_1=2n,\qquad c_2=n-1,\qquad c_i=n-i,\qquad \mathrm{for}\ 3\le i\le n-1.\end{gather*}

With respect to the basis $\{\omega^i,\theta^a,\phi_0,\phi^a{}_{b},\phi_n,\gamma_a,\pi_n,\pi_a\}$, the prolonged structure group, $G^{(1)}_1$, is
\begin{gather} \label{eq:parametric-prolonged-group}
\begin{pmatrix}
\II_{2n} &0\\
\bfR & \II_{p}
\end{pmatrix},\qquad \text{such\ that}\quad
\bfR=\begin{pmatrix}
\bfd & 0 & 0 & 0\\
0 & 0 & 0 & 0\\
 0 & 0 & 0 & 0\\
0 & \bfd & 0 & 0 \\
\bff & 0 & 0 & 0 \\
\bfF & \bff & 0 & \bfd
\end{pmatrix},
\end{gather}
where $p=\frac{1}{2}(n^2+n+4)$, $\bfE\in \RR^{n-1}$, and $\bfd,\bff\in\RR$. The Maurer--Cartan forms of the prolonged group can be written as
\begin{gather} \label{eq:MC-prolonged-group}
 \begin{pmatrix}
0 &0\\
\bfM & 0
\end{pmatrix},\qquad \mathrm{where}\quad
\bfM=\begin{pmatrix}
\pi_0 & 0 & 0 & 0\\
0 & 0 & 0 & 0\\
 0 & 0 & 0 & 0\\
0 & \pi_0 & 0 & 0 \\
\sigma & 0 & 0 & 0 \\
\tau_a & \sigma & 0 & \pi_0
\end{pmatrix},
\end{gather}
Since $\dim G^{(1)}_1=n+1$, the relation \begin{gather*}\dim G^{(1)}_1< \sum^{n-1}_{j=1} jc_j,\end{gather*}
holds for all $n\geq 3$. Thus, equations~\eqref{eq:1st-order-str-eqns} are not involutive, and because an $\{e\}$-structure is not obtained, according to Cartan's method of equivalence, one needs to prolong the structure equations.

\subsubsection{Part of the second order structure equations}\label{sec:second-order-struct}
To avoid taking too much space for computations and for later use, the following 2-forms are introduced
\begin{gather}
\Omega^0=\exd\omega^0+2\phi_0\w\omega^0+\theta_a\w\omega^a,\nonumber\\
\Omega^a=\exd\omega^a+\gamma^a\w\omega^0+\big(\phi^a{}_{b}+(\phi_0+\phi_n)\delta^a{}_{b}\big)\w\omega^b +\theta^a\w\omega^n,\nonumber\\
\Omega^n=\exd\omega^n+\gamma_a\w\omega^a+2\phi_n\w\omega^n,\nonumber\\
\Theta^a= \exd\theta^a+\pi^a\w\omega^0+\pi_n\w\omega^a +\big(\phi^a{}_{b}-(\phi_0-\phi_n)\delta^a{}_{b}\big) \w\theta^b,\nonumber\\
 \Phi_0= \exd\phi_0+\pi_0\w\omega^0+\half\pi_a\w\omega^a-\half\gamma_a\w\theta^a,\nonumber\\
 \Phi^a{}_{b}=\exd\phi^a{}_{b}+\phi^a{}_{c}\w\phi^c_{b}+\theta^a\w\gamma_b-\theta_b\w\gamma^a -\omega^a\w\pi_b+\omega_b\w\pi^a,\nonumber\\
 \Phi_n= \exd\phi_n+\half\pi_a\w\omega^a+\pi_n\w\omega^n+\half\gamma_a\w\theta^a,\nonumber\\
 \Gamma_a= \exd\gamma_a-\omega_a\w\pi_0-\omega^n\w\pi_a -\big(\phi^b{}_{a}+(\phi_0-\phi_n)\delta^b{}_{a}\big)\w\gamma_b,\nonumber\\
 \Pi_n= \exd\pi_n-\theta^a\w\pi_a-2\phi_n\w\pi_n,\nonumber\\
 \Pi_a=\exd\pi_a-\theta_a\w\pi_0-\big(\phi^b{}_{a}+(\phi_0+\phi_n)\delta^b{}_{a}\big)\w\pi_b-\gamma_a\w\pi_n. \label{eq:curvatures1}
 \end{gather}
As a result, structure equations \eqref{eq:1st-order-str-eqns} can be written as
\begin{gather*}% \label{eq:causal-1st-order-2}
 \Omega^0=0,\qquad \Omega^a=-F^a{}_{bc}\theta^b\w\omega^c-K^a{}_{b}\theta^b\w\omega^0,\qquad
 \Omega^n=-L_a\theta^a\w\omega^0,\\
 \Theta_a=-f_{ ab c}\theta^b\w\omega^c+\half W_{anbc}\omega^b\w\omega^c+ W_{bncn}\omega^c\w\omega^n.
\end{gather*}

From the Lie algebra of the prolonged group in \eqref{eq:MC-prolonged-group}, it follows that
\begin{gather*}\Pi_n=-\sigma\w\omega^0+P_n,\qquad \Pi_a=-\tau_a\w\omega^0-\sigma\w\omega_a+P_a,\end{gather*}
where the 2-forms $P_n$ and $P_a$ are generated by $\{\omega^i,\theta^a,\phi_0,\phi^a{}_b,\phi_n,\gamma_a,\pi_n,\pi_a\}$. Note that the terms involving the 1-form $\pi_0$ in~\eqref{eq:MC-prolonged-group}, are included within the definition of $\Gamma_a$, $\Pi_n$ and $\Pi_a$.

Differentiating \eqref{1st-causal-I}, gives
\begin{gather*}%\label{eq:causal-d2-omega0}
2\Phi_0\w\omega^0+\Theta_a\w\omega^a=0.
\end{gather*}
As a result of \eqref{1st-causal-IV} and the symmetries in~\eqref{eq:symmetris-of-1st-order-StrEqns}, it follows that $\Theta_a\w\omega^a=0$, and therefore, $\Phi_0= \zeta\w\omega^0$ for some 1-form $\zeta$. By replacing
\begin{gather*}\pi_0\lmt\pi_0-\zeta,\end{gather*}
one arrives at
\begin{gather} \label{eq:causal-Phi_0-1}
 \Phi_0=0.
\end{gather}

Differentiating \eqref{1st-causal-III}, and using the infinitesimal group action \eqref{infin-III}, it is obtained
\begin{subequations}\label{eq:causal-Gamma_a-Phi_n}
\begin{gather}
 \Gamma_a= \half W_{0aij}\omega^i\w\omega^j+ E_{0ab0}\theta^b\w\omega^0+\half C_{abc}\theta^b\w\theta^c\nonumber\\
\hphantom{\Gamma_a=}{} -F_{abc}\gamma^b\w\theta^c-L_a\pi_n\w\omega^0+\tau_{ab}\w\omega^b+\xi_a\w\omega^n,\label{eq:causal-Gamma_a-1} \\
 \Phi_n= W_{nnij}\omega^i\w\omega^j-\half \ell_a\theta^a\w\omega^0+\nu\w\omega^n+\half \xi_a\w\omega^a, \label{eq:causal-Phi_n-1}
\end{gather}
\end{subequations}
for some coefficients $E_{nab0}$, $C_{abc}$, where $\ell_a=L_{a;n}$, and the yet undetermined 1-forms $ \nu$, $\xi_a$, and $\tau_{ab}=\tau_{ba}$ which are congruent to zero modulo~$\Itot$. The reason for using $W_{nnij}$ instead of~$\half W_{nnij}$ in~\eqref{eq:causal-Phi_n-1} becomes clear when the replacements \eqref{eq:pi_0-pi_n-changed-1} are imposed.

By differentiating \eqref{1st-causal-II}, it follows that
\begin{gather*}\xi_a=k_{ab}\,\theta^b,\end{gather*}
where $k_{ab}:=K_{ab;n}$.

Now $\Phi^a{}_{b}$ can be determined as a result of differentiating \eqref{1st-causal-II} and replacing $\Phi_0$ and $\Phi_n$ by the expressions \eqref{eq:causal-Phi_0-1} and \eqref{eq:causal-Phi_n-1}. Using the infinitesimal group action on $K_{ab},F_{abc}$ given by~\eqref{infin-I} and~\eqref{infin-II}, and their symmetries in \eqref{eq:symmetris-of-1st-order-StrEqns}, and \eqref{1st-causal-IV}, the derivative of~\eqref{1st-causal-II} yields
\begin{gather*}%\label{eq:causal-Phi_ab-mod-omega_0-omega_n}
 0\equiv\big(\Phi^a{}_{b}+ \Phi_n\delta^a{}_{b} \big)\w\omega^b
+K_{ab} \theta^b\w\theta_c\w\omega^c -F_{abc} f^b{}_{de} \theta^d\w\omega^e\w\omega^c\\
\hphantom{0\equiv}{} +F_{abc;d}\,\omega^d\w\theta^b\w\omega^c
 +F_{abc;\bd} \theta^d\w\theta^b\w\omega^c+F_{abc} F^c{}_{de} \theta^b\w\theta^d\w\omega^e \\
\hphantom{0\equiv}{} +\half F_{abc} W^b{}_{nde} \omega^d\w\omega^e\w\omega^c \qquad\operatorname{mod} \ I\big\{\omega^0,\omega^n\big\}.
\end{gather*}
Multiplied by the $(n-1)$-form $\eta=\theta^1\w\cdots\w\theta^{n-1}$, the above equation results in
\begin{gather*}
 \big(\Phi^a{}_{b}+ \Phi_n\delta^a{}_{b} \big)\w\omega^b\w\eta\equiv \half -F_{aeb}W^b{}_{ncb} \omega^c\w\omega^d\w\omega^b\w\eta \qquad\operatorname{mod} \ I\big\{\omega^0,\omega^n\big\}.
\end{gather*}
By Cartan's lemma, it follows that
\begin{gather*}
 \big(\Phi^a{}_{b}+ \Phi_n\delta^a{}_{b} \big)\w\eta\equiv -\big(\zeta^a{}_{bc}\w\omega^c+\half F_{abe}W^e{}_{ncd}\omega^c\w\omega^d\big) \w\eta \qquad\operatorname{mod} \ I\big\{\omega^0,\omega^n\big\},
\end{gather*}
for some 1-form $\zeta^a{}_{bc}$, where $\zeta^a{}_{bc}\equiv\zeta^a{}_{cb}$ modulo $\Itot$.

Fixing $c=C$, after taking the wedge product of $\Phi^a{}_{b}+\Phi_n\delta^a{}_{b}$ with all the 1-forms $\omega^i$, such that $ i\neq C$, and using \eqref{eq:causal-Phi_n-1}, the result is
\begin{gather*}\Phi^a{}_{b}\w\omega^0\w\cdots\w\widehat{\omega^C}\w\cdots\w\omega^n\w\eta=\zeta^a{}_{bC} \w\omega^C\w\omega^0\w\cdots\w\widehat{\omega^C}\w\cdots\w\omega^n\w\eta.\end{gather*}
Carrying out the same procedure for $\Phi^b{}_{a}$, using $\Phi_{ab}+\Phi_{ba}=0$, it follows that
\begin{gather*}(\zeta_{abC}+\zeta_{baC})\w\omega^C\w\omega^0\w\cdots\w\widehat{\omega^C}\w\cdots\w\omega^n\w\eta=0,\end{gather*}
and therefore, $\zeta_{abc}+\zeta_{bac}\equiv 0$ modulo $\Itot$. However, since $\zeta^a{}_{bc}\equiv\zeta^a{}_{cb}$, one obtains $\zeta^a{}_{bc}\equiv 0$ modulo $\Itot$.

Consequently, equation
\begin{gather} \label{eq:causal-Phi_ab-1}
 \Phi_{ab}=\half W_{abij} \omega^i\w\omega^j+E_{abci} \theta^c\w\omega^i+\half C_{abcd}\theta^c\w\theta^d+\lambda_{ab}\w\omega^0+\kappa_{ab}\w\omega^n,
\end{gather}
 is obtained where $\lambda_{ab}=-\lambda_{ba}$, $\kappa_{ab}=-\kappa_{ba}$ and $\lambda_{ab},\kappa_{ab}\equiv 0$ modulo~$\Itot$.

Differentiating \eqref{1st-causal-II}, and using \eqref{eq:causal-Gamma_a-Phi_n}, and \eqref{eq:causal-Phi_ab-1}, it is straightforward to find
\begin{gather} \label{eq:causal-d2-omega-a-curvatures-2}
\kappa_{ab}=0,\qquad \nu=0,\qquad \tau_{ab}=E_{0a c b} \theta^c +K_{ab} \pi_n -F_{ab}{}^c \pi_c,\qquad\lambda_{ab}=0,
 \end{gather}
for some $E_{0abc}$.

Equations \eqref{eq:causal-d2-omega-a-curvatures-2}, \eqref{eq:causal-Gamma_a-Phi_n}, \eqref{eq:causal-Phi_ab-1} and \eqref{eq:causal-Phi_0-1} can be combined to give
\begin{gather}
 \Phi_0=0,\nonumber\\
 \Phi_{ab}= \half W_{abij}\omega^i\w\omega^j+E_{abci}\theta^c\w\omega^i+\half C_{abcd}\theta^c\w\theta^d,\nonumber\\
 \Phi_n= W_{nnij}\omega^i\w\omega^j-\half \ell_a\theta^a\w\omega^0+\half k_{ab}\theta^b\w\omega^a,\nonumber\\
 \Gamma_a = \half W_{0aij}\omega^i\w\omega^j+E_{0ab i}\theta^b\w\omega^i +\half C_{abc}\theta^b\w\theta^c\nonumber\\
\hphantom{\Gamma_a =}{} -F_{abc} \gamma^b\w\theta^c -L_{a} \pi_n\w\omega^0 +K_{ab} \pi_n\w\omega^b-F_{ab}{}^{c} \pi_c\w\omega^b. \label{eq:causal-StrEqns-incomplete-1}
\end{gather}
The quantities $W_{ijkl}$, $E_{ijkl}$, $C_{abcd}$ and $C_{abc}$ are not expressed until the replacement~\eqref{eq:pi_0-pi_n-changed-1} is made which makes the computation slightly easier.

\subsubsection{Reduction of the prolonged group}\label{sec:reduct-prol-group}

The infinitesimal action of $G^{(1)}_1$ on $W_{nn0n}$ and $W_{nn0a}$ is given by
\begin{gather*}\exd W_{nn0n}+\half \sigma\equiv 0,\qquad \exd W_{nn0a}+\quar\tau_a\equiv 0\end{gather*}
modulo the ideal generated by the 1-forms $\{\omega^i,\theta_a,\phi_0,\phi_n,\phi^a{}_b,\gamma_a,\pi_n,\pi_a\}_{i=0,a=1}^{n,\ n-1}$.

Consequently, the normalizations $W_{nn0n}=0,W_{nn0a}=0$, reduce $G^{(1)}_1$ to the group $G^{(1)}_2$ containing the one parameter $\bfd$ in \eqref{eq:parametric-prolonged-group}, which corresponds to the 1-form $\pi_0$. As a result, the expression for $\Phi_n$ changes to
\begin{gather} \label{eq:phi-n-after-red-prol-gr}
 \Phi_n= W_{nnab} \omega^a\w\omega^b+2W_{nnan} \omega^a\w\omega^n-\half \ell_a \theta^a\w\omega^0+\half k_{ab} \theta^b\w\omega^a.
\end{gather}
One sees that the action of the parameter $\bfd$ on the torsion coefficients in the 2-forms $\Theta_0,\dots,\Phi_0$, $\dots,\Gamma_{n-1}$ a~priori cannot be used to reduce the prolonged structure group to the trivial group, unless certain non-vanishing conditions are assumed.

The only part left, is to see whether any reduction is possible via the torsion coefficients in $\Pi_n,\dots,\Pi_0$.

\subsubsection[An $\{e\}$-structure]{An $\boldsymbol{\{e\}}$-structure}\label{sec:an-e-structure}

In order to make computing $\Pi_0,\dots,\Pi_n$ easier the replacements
\begin{gather} \label{eq:pi_0-pi_n-changed-1}
 \pi_0\mapsto \pi_0+\half \ell_a \theta^a,\qquad\pi_a\mapsto \pi_a-\half k_{ab} \theta^b+ W_{nnab} \omega^b,\qquad \pi_n\mapsto \pi_n-2W_{nnan} \omega^a.
\end{gather}
 are imposed which result in
\begin{gather*}\Phi_0+\Phi_n=0.\end{gather*}
Similarly to Section~\ref{sec:second-order-struct}, by differentiating the 2-forms $\Gamma_a$, $\Phi^a{}_b$ in~\eqref{eq:causal-StrEqns-incomplete-1}, $\Phi_n$ in \eqref{eq:phi-n-after-red-prol-gr} and $\Phi_0=-\Phi_n$, and carrying out tedious but straightforward calculations, it follows that{\samepage
\begin{gather}
 \Pi_n=\half W_{nij}\omega^i\w\omega^j+E_{nai}\theta^a\w\omega^i+\sigma_{ni}\w\omega^i,\nonumber\\
 \Pi_a=\half W_{aij}\omega^i\w\omega^j+E_{abi}\theta^b\w\omega^i- F_{ab}{}^c\pi_c\w\theta^b +\sigma_{ai}\w\omega^i,\label{eq:causal-dpi_i-1}
\end{gather}
where $\sigma_{ab}=\sigma_{ba}$, $\sigma_{na}=\sigma_{an}$ are congruent to zero modulo $\Itot$.}

The expressions for the torsion coefficients are listed in~\eqref{eq:2nd-order-ess-torsions-01}. It turns out that without assuming certain non-vanishing conditions on the torsion coefficients, the infinitesimal action of the parameter $\mathbf{d}$ on the torsion terms does not reduce the prolonged group.

Since, the prolongation of the reduced group $G^{(1)}_2$ is trivial, it follows that the set of 1-forms \begin{gather*}\big\{\omega^i,\theta^a,\phi_0,\phi^a{}_b,\phi_n,\gamma_a,\pi_i\big\}_{i=0,\,a=1}^{n,\ n-1}\end{gather*} constitutes an $\{e\}$-structure for $\cC$.

\subsubsection{The third order structure equations}\label{sec:third-order-struct}
Similarly to \eqref{eq:curvatures1}, defining the 2-form
\begin{gather*} %\label{eq:Pi-0}
 \Pi_0=\exd\pi_0-2\phi_0\w\pi_0-\gamma^a\w\pi_a,
\end{gather*}
helps making the expressions more compact.

It is a matter of computation, similar to Section~\ref{sec:an-e-structure}, to find
\begin{gather*} %\label{eq:causal-dpi_0}
 \Pi_0= \half W_{0ij}\omega^i\w\omega^j+E_{0ai} \theta^a\w\omega^i- K_{ab} \pi^a\w\theta^b -L_a\pi_n\w\theta^a+\sigma_{0i}\w\omega^i,
\end{gather*}
where 1-forms $\sigma_{ij}$ defined in \eqref{eq:causal-dpi_i-1} satisfy $\sigma_{ij}=\sigma_{ji} $. Note that the symmetry of $\sigma_{ij}$ is a result of imposing $\Phi_0+\Phi_n=0$. Expressions for~$\sigma_{ij}$ are listed in~\eqref{eq:sigma-ij}.

\subsubsection{Full structure equations}\label{sec:full-struct-equat}

The full structure equations after replacement \eqref{eq:pi_0-pi_n-changed-1} using the 2-forms \eqref{eq:curvatures1} are expressed as follows
\begin{gather}
 \Omega^0 =0,\qquad
 \Omega^a =-F^a{}_{bc} \theta^b\w\omega^c-K^a{}_{b} \theta^b\w\omega^0,\qquad
 \Omega^n =-L_a \theta^a\w\omega^0,\nonumber\\
 \Theta_a =-f_{ ab c} \theta^b\w\omega^c-\half k_{ab}\theta^b\w\omega^0-W_{nnab}\omega^0\w\omega^b +\half W_{anbc} \omega^b\w\omega^c+ W_{bncn} \omega^c\w\omega^n,\nonumber\\
 \Phi_0 =-\half W_{nnab}\omega^a\w\omega^b+\half \ell_a\theta^a\w\omega^0-\quar k_{ab}\theta^b\w\omega^a,\nonumber\\
 \Phi_{ab} = \half W_{abij} \omega^i\w\omega^j+E_{abci} \theta^c\w\omega^i+\half C_{abcd} \theta^c\w\theta^d,\nonumber\\
 \Phi_n = \half W_{nnab} \omega^a\w\omega^b-\half \ell_a \theta^a\w\omega^0+\quar k_{ab} \theta^b\w\omega^a,\nonumber\\
 \Gamma_a = \half W_{0aij} \omega^i\w\omega^j+E_{0ab i} \theta^b\w\omega^i +\half C_{abc} \theta^b\w\theta^c\nonumber\\
\hphantom{\Gamma_a =}{} -F_{abc} \gamma^b\w\theta^c -L_{a} \pi_n\w\omega^0 +K_{ab} \pi_n\w\omega^b-F_{ab}{}^{c} \pi_c\w\omega^b,\nonumber\\
 \Pi_n =\half W_{nij} \omega^i\w\omega^j+E_{nai} \theta^a\w\omega^i+\sigma_{ni}\w\omega^i,\nonumber\\
 \Pi_a =\half W_{aij} \omega^i\w\omega^j+E_{abi} \theta^b\w\omega^i- F_{ab}{}^c \pi_c\w\theta^b +\sigma_{ai}\w\omega^i,\nonumber\\
 \Pi_0 = \half W_{0ij} \omega^i\w\omega^j+E_{0ai} \theta^a\w\omega^i- K_{ab} \pi^a\w\theta^b -L_a\pi_n\w\theta^a+\sigma_{0i}\w\omega^i,\label{eq:full-stru-equns-causal}
\end{gather}
The expressions of the quantities on the right hand side are given in~\eqref{eq:2nd-order-ess-torsions-01}, \eqref{eq:sigma-ij} and~\eqref{eq:E_iaj}. When differentiation~\eqref{1st-causal-II}, the vanishing of 3-forms $\theta^b\w\theta^c\w\omega^d$, $\theta^b\w\theta^c\w\omega^0$ combined with the expression of $C_{abcd}$, and $C_{abc}$ in~\eqref{eq:2nd-order-ess-torsions-01}, yield the relations
\begin{subequations} \label{eq:Bianchi-F_abc-K_ab-L_a}
 \begin{gather}
 F_{abc,\und}-F_{abd,\unc}=\half(K_{ac}\ve_{bd}+K_{bc}\ve_{ad}-K_{ad}\ve_{bc}-K_{bd}\ve_{ac}), \label{eq:raw-K_ab-F_abc,d}\\
 K_{ab,\unc}-K_{ac,\unb}=2(\ve_{ab}L_c-\ve_{ac}L_b)+F_{abd}K^d{}_c-F_{acd}K^d{}_b, \label{eq:raw-K_ab,c,L_a}
 \end{gather}
\end{subequations}
Moreover, the vanishing of the 3-forms $\theta_a\w\theta_b\w\omega^0$, in the exterior derivative of \eqref{1st-causal-III} implies $L_{a;\unb}=L_{b;\una}$. This equation together with the contraction of equations \eqref{eq:Bianchi-F_abc-K_ab-L_a} give the following Bianchi identities among the fiber invariants of~$\cC$
\begin{gather} \label{eq:causal-bianchi-K}
 K_{ab}=\textstyle{\frac{2}{n-1}} F^c{}_{ab;\bc}, \qquad %\label{eq:K_ab-F_abc,d-last}\\
 L_a=\textstyle{\frac{1}{2(n-2)}}\big(F_{abc}K^{bc}-K_{a}{}^b{}_{;\unb}\big), \qquad %\label{eq:L_a-K_abc-last}\\
 L_{a;\unb}=L_{b;\una}. %\label{eq:L_a-last}
 \end{gather}

\subsection{Quantities in (\ref{eq:full-stru-equns-causal})}\label{sec:tors-coeff-2nd-strEq-1}

The expression for the torsion coefficients are as follows
\begin{gather}
 E_{abc0}=\half \big( K_{bc;a}-K_{ac;b} -K_{ad}f^d{}_{bc}+ K_{bd}f^d{}_{ac}\big),\qquad E_{abcn}=0,\nonumber\\
 E_{abcd}= F_{bcd;a}-F_{acd;b} +F_{bde}f^e{}_{ca}-F_{ade}f^e{}_{cb},\nonumber \\
 C_{abcd}=F_{bce}F^e{}_{ad}-F_{ace}F^e{}_{bd}+\half\big( K_{bc}\ve_{ad} +K_{ad}\ve_{bc}- K_{ac}\ve_{bd} -K_{bd}\ve_{ac}\big),\nonumber\\
E_{0ab0}=-L_{b;a}-L_{d}f^d{}_{ab},\qquad E_{0abn}=\half k_{ab},\qquad C_{abc}=L_c\ve_{ab}-L_b\ve_{ac},\nonumber\\
E_{0abc}= \half \big(K_{ab;c} +K_{bc;a} +K_{ad}f^d{}_{bc}+K_{cd}f^d{}_{ab} \big)-F_{abc;0}-\half k_{bd}F^d{}_{ac}. \label{eq:2nd-order-ess-torsions-01}
\end{gather}

The quantities $W_{ijkl}=-W_{ijlk}$ satisfy the following identities
\begin{gather}
 W_{an0b}=-W_{nnab},\qquad W_{an0n}=0,\qquad W_{anbn}=W_{bnan},\qquad W{}^a{}_{nan}=0,\qquad W_{[a|n|bc]}=0,\nonumber\\
 W_{abcn}=W_{cnab}+F_{acd}W^d{}_{nbn}-F_{bcd}W^d{}_{nan},\qquad W_{a[bcd]}+F_{ae[b|}W_{en|cd]}=0,\nonumber\\
 W_{abcd}-W_{cdab}= F_{aec}W^e{}_{nbd} - F_{aed}W^e{}_{bnc} -F_{bec}W^e{}_{nad} +F_{bed}W^e{}_{nac},\nonumber\\
W_{ab0c}=-W_{0cab} +F_{acd}W^d{}_{n0b}-F_{bcd}W^d{}_{n0a}+\half K_{cd}W^d{}_{nba} +\half \big( K_{bd}W^d{}_{nca}-K_{ad}W^d{}_{ncb}\big),\nonumber\\
W_{0abn}=-W_{nnab}-K_{(a|d}W^d{}_{n|b)n},\qquad W_{ab0n}=-2W_{nnab}+K_{[a|d}W^d{}_{n|b]n},\nonumber\\
W_{0[abc]}=0,q\quad W_{0a0n}=L_cW^c{}_{nan},\qquad W_{0[a|0|b]}=\half L_cW^c{}_{nab},\qquad W_{0}{}^a{}_{0a}=0.\label{eq:2nd-order-W-ijkl}
\end{gather}
According to the symmetries above, the independent components of $W_{ijkl}$ are $W_{abcd}$, $ W_{anbc}$, $ W_{0abc}, $ $W_{nabn}, $ $W_{0a0b}$ and $W_{nnab}$. It is shown in Section \ref{sec:bianchi-identities} that $W_{ijkl}$ is generated by $W_{anbn}$.

The 1-forms $\sigma_{ij}=\sigma_{ji}$ are
\begin{gather}
 \sigma_{nn}=0,\qquad
 \sigma_{an}=W_{anbn}\gamma^b,\qquad
 \sigma_{a0}=3W_{nnab}\gamma^b-\ell_a\pi_n-\half k_{ab}\pi^b,\qquad
 \sigma_{0n}=0,\nonumber\\
 \sigma_{ab}=-W_{(a|n|b)c}\gamma^c+\half k_{ab}\pi_n-f_{abc}\pi^c,\label{eq:sigma-ij}\\
 \sigma_{00}=\ts\frac{1}{n-2}\big(2W_{0bba}+K^{cd}W_{dnca}-L^cW_{cnan}\big) \gamma^a -\ts\frac{2}{n-1} L^a{}_{;a}\pi_n-\ts\frac{1}{n-1}\big( K^{ac}{}_{;a}+K_{ab}f^{abc} \big)\pi_c.\nonumber
\end{gather}
The quantities $E_{iaj}=E_{jai}$ are given by
\begin{gather}
 E_{nan}=\ts\frac{2}{n-1}\ve^{bc}W_{0bca},\nonumber\\
 E_{nab}=3W_{nnab}+\half k_{ab;n}+K_{(a|d}W^d{}_{n|b)n},\nonumber\\
 E_{0ab}=-W_{0a0b}+2W_{nnbd}K^d{}_{a}-\ell_{a;b}-\ell_df^d{}_{ab}-\half k_{ab;0}-\quar k_{a}{}^ck_{bc},\nonumber\\
 E_{abc}=W_{(a|b0|a)}-W_{(a|n|c)d}K_{db}-W_{ancn}L_b+\half K_{(a|b;n|c)}-F_{abc;n0}\nonumber\\
 \hphantom{E_{abc}=}{} -\half F_{acd;n}K_{db;n} -\half K_{(a|d;n}F_{db|c);n}+\big(\half L_cW_{cnbn}+\half \ell_{b;n}\big)\ve_{ac},\nonumber\\
 E_{0an}=-\half L_cW_{cnan}-\half \ell_{a;n},\nonumber\\
 E_{0b0}=\ts\frac{1}{n-1}\big( {-}2 F_{abc}W_{0d0c}+W_{0aad}K_{db}-K^{ad}W_{nand}L_b+L_aE_{nba}-L_{a;d}f^{ad}{}_b \nonumber\\
 \hphantom{E_{0b0}=}{} -\big( f^a{}_{bc}L^c-\ve^{ca}L_{b;c}\big)_{;a}L^cf_{acd}f^{ad}{}_b-\half \big( K^a{}_{d;a}+K^{ae}f_{ade}\big) k^d{}_{b}\nonumber\\
\hphantom{E_{0b0}=}{} - \big( K^a{}_{b;a}+K^{ad}f_{adb}\big)_{;0}+L^dW_{nndb}\big)).\label{eq:E_iaj}
\end{gather}
The quantities $W_{ijk}$ satisfy
\begin{gather*}W_{ijk}+W_{kij}+W_{jki}=0,\qquad W_{ijk}=-W_{ikj}\end{gather*} and are obtained from $W_{ijkl}$, $F_{abc}$ as follows
\begin{gather*} %\label{eq:w-ijk-second-bianchies}
 W_{nan} =\ts-\frac{1}{n-2}(W_{anba;n}-W_{anbn;a}-F_{adb;n}W_{dnan}),\\
 W_{nab} =\ts 2W_{nnab}+ k_{[a|c}W_{cn|b]n},\\
 W_{n0n} =\ts\frac{1}{(n-2)(n-1)}(W_{cdcd;n}-E_{cdec}W_{dnen}-E_{cded}W_{encn}-W_{adan;d}+W_{addn;a}) \\
\hphantom{W_{n0n} =}{} +\ts\frac{1}{2(n-1)}k_{cd}W_{ancn},\\
 W_{abn} =\ve_{ab}W_{n0n}+W_{anbn;0}-W_{nnab;n}-\half k_{ac}W_{cnbn},\\
 W_{a0n} =\ts \frac{1}{n-2}\big({-}W_{nnba;b}-W_{bnba;0} + \half k_{ed} W_{dnea} +W_{0dda;n} -W_{0ddn;a}+W_{0dan;d} \\
\hphantom{W_{a0n} =}{}-E_{eda}W_{dnen}+E_{ede}W_{dnan} +E_{0edn}W_{dnea} -K_{ad}W_{ndn}+F_{ade}W_{den} \big),\\
 W_{n0a} =\half W_{a0n}-\half \ell_bW_{bnan},\\
 W_{abc} =\ts \ve_{ab}W_{n0c}-\ve_{ac}W_{n0b}-W_{nnab;c}+W_{nnac;b}+W_{anbc;0}\\
\hphantom{W_{abc} =}{} -\half K_{ad}W_{dnbc}+ f_{acd}W_{nndb}-f_{abd}W_{nndc},\\
 W_{0ab} =\ts -2W_{nnab;0}+\ell_cW_{cnab}-k_{[a|d}W_{nnc|b},\\
 W_{00n} =\ts\frac{1}{(n-1)(n-2)}\big( W_{ab0[b;a]}-\half W_{abab;0}+ \half E_{abe0} W_{enba} + \half E_{abe0}W_{nnea} \big)\\
\hphantom{W_{00n} =}{} +\ts\frac{1}{n-1}\big( E_{0adn}W_{nnda}-E_{0ad0}W_{dnad}+L_aW_{nan}\big),\\
 W_{a0b} =\ve_{ab}W_{00n}+W_{0a0b;n}+W_{0abn;0}-E_{0adn}W_{nndb} \\
\hphantom{W_{a0b} =}{} +E_{0ad0}W_{dnbn}-L_aW_{nbn}-K_{ab}W_{n0n} +F_{abc}W_{c0n},\\
 W_{00b} =-W_{0aab;0}+W_{0a0b;a}+E_{0ab0}W_{dba}+E_{0ada}W_{d0b}\\
\hphantom{W_{00b} =}{} -E_{0adb}W_{d0a} -L_a W_{nba} -K_{ab}W_{n0a}+F_{abd}W_{d0a}.
\end{gather*}
In the above expressions the indices are lowered using $\ve^{ab}$ and the summation is taken over repeated indices.

\subsection[Bianchi identities for $W_{ijkl}$]{Bianchi identities for $\boldsymbol{W_{ijkl}}$}\label{sec:bianchi-identities}
The following relations hold among the torsion elements $W_{ijkl}$ in \eqref{eq:full-stru-equns-causal}.
\begin{gather}
 W_{anbc}=\ts{\frac{2}{3}}\big(W_{an[b|n;|\unc]}+F_{a[c}{}^dW_{b]ndn}\big) +\ts\frac{4}{3(n+2)}\big(\ve_{a[c|}W^{d}{}_{n|b]n;\und} +\ve_{a[b}F^{de}{}_{c]}W_{dnen}\big),\nonumber\\
 W_{nnab} =\ts\frac{1}{2(n+3)}\big({-}W^d{}_{nab;\und}+F^{cd}{}_{a}W_{cndb}-F^{cd}{}_{b}W_{cnda}\big),\nonumber\\
 W_{abcd} =W_{[a|ncd;|\unb]}-f^{e}{}_{ac}f_{bde}+f^e{}_{ad}f_{bce}+W_{ane[c}F_{b]d}{}^e -W_{dne[c}F_{b]a}{}^e\nonumber\\
\hphantom{W_{abcd} =}{} -\ve_{a[c|}\big(W_{nn|d]b}-\half k_{|d]b;n}+W_{d]nen}K_{b}{}^e\big)\nonumber\\
\hphantom{W_{abcd} =}{} +\ve_{b[c}\big(W_{nnd]a}-\half k_{|d]a;n}+W_{d]nen}K_{a}{}^e\big),\nonumber\\
 W_{0abc}=2W_{nnbc;\una}-4W_{nnd[c}F_{b]a}{}^d+ k_{a[b;c]}+ f^d{}_{a[c}k_{b]d},\nonumber\\
 W_{0a0b}=\ts-\frac{1}{n-2}W_{0abc;\und}\ve^{cd} -\frac{1}{n-2}W_{0ecb}F^{ec}{}_{a}-\ve^{cd}E_{0cd}\ve_{ab}+E_{0ab} +\ve^{cd}(E_{0acd;b}-E_{0acb;d})\nonumber\\
\hphantom{W_{0a0b}=}{} +E_{0aec}f^{ec}{}_b-F_{abe}E^{ec}{}_{c}+F_{aec}E^{ec}{}_{b}
 +L^cW_{nbca}+K_{ab}E_{nc}{}^c-K_{a}{}^dE_{ndb}.\label{eq:2nd-order-biachies}
\end{gather}
It follows from the relations above that the infinitesimal action of the structure group on $W_{ijkl}$ is obtained from the appropriate chains of coframe derivatives of $ W_{anbn}$.
\end{appendix}

\subsection*{Acknowledgments}
This work was carried out under the guidance of Professor N.~Kamran, whom I would like to thank greatly for his clear explanation of Cartan's method of equivalence and introducing me to the work of Holland and Sparling~\cite{HS2010causal}. His encouragement, help and support during my PhD made this work possible. I am also very much indebted to Dennis The for the many helpful conversations that we had and for patiently explaining some aspects of parabolic geometry. I~thank the anonymous referees for their corrections and suggestions. The funding for this work was provided by McGill University.

\pdfbookmark[1]{References}{ref}
\LastPageEnding

\end{document}